\newcommand{\1}{\mathds{1}}
\newcommand{\F}{\mathcal F}
\renewcommand{\H}{\mathcal H}
\newcommand{\J}{\mathcal J}
\renewcommand{\L}{\mathcal L}
\renewcommand{\S}{\mathcal S}
\newcommand{\T}{\mathcal T}
\newcommand{\U}{\mathcal U}
\newcommand{\V}{\mathcal V}
\newcommand{\IC}{\mathbb C}
\newcommand{\IR}{\mathbb R}
\newcommand{\m}{\mathfrak m}
\newcommand{\n}{\mathfrak n}
\newcommand{\dom}{\operatorname{dom}}
\newcommand{\id}{\mathrm{id}}
\newcommand{\op}{\mathrm{op}}
\newcommand{\abs}[1]{\lvert#1\rvert}
\newcommand{\norm}[1]{\lVert#1\rVert}
\renewcommand{\epsilon}{\varepsilon}
\renewcommand{\phi}{\varphi}
\newtheorem{proposition}{Proposition}[section]
\newtheorem{lemma}[proposition]{Lemma}
\newtheorem{theorem}[proposition]{Theorem}
\newtheorem{corollary}[proposition]{Corollary}
\theoremstyle{definition}
\newtheorem{definition}[proposition]{Definition}
\theoremstyle{remark}
\newtheorem{example}[proposition]{Example}
\newtheorem{remark}[proposition]{Remark}
\title{Operator-Valued Twisted Araki--Woods Algebras}
\author{Rahul Kumar R}
\address{(R.K.) Department of Mathematics and Statistics, IIT Kanpur, Kalyanpur - 208016, Uttar Pradesh, India}
\email{rahulkumarr35@gmail.com}
\author{Melchior Wirth}
\address{(M. W.) Institute of Science and Technology Austria (ISTA), Am Campus 1, 3400 Klosterneuburg, Austria}
\email{melchior.wirth@ist.ac.at}
\begin{document}
\begin{abstract}
We introduce operator-valued twisted Araki--Woods algebras. These are operator-valued versions of a class of second quantization algebras that includes $q$-Gaussian and $q$-Araki--Woods algebras and also generalize Shlyakhtenko's von Neumann algebras generated by operator-valued semicircular variables. We develop a disintegration theory that reduces the isomorphism type of operator-valued twisted Araki--Woods algebras over type I factors to the scalar-valued case. Moreover, these algebras come with a natural weight, and we characterize its modular theory. We also give sufficient criteria that guarantee that factoriality of these algebras.
\end{abstract}
\maketitle

\section{Introduction}
Second quantization von Neumann algebras or Gaussian type von Neumann algebras are algebras generated by field operators on various versions of Fock spaces. They arise naturally both from a purely mathematical and a physics viewpoint and play an important role in the study of operator algebras. Perhaps the two most prominent second quantization von Neumann algebras are the von Neumann algebras generated by CCR and CAR relations, which provide the mathematical framework for Bosonic and Fermionic interactions.

There are interesting generalizations to these classical constructions in different directions. First to mention are the algebras of field operators on \emph{full} Fock space introduced as a model of free semicircular variables by Voiculescu. These are at the heart of free probability theory (see \cite{VDN92}) and generate von Neumann algebras isomorphic to the free group factors.

In the nineties, M. Bo\.{z}ejko and R. Speicher in \cite{BS94} significantly generalized CCR/CAR algebras by allowing for a much more general type of `symmetry' encoded in a `twist' operator satisfying the Yang--Baxter equation. An important special case are the $q$-Gaussian von Neumann algebras, for which the twist is a scalar multiple $q$ of the tensor flip. Voiculescu's von Neumann algebras generated by free semicircular variables correspond to the case $q=0$ in ths setting.


Fundamental properties like factoriality and amenability of the $q$-Gaussian algebras and more generally von Neumann algebras generated by field operators on a twisted Fock space have been studied extensively in the last two decades. Through the combined efforts of several groups it is now known that these algebras are non-amenable type $\mathrm{II}_1$ factors (see \cite{BKS97, Nou04, Ric05, Kro06, SW18, BKMS21, MS23, Yan23}).


Shlyakhtenko introduced a further generalization of Voiculescu's construction in \cite{Shl97} by incorporating a one-parameter unitary group but keeping the full Fock space. These algebras are the non-tracial counterparts of free group factors and are known as free Araki--Woods factors. They are non-amenable full factors of type $\mathrm{III}$ whenever the associated one-parameter group of unitaries is non-trivial. Free Araki-Woods factors possess many interesting properties: For example, they lack Cartan sub-algebras, have the complete metric approximation property \cite{HR11}, and are strongly solid \cite{BHV18}.

In \cite{Hia03}, Hiai introduced a $q$-deformed version of free Araki--Woods factors, which are non-tracial analogs of the $q$-Gaussian algebras. As in the tracial case, properties like factoriality, non-amenability, etc. are harder to prove for $q\neq 0$ because the powerful tools from free probability do not immediately apply. After a significant number of partial results \cite{Nou04, BM17, SW18, BMRW23}, a complete solution of the factoriality problem was obtained very recently by Kumar, Skalski and Wasilewski \cite{KSW23}.

A further generalization of Hiai's $q$-deformed Araki--Woods algebras, namely mixed $q$-deformed Araki-Woods von Neumann algebras involving multi-scalar symmetry, was introduced in \cite{BKM23} and studied in \cite{Kum23,BKM24}.

In terms of a number of structural properties like factoriality and amenability, these deformed algebras behave either like free group factors or free Araki--Woods factors. However, not much is known about the dependence of the isomorphism class of these second quantization von Neumann algebras on the value of $q$ (or more generally the twist).  On the positive side, it was shown by Guionnet and Shlyakhtenko \cite{GS14} using free monotone transport that for finite-dimensional Hilbert spaces and small $q$ (depending on the dimension of the underlying Hilbert space), the $q$-Gaussian algebras are isomorphic to free group factors. This result was generalized to the non-tracial case of $q$-Araki--Woods factors by \cite{Nel15}.

The range of $q$ for which this result is valid degenerates to $\{0\}$ as the dimension of the underlying Hilbert space goes to infinity, and indeed it was proved by Caspers \cite{Cas23} that infinitely generated $q$-Gaussian von Neumann algebras for $q\neq 0$ do not have Akemann-Ostrand property and hence are not isomorphic to free group factors.

The dependence of the isomorphism type on the dimension of the underlying Hilbert space is even harder -- already for $q=0$ and finite-dimensional Hilbert spaces, it reduces to the famous free group factor isomorphism problem. 

Motivated by applications to quantum field theory, Correa da Silva and Lechner \cite{CL23} (see also \cite{Lec23} for a summary) introduced a general framework for both the tracial and non-tracial second quantization algebras mentioned in the previous paragraphs. Given a Hilbert space $H$, a standard subspace $K\subset H$ in the sense of modular theory, and a bounded self-adjoint Yang-Baxter operator $T$ on $H\otimes H$ satisfying certain positivity condition, they constructed the $T$-twisted Fock space $\mathcal{F}_T(H)$ and von Neumann algebras $\Gamma_T(K\subset H)$, which they call  \emph{twisted Araki--Woods algebras}. These algebras are generated by field operators on $\mathcal F_T(H)$ associated with vectors in $K\subset H$.

Correa da Silva and Lechner showed that if the twist $T$ is compatible with the standard subspace $K\subset H$ in a certain sense and satisfies a condition called crossing symmetry, the vacuum vector in $\mathcal F_T(H)$ is cyclic and separating for $\Gamma_T(K\subset H)$. Depending on the choice of the standard subspace $K\subset H$ and the twist $T$, the twisted Araki--Woods algebras $\Gamma_T(K\subset H)$ take various forms and in particular generalize all the algebras mentioned earlier.  It was proved by Yang \cite{Yan23} that $\Gamma_T(K\subset H)$ is a non-amenable factor when $2\leq\dim(H)<\infty$ and in addition is isomorphic to a free Araki-Woods factor for $\norm{T}$ small (depending on the dimension of $H$).

In this article we introduce operator-valued versions of these twisted Araki--Woods algebras. Operator-valued (free) second quantization algebras were first considered by Shlyakhtenko in his work on operator-valued probability when he studied the von Neumann algebras generated by operator-valued semicircular variables \cite{Shl99} and have found applications in the study of generators of GNS-symmetric quantum Markov semigroups \cite{JRS,Wir22b}. Operator-valued versions of twisted Fock spaces and specifically Bosonic Fock spaces have occurred in the mathematical physics literature in relation to quantum field theory \cite{Ske98,Vas24}.

In our operator-valued setting, the underlying Hilbert space is replaced by a Hilbert bimodule (or correspondence) over a base von Neumann algebra. While it is not obvious how to define a bimodule version of standard subspaces, the equivalent formulation in terms of a one-parameter unitary group and an anti-unitary involution, which was used in Shlyakhtenko's definition of free Araki--Woods algebras, has a bimodule analog in the so-called Tomita correspondences. Tomita correspondences were recently introduced by the second-named author in the study of generators of GNS-symmetric quantum Markov semigroups. In contrast to Shlyakhtenko's von Neumann algebras generated by operator-valued semicircular variables, our approach is thus basis-free. Both approaches are equivalent in the case when the twist is zero and the canonical conditional expectation defined by Shlyakhtenko in this setting is faithful, as we show in this article.

Another difficulty in the operator-valued setting arises if one wants to define analogs of the $q$-Gaussian or $q$-Araki--Woods algebras. The definition of the $q$-deformed tensor product in the scalar-valued case relies on the existence of the tensor flip $\xi\otimes\eta\mapsto \eta\otimes \xi$ on $H\otimes H$ or more generally the action of $S_n$ on $H^{\otimes n}$ that permutes the tensor factors. In the operator-valued case, when the tensor product of Hilbert spaces is replaced by the relative tensor product of correspondences, the naive definition of the tensor flip does not necessarily result in a bounded map, let alone a bimodule map. This difficulty in defining Bosonic or Fermionic Fock bimodules was already noted in the mathematical physics literature \cite{Ske98,Vas24}. For this reason, we work in the framework of general twists, for which the structural requirements in the operator-valued case are transparent. In specific examples, we show that there are still natural operator-valued versions of $q$-Gaussian and $q$-Araki--Woods algebras.

Let us give some more details of our definition of operator-valued twisted Araki--Woods algebras. If $(M,\varphi)$ is a pair consisting of a von Neumann algebra $M$ and a normal semi-finite faithful weight $\phi$ on $M$, a Tomita correspondence over $(M,\varphi)$ is a triple $(\H,\J,(\U_t))$ consisting of a correspondence $\H$ from $M$ to itself together with an anti-unitary involution $\J$ and a strongly continuous one-parameter group of unitaries $(\U_t)$ on $\H$ satisfying some compatibility conditions (see Definition \ref{Def:Tomita correspondence}). Given a Tomita correspondence $(\H,\J,(\U_t))$ over $(M,\varphi)$ and a contractive bimodule map $\T$ on $\H\otimes_\varphi \H$ that satisfies certain positivity condition similar to the ones in \cite{CL23}, we define a twisted inner product on the relative tensor powers $\H^{\otimes_\phi n}$ to construct the twisted Fock bimodule $\F_\T(\H)$. We define the operator-valued twisted Araki--Woods algebra $\Gamma_\T(\H,\J,(\U_t))$ to be the von Neumann algebra generated by the left action of $M$ on $\F_\T(\H)$ and the  field operators $s_\T(\xi):=a_{\T}^{*}(\xi)+a_{\T}(\xi)$ for $\xi$ varying over an appropriate real subspace of $\H$ that depends on $\J$ and $(\U_t)$. The inclusion $M\subset \Gamma_\T(\H,\J,(\U_t))$ comes with a natural faithful normal conditional expectation so that $\varphi$ induces a faithful normal semifinite weight $\hat\phi_\T$ on $\Gamma_\T(\H,\J,(\U_t))$. In the case when $M=\mathbb{C}1$, the algebras $\Gamma_\T(\H,\J,(\U_t))$ reduces to the (scalar-valued) twisted Araki--Woods algebra considered in \cite{CL23}.

It seems more difficult to develop a comprehensive structure theory for the operator-valued twisted Araki--Woods algebras compared to the scalar-valued case for at least two reasons: For one, the structure of the base algebra $M$ comes into play. As a simple observation, when the Tomita correspondence is zero, then the operator-valued twisted Araki--Woods algebra coincides with the base algebra. But even for among non-trivial Tomita correspondences, there are examples for which $\Gamma_\T(\H,\J,(\U_t))\cong M\overline\otimes \Gamma_T(H,J,(U_t))$. As a second reason, while the isomorphism type of a Hilbert space is determined by its dimension, the correspondences over a given von Neumann algebra have a much richer structure. As an example, if one wants to study conjugate variables for the field operators in $\Gamma_\T(\H,\J,(\U_t))$ relative to the base algebra $M$, it follows from Shlyakhtenko's work \cite{Shl00} that the codomain of the derivations should be chosen depending on structure of $\H$.

Our main contributions apart from the definition of the operator-valued twisted Araki--Woods algebras are the following. First, we develop a disintegration theory of Tomita correspondences over semi-finite von Neumann algebras. In the special case of type I factors, this leads to a complete characterization of Tomita correspondences and a characterization of operator-valued twisted Araki--Woods algebras as tensor products of the base algebra with scalar-valued twisted Araki--Woods algebras. In particular, this clarifies the structure of Shlyakhtenko's von Neumann algebras generated by $M$-valued semicircular variables if $M$ is a type I factor. Second, we characterize the modular theory of $\hat\phi_\T$ in terms of the underlying Tomita correspondence. Third, we give sufficient criteria under which the operator-valued twisted Araki--Woods algebras are factors. Furthermore, we also show that several natural von Neumann algebraic constructions fall into the class of operator-valued twisted Araki--Woods algebras.

\subsection*{Outline of the article} In Section \ref{sec:prelim} we gather some preliminary material on weight theory and bimodules over von Neumann algebras. In Section \ref{sec:disintegration_Tomita_corr} we develop a disintegration theory for Tomita correspondence over semi-finite von Neumann algebras, culminating in a complete description of separable Tomita correspondences over type I factors (\Cref{thm:Tomita_corr_type_I}). In Section \ref{sec:twists} we introduce the class of twists we use to define the operator-valued twisted Araki--Woods algebras and discuss several classes of examples. In Section \ref{sec:def_and_mod_theory} we define creation and annihilation operators on the twisted Fock bimodule and introduce the operator-valued twisted Araki--Woods algebra associated with a Tomita correspondence. We further study a natural weight on the operator-valued twisted Araki--Woods algebras and characterize the associated modular conjugation and modular operator (\Cref{thm:mod_theory_twisted}). In Section 6 we discuss three classes of examples for which the operator-valued twisted Araki--Woods algebras coincide with known von Neumann algebraic constructions. In particular, we obtain a complete description of the von Neumann algebras generated by Shlyakhtenko's operator-valued semicircular variables in the case when the base algebra is a type I factor (\Cref{thm:twisted_Araki-Woods_type_I}, \Cref{cor:op-valued_semicircular}). In Section \ref{sec:factoriality} we give two sufficient criteria for the factoriality of operator-valued twisted Araki--Woods algebras (\Cref{prop:mixing_factor}, \Cref{cor:infinite-dim_factor}). Finally, in Section \ref{sec:applications} we discuss some applications to generators of GNS-symmetric quantum Markov semigroups.

\subsection*{Acknowledgments} The authors want to thank the organizers of YMC*A 2023 in Leuven, where this collaboration was conceived. They are grateful to Dan Voiculescu for a valuable historical remark and to Zhiyuan Yang for raising the question if operator-valued weights give rise to Tomita correspondences. M. W. was funded by the Austrian Science Fund (FWF) under the Esprit Programme [ESP 156]. For the purpose of Open Access, the authors have
applied a CC BY public copyright licence to any Author Accepted Manuscript (AAM)
version arising from this submission.

\section{Preliminaries}\label{sec:prelim}

\subsection{Weights on Von Neumann Algebras}

In this section we briefly recall some basic facts and fix the notation regarding weights, modular theory and several classes of bimodules over von Neumann algebras. Except for Subsection \ref{sec:Tomita_prelim}, this material is standard, and we refer to \cite{Tak03} and \cite{Ske01} for more detailed accounts. In the last subsection we recall the definition of Tomita corresponces, which have recently been introduced by the second-named author \cite{Wir22b}.

Let $M$ be a von Neumann algebra. A \emph{weight} on $M$ is a map $\phi\colon M_+\to [0,\infty]$ such that $\phi(x+y)=\phi(x)+\phi(y)$ and $\phi(\lambda x)=\lambda\phi(x)$ for all $x,y\in M_+$ and $\lambda\geq 0$. Here we use the usual convention $0\cdot\infty=0$.

For a weight $\phi$ we define
\begin{align*}
\mathfrak p_\phi&=\{x\in M_+\mid \phi(x)<\infty\},\\
\n_\phi&=\{x\in  M\mid x^\ast x\in \mathfrak p_\phi\},\\
\m_\phi&=\operatorname{lin}\{x^\ast y\mid x,y\in\n_\phi\}.
\end{align*}
The weight $\phi$ is called \emph{normal} if $\sup_j \phi(x_j)=\phi(\sup_j x_j)$ for every bounded increasing net $(x_j)$ in $M_+$, \emph{semi-finite} if $\mathfrak p_\phi$ generates $M$ as a von Neumann algebra, and \emph{faithful} if $\phi(x^\ast x)=0$ implies $x=0$.

Every element of $\m_\phi$ is a linear combination of four elements of $\mathfrak p_\phi$, and $\phi$ can be linearly extended to $\m_\phi$. This extension will still be denoted by $\phi$.

A semi-cyclic representation of $M$ is a triple $(\pi,H,\Lambda)$ consisting of a normal representation of $M$ on $H$ and a $\sigma$-strong$^\ast$ closed linear map $\Lambda$ from a dense left ideal $\n$ of $M$ into $H$ with dense range such that
\begin{align*}
\pi(x)\Lambda(y)&=\Lambda(xy)
\end{align*}
for all $x\in M$ and $y\in\n$. We call $\n$ the \emph{definition ideal} of the semi-cyclic representation $(\pi,H,\Lambda)$.

A normal semi-finite weight $\phi$ on $M$ gives rise to a semi-cyclic representation $(\pi_\phi,L_2(M,\phi),\Lambda_\phi)$ as follows: $L_2(M,\phi)$ is the Hilbert space obtained from $\n_\phi$ after separation and completion with respect to the inner product 
\begin{equation*}
\langle\,\cdot\,,\cdot\,\rangle_\phi\colon\n_\phi\times\n_\phi\to\IC,\,(x,y)\mapsto\phi(x^\ast y),
\end{equation*}
$\Lambda_\phi\colon \n_\phi\to L_2(M,\phi)$ is the quotient map and $\pi_\phi$ is given by $\pi_\phi(x)\Lambda_\phi(y)=\Lambda_\phi(xy)$.

This semi-cyclic representation is essentially uniquely determined by $\phi$ in the following sense: If $(\pi,H,\Lambda)$ is another semi-cyclic representation of $\phi$ with definition ideal $\n_\phi$ and $\langle \Lambda(x),\Lambda(y)\rangle=\phi(x^\ast y)$ for all $x,y\in\n_\phi$, then there exists a unitary $U\colon L_2(M,\phi)\to H$ such that $U\Lambda_\phi=\Lambda$ and $U\pi_\phi(x)U^\ast=\pi(x)$ for all $x\in M$.

The operator 
\begin{equation*}
    \Lambda_\phi(\n_\phi\cap \n_\phi^\ast)\to L^2(M,\phi),\,\Lambda_\phi(x)\mapsto \Lambda_\phi(x^\ast)
\end{equation*}
is an anti-linear closable operator. Let $S_\phi$ denote its closure and $S_\phi=J_\phi\Delta_\phi^{1/2}$ the polar decomposition of $S_\phi$. The operator $J_\phi$, called the \emph{modular conjugation}, is an anti-unitary involution and $\Delta_\phi$, called the \emph{modular operator}, is a positive non-singular self-adjoint operator. We define the right GNS embedding
\begin{equation*}
    \Lambda_\phi^\prime\colon \n_\phi^\ast\to L^2(M,\phi),\,x\mapsto J_\phi\Lambda_\phi(x^\ast)
\end{equation*}
and the right action 
\begin{equation*}
    \pi_\phi^\prime\colon M^{\mathrm{op}}\to B(L^2(M,\phi)),\,x^{\mathrm{op}}\mapsto J_\phi\pi_\phi(x)^\ast J_\phi.
\end{equation*}

The \emph{modular group} associated with $\phi$ is the point-weak$^\ast$ continuous automorphism group $\sigma^\phi$ on $M$ given by $\sigma^\phi_t(x)=\pi_\phi^{-1}(\Delta_\phi^{it}\pi_\phi(x)\Delta_\phi^{-it})$.

\subsection{Von Neumann Bimodules}

Let $A$ be a unital $C^\ast$-algebra. A (right) pre-$C^\ast$-module over $A$ is a right $A$-module $F$ with a sesquilinear map $\langle\cdot|\cdot\rangle_A\colon F\times F\to A$ such that
\begin{itemize}
\item $\langle\xi|\eta\rangle_A x=\langle\xi|\eta x\rangle_A$ for all $\xi,\eta\in F$, $x\in A$,
\item $\langle\xi|\eta\rangle_A ={\langle\eta|\xi \rangle_A}^*$ for all $\xi,\eta\in F$,
\item $\langle\xi|\xi\rangle_A\geq 0$ for all $\xi\in F$,
\item $\langle\xi|\xi\rangle_A=0$ if and only if $\xi=0$.
\end{itemize}
A $C^\ast$-module is a pre-$C^\ast$-module that is complete in the norm $\norm{\langle\cdot|\cdot\rangle_A}^{1/2}$. 

A bounded linear operator $T$ on a $C^\ast$-module $F$ is called \emph{adjointable} if there exists a bounded linear operator $T^\ast$ on $F$ such that
\begin{equation*}
\langle T\xi|\eta\rangle_A=\langle\xi|T^\ast\eta\rangle_A
\end{equation*}
for all $\xi,\eta\in F$. Note that all adjointable operators are right module maps, that is, $T(\xi a)=(T\xi)a$ for all $a\in A$, $\xi\in F$.

Let $A$, $B$ and $C$ be unital $C^\ast$-algebras. A \emph{$C^\ast$ $A$-$B$-module} is a $C^\ast$-module $F$ over $B$ together with a unital $\ast$-homomorphism $\pi$ from $A$ to the adjointable operators on $F$. We simply write $a\xi$ for $\pi(a)\xi$.

In particular, a $C^\ast$ $\IC$-$A$-bimodule is the same as a $C^\ast$ $A$-module and a $C^\ast$ $A$-$\IC$-bimodule the same as a representation of $A$ on a Hilbert space. In the case $A=B$ we simply speak of $C^\ast$ $A$-bimodules.

The \emph{tensor product} $F\overline\odot G$ of a $C^\ast$ $A$-$B$-module $F$ and a $C^\ast$ $B$-$C$-module $G$ is the $C^\ast$ $A$-$C$-module obtained from the algebraic tensor product $F\odot G$ after separation and completion with respect to the $C$-valued inner product given by
\begin{equation*}
\langle\xi\otimes\eta|\xi'\otimes\eta'\rangle_C=\langle\eta|\langle\xi|\xi'\rangle_B\eta'\rangle_C
\end{equation*}
and the actions given by
\begin{equation*}
a(\xi\otimes\eta)=a\xi\otimes \eta,\,(\xi\otimes\eta)c=\xi\otimes\eta c.
\end{equation*}
If $A$ is a $C^\ast$-algebra of bounded operators on the Hilbert space $H$ and $F$ is a $C^\ast$ $A$-module, we can embed $F$ into $B(H,F\overline\odot H)$ by the action 
\begin{equation*}
H\to F\overline\odot H,\,\zeta\mapsto \xi\otimes\zeta
\end{equation*}
for $\xi\in F$. If we refer to the strong operator topology on a $C^\ast$-module in the following, we always mean the strong operator topology in this embedding. If $A$ is von Neumann algebra, we always assume that $A$ is represented on $H$ in standard form.

Let $M$ be a von Neumann algebra on $H$. A \emph{von Neumann $M$-module} is a $C^\ast$ $M$-module $F$ that is strongly closed in $B(H,F\overline\odot H)$. Several equivalent definitions of von Neumann modules have been given in \cite[Proposition 2.9]{Sch02}. In particular, a $C^\ast$-module over a von Neumann algebra is a von Neumann module if and only if it is isometrically isomorphic to a dual space and the right action is weak$^\ast$ continuous. The adjointable operators on a von Neumann $M$-module form a von Neumann algebra $\L_M(F)$.

If $N$ is another von Neumann algebra on $K$, then a $C^\ast$ $M$-$N$-module is a \emph{von Neumann $M$-$N$-module} if it is a von Neumann $N$-module and the left action of $M$ on $F\overline\odot K$ is normal.

\subsection{Correspondences}\label{sec:correspondences}

Let $M$ and $N$ be von Neumann algebras. A \emph{correspondence} from $N$ to $M$ is a Hilbert space $\mathcal H$ together with normal representations $\pi_l^{\H}$ of $M$ and $\pi_r^{\H}$ of $N^{\op}$ on $\H$ such that $\pi_l^\H(M)$ and $\pi_r^{\H}(N^\op)$ commute. With these representations, $\H$ becomes an $M$-$N$-bimodule, and we simply write $x\xi y$ for $\pi_l^\H(x)\pi_r^\H(y^\op)\xi$.

The category of von Neumann $M$-$N$-modules and correspondences from $N$ to $M$ is equivalent in the following sense (see \cite[Section 1.1]{AD90} and \cite[Section 3]{Wir24a} in the $\sigma$-finite case): If $\H$ is a correspondence from $N$ to $M$, then the space $\L(L^2(N)_N,\H_N)$ of all bounded right module maps from $L^2(N)$ to $\H$ becomes a von Neumann $M$-$N$-module when endowed with the actions $x\cdot T\cdot y=\pi_l^\H(x)Ty$ and the $N$-valued inner product $\langle S\vert T\rangle_N=S^\ast T$. The strong operator topology on $\L(L^2(N)_N,\H_N)$ in the sense of the previous subsection coincides with the restriction of the strong operator topology on $B(L^2(N),\H)$.

Conversely, if $F$ is a von Neumann $M$-$N$-module, then the von Neumann $M$-$\IC$-module $F\overline\odot L^2(N)$ becomes a correspondence from $N$ to $M$ when endowed with the right action of $N$ on the tensor factor $L^2(N)$. These two operations are mutually inverse up to unitary equivalence.

If $\phi$ is a normal semi-finite faithful weight on $M$ and $\psi$ is a normal semi-finite faithful weight on $N$, a vector $\xi\in \H$ is called \emph{left $\psi$-bounded} if the map 
\begin{equation*}
    \Lambda_\psi^\prime(\n_\psi\cap \n_\psi^\ast)\to \H,\, \Lambda_\psi^\prime(y)\mapsto \xi y
\end{equation*}
extends to a bounded linear operator $L_\psi(\xi)$ on $L^2(N)$. We write $L^\infty(\H_N,\psi)$ for the set of all left $\phi$-bounded vector in $\H$.

Likewise, a vector $\xi\in\H$ is called \emph{right $\phi$-bounded} if the map
\begin{equation*}
    \Lambda_\phi(\n_\phi\cap\n_\phi^\ast)\to \H,\,\Lambda_\phi(x)\mapsto x\xi
\end{equation*}
extends to a bounded linear operator $R_\phi(\xi)$ on $L^2(M)$. We write $L^\infty(_M\H,\phi)$ for the set of all right $\phi$-bounded vectors in $\H$.

If $\xi\in \H$ is both left $\psi$-bounded and right $\phi$-bounded, it is called $(\phi,\psi)$-bounded. We write $L^\infty(_M\H_N,\phi,\psi)$ for the set of all $(\phi,\psi)$-bounded vectors in $\H$. If the weights $\phi$ and $\psi$ are clear from the context, we simply talk about left-bounded, right-bounded and bounded vectors.

If $\xi\in \H$ is left $\psi$-bounded, then $L_\psi(\xi)$ is a bounded right module map. In fact, right module maps of this form are strongly dense in the set of all bounded right module maps, as the next lemma shows.

\begin{lemma}
The intersection of $\{L_\psi(\xi)\mid \xi\in L^\infty(\H_N,\psi)\}$ with the unit ball of $\L(L^2(N)_N,\H_N)$ is dense in the unit ball of $\L(L^2(N)_N,\H_N)\}$ with respect to the strong operator topology. Moreover, if $\psi$ is finite, then $\{L_\psi(\xi)\mid \xi\in L^\infty(\H_N,\psi)\}=\L(L^2(N)_N,\H_N)$.
\end{lemma}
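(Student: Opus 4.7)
I would first handle the ``moreover'' assertion by a direct cyclic-vector construction. When $\psi$ is finite, $1\in\n_\psi\cap\n_\psi^\ast$ and $\Omega:=\Lambda_\psi'(1)\in L^2(N)$ is defined, satisfying $\Omega\cdot y=\pi_\psi'(y^{\op})\Lambda_\psi'(1)=\Lambda_\psi'(y)$ for every $y\in N$. Given $T\in\L(L^2(N)_N,\H_N)$, I would take $\xi:=T\Omega\in\H$; since $T$ is a right $N$-module map, $\xi y=T(\Omega\cdot y)=T\Lambda_\psi'(y)$ for all $y\in N$. As $\{\Lambda_\psi'(y):y\in N\}$ spans a dense subspace of $L^2(N)$ and $T$ is bounded, this shows $\xi\in L^\infty(\H_N,\psi)$ with $L_\psi(\xi)=T$.

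For the density statement in the general case, the plan is to reduce matters to producing a suitable approximate unit of left-$\psi$-bounded vectors inside $L^2(N)$. The key observation is that whenever $\eta\in L^\infty(L^2(N)_N,\psi)$ and $T\in\L(L^2(N)_N,\H_N)$, the vector $T\eta\in\H$ is again left-$\psi$-bounded with $L_\psi(T\eta)=TL_\psi(\eta)$: since $T$ commutes with $\pi_\psi'(N^{\op})$, one has $(T\eta)\cdot y=T(\eta\cdot y)=TL_\psi(\eta)\Lambda_\psi'(y)$ for $y\in\n_\psi\cap\n_\psi^\ast$, and boundedness extends the identity to all of $L^2(N)$. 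It therefore suffices to produce a net $(\eta_\alpha)\subset L^\infty(L^2(N)_N,\psi)$ with $\|L_\psi(\eta_\alpha)\|\leq 1$ and $L_\psi(\eta_\alpha)\to\id_{L^2(N)}$ in the strong operator topology; the vectors $\xi_\alpha:=T\eta_\alpha$ will then satisfy $\|L_\psi(\xi_\alpha)\|\leq 1$ and $L_\psi(\xi_\alpha)\to T$ strongly.

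To build $(\eta_\alpha)$, I would invoke the standard fact from modular theory that $N$ admits a bounded approximate unit $(a_\alpha)$ of entire $\sigma^\psi$-analytic elements with $\|a_\alpha\|\leq 1$, $a_\alpha\to 1$ strongly, and $\sigma_z^\psi(a_\alpha)\in\n_\psi\cap\n_\psi^\ast$ for every $z\in\IC$; such a net is obtained by Gaussian smoothing of a standard approximate unit in $\n_\psi\cap\n_\psi^\ast$ along the modular flow (see e.g.\ \cite{Tak03}). Setting $\eta_\alpha:=\Lambda_\psi'(\sigma_{i/2}^\psi(a_\alpha))$ and combining the identity $\Lambda_\psi'(w)=\Delta_\psi^{1/2}\Lambda_\psi(w)$ for $w\in\n_\psi\cap\n_\psi^\ast$ with the analytic intertwining $\pi_\psi(a)\Delta_\psi^{1/2}=\Delta_\psi^{1/2}\pi_\psi(\sigma_{i/2}^\psi(a))$ on $\dom(\Delta_\psi^{1/2})$ (valid for $\sigma^\psi$-analytic $a$), a short computation yields $\pi_\psi'(w^{\op})\eta_\alpha=\Lambda_\psi'(\sigma_{i/2}^\psi(a_\alpha)w)=\pi_\psi(a_\alpha)\Lambda_\psi'(w)$ for every $w\in\n_\psi\cap\n_\psi^\ast$, so that $L_\psi(\eta_\alpha)=\pi_\psi(a_\alpha)$ as elements of $\pi_\psi(N)$; the required convergence then follows at once. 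The main delicate step is the Tomita-algebra construction of $(a_\alpha)$ with every analytic continuation $\sigma_z^\psi(a_\alpha)$ remaining in $\n_\psi\cap\n_\psi^\ast$; although classical, it requires careful Gaussian estimates along the modular flow.
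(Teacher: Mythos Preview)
Your argument is correct and, once unwound, coincides with the paper's. Observe that $\Lambda_\psi'(\sigma_{i/2}^\psi(a_\alpha))=J_\psi\Lambda_\psi(\sigma_{-i/2}^\psi(a_\alpha^\ast))=J_\psi\Delta_\psi^{1/2}\Lambda_\psi(a_\alpha^\ast)=\Lambda_\psi(a_\alpha)$, so your approximating vectors are simply $\eta_\alpha=\Lambda_\psi(a_\alpha)$, and your identity $L_\psi(\eta_\alpha)=\pi_\psi(a_\alpha)$ is precisely the relation $\Lambda_\psi(x_\alpha)y=x_\alpha\Lambda_\psi'(y)$ that the paper invokes. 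The paper, however, uses this last identity directly for an arbitrary bounded approximate unit $(x_\alpha)\subset\n_\psi\cap\n_\psi^\ast$ with $\|x_\alpha\|\leq 1$ and $x_\alpha\to 1$ strongly, without any analyticity hypothesis: the Gaussian smoothing and the passage through $\Lambda_\psi'$ and modular intertwining are an unnecessary detour.
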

\begin{proof}
Let $T\in\L(L^2(N)_N,\H_N)\}$ with $\norm{T}\leq 1$. Choose $x_\alpha\in \n_\psi\cap\n_\psi^\ast$ with $\norm{x_\alpha}\leq 1$ and $x_\alpha\to 1$ strongly and let $\xi_\alpha=T\Lambda_\psi(x_\alpha)$. If $y\in \n_\psi\cap \n_\psi^\ast$, then
\begin{equation*}
    \xi_\alpha y=(T\Lambda_\psi(x_\alpha))y=T(\Lambda_\psi(x_\alpha)y)=Tx_\alpha\Lambda_\psi^\prime(y).
\end{equation*}
Thus $\xi_\alpha$ is left-bounded with $L_\psi(\xi_\alpha)=T x_\alpha$. Clearly, $Tx_\alpha\to T$ in the strong operator topology.

If $\psi$ is finite, we can take $x_\alpha=1$ and get $L_\psi(T\Lambda_\psi(1))=T$.
\end{proof}

\subsection{Relative Tensor Product of Correspondences}\label{sec:rel_tensor_prod}

There are three models of the relative tensor product of correspondences, which all come in handy in different situations. We describe here how they relate to each other.

Let $M$, $N$, $Q$ be von Neumann algebras, $\H$ a correspondence from $N$ to $M$ and $\mathcal K$ a correspondence from $Q$ to $N$. The relative tensor product $\H\otimes_N\mathcal K$ of $\H$ and $\mathcal K$ is the correspondence from $Q$ to $M$ with underlying Hilbert space obtained from the algebraic tensor product $\L(L^2(N)_N,\H_N)\odot \mathcal K$ after separation and completion with respect to the inner product
\begin{equation*}
    \langle S\otimes \xi,T\otimes \eta\rangle_{\H\otimes_N \mathcal K}=\langle \xi,\pi_l^{\mathcal K}(S^\ast T)\eta\rangle_{\mathcal K}.
\end{equation*}
Note that if $S,T\in\L(L^2(N)_N,\H_N)$, then $S^\ast T\in (N^\prime)^\prime=N$ so that this expression makes sense. We write $S\otimes_N \xi$ for the image of $S\otimes \xi$ in $\H\otimes_N\mathcal K$. The left and right action are given by
\begin{equation*}
    x(S\otimes_M \xi)y=\pi_l^\H(x)S\otimes_M \pi_r^\H(y^{\mathrm{op}})\eta.
\end{equation*}

The relative tensor product can alternatively be described by the tensor product of the associated von Neumann bimodules in the sense of the previous subsection.

\begin{lemma}
The map 
\begin{equation*}
    \L(L^2(N)_N,\H_N)\overline\odot_N \L(L^2(Q)_Q,\mathcal K_Q)\overline\odot_Q L^2(Q)\to \H\otimes_N \mathcal K,\,S\otimes_N T\otimes_Q\xi\mapsto S\otimes_N T\xi
\end{equation*}
is a unitary bimodule map.
\end{lemma}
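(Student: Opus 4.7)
My plan is to verify three things: the map is well-defined and isometric on elementary tensors, its image is dense, and it intertwines the $M$-$Q$-bimodule structure. The technical heart lies in the isometry calculation; density and the bimodule compatibility will follow essentially formally from the equivalence of categories recalled in Section \ref{sec:correspondences}.

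For the isometry I would directly unpack both inner products. The key observation is that the left $N$-action on the von Neumann $N$-$Q$-module $\L(L^2(Q)_Q, \mathcal K_Q)$ is post-composition with $\pi_l^{\mathcal K}$, so the $Q$-valued inner product on $\L(L^2(N)_N, \H_N) \overline\odot_N \L(L^2(Q)_Q, \mathcal K_Q)$ takes the form
\[
\langle S \otimes_N T, S' \otimes_N T'\rangle_Q = T^* \pi_l^{\mathcal K}(S^* S') T'.
\]
Pairing with $\xi, \xi' \in L^2(Q)$ then yields the scalar source inner product $\langle \xi, T^* \pi_l^{\mathcal K}(S^* S') T' \xi'\rangle_{L^2(Q)}$. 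On the target, the definition of the relative tensor product of correspondences gives
\[
\langle S \otimes_N T\xi, S' \otimes_N T'\xi'\rangle_{\H \otimes_N \mathcal K} = \langle T\xi, \pi_l^{\mathcal K}(S^* S') T'\xi'\rangle_{\mathcal K} = \langle \xi, T^* \pi_l^{\mathcal K}(S^* S') T'\xi'\rangle_{L^2(Q)},
\]
using that $T^*$ is the Hilbert space adjoint of $T$ viewed as a map $L^2(Q) \to \mathcal K$. The two expressions agree, so the map is isometric on the dense subspace of elementary tensors and extends by continuity.

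For density, I would appeal to the equivalence of categories: the map $\L(L^2(Q)_Q, \mathcal K_Q) \overline\odot L^2(Q) \to \mathcal K$, $T \otimes \xi \mapsto T\xi$, is already known to be a unitary isomorphism of correspondences from $Q$ to $N$, so elements of the form $T\xi$ are total in $\mathcal K$. Combined with the bound $\norm{S \otimes_N \eta}^2 = \langle \eta, \pi_l^{\mathcal K}(S^* S) \eta\rangle \leq \norm{S}^2 \norm{\eta}^2$, this shows that the image of our map is total in $\H \otimes_N \mathcal K$, promoting the isometry to a unitary. The bimodule structure is preserved by inspection: the left $M$-action acts on the first tensor factor as post-composition with $\pi_l^{\H}(m)$ in both source and target, while the right $Q$-action acts on $\xi$ in the source and translates to the right $Q$-action on $\mathcal K$ via $T\xi \cdot q = T(\xi \cdot q)$. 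The main obstacle is really the bookkeeping in the isometry step, where one has to carefully track how the $N$-valued inner product on the first module interacts with the left $N$-action on the second module to reproduce the formula defining $\H \otimes_N \mathcal K$; once this is in place, everything else is formal.
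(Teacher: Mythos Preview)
Your proposal is correct and follows essentially the same approach as the paper: the isometry is verified by the same chain of equalities unwinding the $Q$-valued inner product $\langle S\otimes_N T\vert S'\otimes_N T'\rangle_Q = T^\ast \pi_l^{\mathcal K}(S^\ast S')T'$ and pairing against $\xi,\xi'$, and density of the range is deduced from the fact that $\{T\xi : T\in \L(L^2(Q)_Q,\mathcal K_Q),\ \xi\in L^2(Q)\}$ is total in $\mathcal K$ (the paper cites \cite[Lemma IX.3.3]{Tak03} directly, while you invoke the equivalence of categories from Section~\ref{sec:correspondences}, which amounts to the same thing). The bimodule property is dismissed as clear in both.
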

\begin{proof}
For $S_1,S_2\in \L(L^2(N)_N,\H_N)$, $T_1,T_2\in \L(L^2(Q)_Q,\mathcal K_Q)$ and $\xi_1,\xi_2\in L^2(Q)$ we have
\begin{align*}
    \langle S_1\otimes_N T_1\xi_1,S_2\otimes_N T_2\xi_2\rangle&=\langle T_1\xi_1,\pi_l^{\mathcal K}(S_1^\ast S_2)T_2\xi_2\rangle\\
    &=\langle\xi_1,T_1^\ast\pi_l^{\mathcal K}(S_1^\ast S_2)T_2\xi_2\rangle\\
    &=\langle\xi_1,\langle S_1\otimes_N T_1\vert S_2\otimes_N T_2\rangle_Q \xi_2\rangle\\
    &=\langle S_1\otimes_N T_1\otimes_Q\xi_1,S_2\otimes_N T_2\otimes_Q \xi_2\rangle.
\end{align*}
Thus the map is isometric. Moreover, the set $\{T\xi\mid T\in \L(L^2(Q)_Q,\mathcal K_Q),\,\xi\in L^2(Q)\}$ is dense in $\mathcal K$ by \cite[Lemma IX.3.3]{Tak03}. Hence the map has dense range. Again, the bimodule property is clear.
\end{proof}

A third description of the relative tensor product relies on the choice of a normal semi-finite faithful weight $\psi$ on $N$. Let $\H\otimes_\psi\mathcal K$ be the Hilbert space obtained from $L^\infty(\H_N,\psi)\odot\mathcal K$ with respect to the inner product
\begin{equation*}
    \langle \xi_1\otimes \eta_1,\xi_2\otimes\eta_2\rangle=\langle \eta_1,\pi_l^{\mathcal K}(L_\psi(\xi_1)^\ast L_\psi(\xi_2))\eta_2\rangle.
\end{equation*}
We write $\xi\otimes_\psi\eta$ for the image of $\xi\otimes\eta$ inside $\H\otimes_\psi\mathcal K$. The Hilbert space $\H\otimes_\psi\mathcal K$ becomes a correspondence from $Q$ to $M$ when endowed with the actions
\begin{equation*}
    x(\xi\otimes_\psi \eta)y=\pi_l^\H(x)\xi\otimes_\psi \pi_r^{\mathcal K}(y^{\mathrm{op}})\eta.
\end{equation*}

\begin{lemma}
The map
\begin{equation*}
    \H\otimes_\psi\mathcal K\to \H\otimes_N \mathcal K,\,\xi\otimes_\psi\eta\mapsto L_\phi(\xi)\otimes_N \eta
\end{equation*}
is a unitary bimodule map.
\end{lemma}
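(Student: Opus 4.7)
The plan is to verify three things in order: the map preserves inner products (hence is well-defined and isometric), has dense range, and is a bimodule map.

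First, for the isometry, I would simply substitute $S_i = L_\psi(\xi_i)$ into the definition of the inner product on $\H\otimes_N\mathcal K$:
\begin{align*}
\langle L_\psi(\xi_1)\otimes_N\eta_1,L_\psi(\xi_2)\otimes_N\eta_2\rangle
=\langle\eta_1,\pi_l^{\mathcal K}(L_\psi(\xi_1)^\ast L_\psi(\xi_2))\eta_2\rangle,
\end{align*}
which matches the defining inner product on $\H\otimes_\psi\mathcal K$ exactly. Linearity in $\xi$ (so that the quotient map $\xi\otimes\eta\mapsto L_\psi(\xi)\otimes_N\eta$ descends to $\H\otimes_\psi\mathcal K$) is immediate from $\xi\mapsto L_\psi(\xi)$ being linear where defined.

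Second, for density of the range, elements of the form $T\otimes_N\eta$ with $T\in\L(L^2(N)_N,\H_N)$ and $\eta\in\mathcal K$ span a dense subspace of $\H\otimes_N\mathcal K$ by construction. By the previous lemma I can, for a fixed such $T$ in the unit ball, choose a net $\xi_\alpha\in L^\infty(\H_N,\psi)$ with $\norm{L_\psi(\xi_\alpha)}\leq 1$ and $L_\psi(\xi_\alpha)\to T$ strongly. Writing $S_\alpha=L_\psi(\xi_\alpha)-T$, the uniform bound plus the Cauchy--Schwarz estimate $|\langle S_\alpha\zeta,S_\alpha\zeta'\rangle|\leq\norm{S_\alpha\zeta}\,\norm{S_\alpha\zeta'}$ yields $S_\alpha^\ast S_\alpha\to 0$ in the weak operator topology on $L^2(N)$. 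Since $\pi_l^{\mathcal K}$ is a normal representation of $N$ and the net is bounded, $\langle\eta,\pi_l^{\mathcal K}(S_\alpha^\ast S_\alpha)\eta\rangle\to 0$, so $L_\psi(\xi_\alpha)\otimes_N\eta\to T\otimes_N\eta$. Homogeneity extends this to all $T$.

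Finally, the bimodule property reduces to the identity $L_\psi(\pi_l^\H(x)\xi)=\pi_l^\H(x)L_\psi(\xi)$ for $x\in M$, which holds because both sides send $\Lambda_\psi^\prime(y)$ to $\pi_l^\H(x)(\xi y)$; the right action of $Q$ is carried by the tensor factor $\eta$ on both sides, so it is transported verbatim. The main (and only) mildly delicate point is the SOT-to-tensor-product convergence in the density step, which is handled by the normality argument above.
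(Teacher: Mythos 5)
Your proof is correct and follows essentially the same route as the paper: isometry by direct substitution of $L_\psi(\xi_i)$ into the defining inner product of $\H\otimes_N\mathcal K$, dense range via the strong density of $\{L_\psi(\xi)\}$ in $\L(L^2(N)_N,\H_N)$, and the bimodule property from $L_\psi(x\xi)=\pi_l^\H(x)L_\psi(\xi)$. The only difference is that you spell out the SOT-to-relative-tensor-product convergence in the density step (which the paper leaves implicit), and that argument is sound since $S_\alpha^\ast S_\alpha\in N$ and $\pi_l^{\mathcal K}$ is normal.
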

\begin{proof}
For $\xi_1,\xi_2\in \H$ left $\psi$-bounded and $\eta_1,\eta_2\in \mathcal K$ one has
\begin{align*}
    \langle L_\psi(\xi_1)\otimes_N \eta_1,L_\psi(\xi_2)\otimes_N\eta_2\rangle&=\langle \eta_1,\pi_l^{\mathcal K}(L_\psi(\xi_1)^\ast L_\psi(\xi_2))\eta_2\rangle\\
    &= \langle \xi_1\otimes_\psi \eta_1,\xi_2\otimes_\psi\eta_2\rangle
\end{align*}
Thus the map is isometric. As $\{L_\psi(\xi)\mid \xi\in L^\infty(\H_N,\psi)\}$ is dense in $\L(L^2(N)_N,\H_N)$ with respect to the strong operator topology, the map also has dense range. The bimodule property is clear.
\end{proof}

One can also define the relative tensor product of a right and a left module map.

\begin{lemma}\label{lem:rel_tensor_prod_maps}
    Let $M$, $N$, $Q$ be von Neumann algebras, $\H_1$, $\H_2$ correspondences from $N$ to $M$ and $\mathcal K_1,\mathcal K_2$ correspondences from $Q$ to $N$. If $\Phi\colon \H_1\to\H_2$ is a bounded right module map and $\Psi\colon \mathcal K_1\to\mathcal K_2$ is a bounded left module map, then there exists a unique bounded linear operator $\Phi\otimes_N \Psi\colon \H_1\otimes_N\mathcal K_1\to \H_2\otimes_N\mathcal K_2$ such that 
    \begin{equation*}
        (\Phi\otimes_N \Psi)(S\otimes_N \xi)=\Phi S\otimes_N \Psi \xi
    \end{equation*}
    for all $S\in \L(L^2(N)_N,(\H_1)_N)$ and $\xi\in\mathcal K_1$. Moreover, $\norm{\Phi\otimes_N \Psi}\leq \norm{\Phi}\norm{\Psi}$ and $(\Phi\otimes_N\Psi)^\ast=\Phi^\ast\otimes_N\Psi^\ast$.
\end{lemma}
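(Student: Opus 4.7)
The plan is to define $\Phi\otimes_N\Psi$ on the dense subspace of simple tensors by the prescribed formula and extend by continuity once the bound $\norm{\Phi\otimes_N\Psi}\leq\norm{\Phi}\norm{\Psi}$ is established. Since $\Phi$ is a bounded right $N$-module map, $\Phi S\in\L(L^2(N)_N,(\H_2)_N)$ for every $S\in\L(L^2(N)_N,(\H_1)_N)$, so the defining formula really lands in $\H_2\otimes_N\mathcal K_2$. Uniqueness of the extension is immediate, and the adjoint formula will reduce to a computation on simple tensors once existence is in hand.

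To control the norm, I would separate the contributions of $\Phi$ and $\Psi$. For finitely many $S_i\in\L(L^2(N)_N,(\H_1)_N)$ and $\xi_i\in\mathcal K_1$,
\[
\Bigl\|\sum_i\Phi S_i\otimes_N \Psi\xi_i\Bigr\|^2=\sum_{i,j}\langle\Psi\xi_i,\pi_l^{\mathcal K_2}(S_i^*\Phi^*\Phi S_j)\Psi\xi_j\rangle.
\]
For the $\Phi$-step, consider $V\colon\bigoplus_{i=1}^n L^2(N)\to\H_1$, $(\eta_i)_i\mapsto\sum_i S_i\eta_i$. Then $V^*V=(S_i^*S_j)_{ij}$ and $(\Phi V)^*(\Phi V)=V^*\Phi^*\Phi V$, so the trivial bound $\Phi^*\Phi\leq\norm{\Phi}^2\id_{\H_1}$ amplifies to the matrix inequality $(S_i^*\Phi^*\Phi S_j)_{ij}\leq\norm{\Phi}^2(S_i^*S_j)_{ij}$ in $M_n(N)$. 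Applying the normal $*$-homomorphism $\pi_l^{\mathcal K_2}$ entrywise and pairing with $(\Psi\xi_j)_j$ gives
\[
\Bigl\|\sum_i\Phi S_i\otimes_N\Psi\xi_i\Bigr\|^2\leq\norm{\Phi}^2\sum_{i,j}\langle\Psi\xi_i,\pi_l^{\mathcal K_2}(S_i^*S_j)\Psi\xi_j\rangle.
\]

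For the $\Psi$-step, the left-module-map property $\Psi\pi_l^{\mathcal K_1}(y)=\pi_l^{\mathcal K_2}(y)\Psi$ dualizes to $\Psi^*\pi_l^{\mathcal K_2}(y)=\pi_l^{\mathcal K_1}(y)\Psi^*$, and in particular $\Psi^*\Psi$ commutes with the left $N$-action on $\mathcal K_1$. Rewriting the right hand side of the previous estimate as $\langle\vec\xi,MD\vec\xi\rangle$ on $\bigoplus^n\mathcal K_1$ with $M=(\pi_l^{\mathcal K_1}(S_i^*S_j))_{ij}\geq 0$ and $D=\operatorname{diag}(\Psi^*\Psi,\dots,\Psi^*\Psi)\leq\norm{\Psi}^2\id$ two commuting positive operators, one bounds it by $\norm{\Psi}^2\bigl\|\sum_i S_i\otimes_N\xi_i\bigr\|^2$. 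Combining both steps yields the desired norm estimate and hence existence and uniqueness of the bounded extension. The adjoint formula $(\Phi\otimes_N\Psi)^*=\Phi^*\otimes_N\Psi^*$ then follows from a direct computation on simple tensors that uses once more the commutation identity above; note that $\Phi^*$ and $\Psi^*$ remain a bounded right and left module map respectively.

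The main obstacle I anticipate is precisely the asymmetry between $\Phi$, a right module map on the first factor, and $\Psi$, a left module map on the second factor, which rules out a single operator-valued Cauchy--Schwarz estimate and necessitates the two-stage argument above. Once the matrix inequality for $\Phi$ and the commutation property for $\Psi^*\Psi$ are in place, everything else is routine bookkeeping.
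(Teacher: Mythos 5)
Your proof is correct and follows essentially the same route as the paper: a two-stage estimate that first replaces $\Phi^*\Phi$ by $\norm{\Phi}^2$ via the matrix inequality $(S_i^*\Phi^*\Phi S_j)_{ij}\leq\norm{\Phi}^2(S_i^*S_j)_{ij}$ and then uses the left-modularity of $\Psi$ to pull out $\norm{\Psi}^2$, followed by the same simple-tensor computation for the adjoint. The only difference is that you spell out the justifications (the amplification $V^*V$ argument and the commutation of $\Psi^*\Psi$ with the positive matrix $(\pi_l^{\mathcal K_1}(S_i^*S_j))_{ij}$) that the paper leaves implicit.
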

\begin{proof}
    Let $S_1,\dots,S_n\in \L(L^2(N)_N,(\H_1)_N)$ and $\xi_1,\dots,\xi_n\in\mathcal K_1$. Since $\Phi\colon \H_1\to\H_2$ is a bounded right module map, one has $\Phi S_k\in \L(L^2(N)_N,(\H_2)_N)$ for $1\leq k\leq n$.

    By definition of the relative tensor product and the left modularity of $\Psi$,
    \begin{align*}
        \left\lVert\sum_{k=1}^n \Phi S_k\otimes_N \Psi\xi_k\right\rVert^2&=\sum_{j,k=1}^n \langle \Psi\xi_j,(S_j^\ast \Phi^\ast \Phi S_k)\cdot\Psi\xi_k\rangle_{\mathcal K_2}\\
        &\leq \norm{\Phi}^2\sum_{j,k=1}^n \langle \Psi\xi_j,(S_j^\ast S_k)\cdot\Psi\xi_k\rangle_{\mathcal K_2}\\
        &=\norm{\Phi}^2\sum_{j,k=1}^n \langle \Psi\xi_j,\Psi((S_j^\ast S_k)\cdot\xi_k)\rangle_{\mathcal K_2}\\
        &\leq \norm{\Phi}^2\norm{\Psi}^2\sum_{j,k=1}^n \langle \xi_j,(S_j^\ast S_k)\cdot \xi_k\rangle\\
        &=\norm{\Phi}^2\norm{\Psi}^2\left\lVert\sum_{k=1}^n S_k\otimes_N \xi_k\right\rVert^2.
    \end{align*}
    Moreover, if $S_i\in \L(L^2(N)_N,(\H_i)_N)$ and $\xi_i\in \mathcal K_i$ for $i\in\{1,2\}$, then
    \begin{align*}
        \langle \Phi S_1\otimes_N \Psi(\xi_1),S_2\otimes_N \xi_2\rangle
        &=\langle \Psi(\xi_1),(S_1^\ast \Phi^\ast S_2)\cdot\xi_2\rangle\\
        &=\langle\xi_1,(S_1^\ast \Phi^\ast S_2)\cdot\Psi^\ast(\xi_2)\rangle\\
        &=\langle S_1\otimes_N \xi_1,\Phi^\ast S_2\otimes_N \Psi^\ast(\xi_2)\rangle.
    \end{align*}
    Thus $(\Phi\otimes_N\Psi)^\ast=\Phi^\ast\otimes_N\Psi^\ast$.
\end{proof}
It is easy to see that in the other pictures of the relative tensor product, the relative tensor product of maps corresponds to $S\otimes_M T\otimes_M\xi\mapsto \Phi S\otimes_M \Psi T\otimes_M\xi$ and $\xi\otimes_\psi\eta\mapsto \Phi(\xi)\otimes_\psi\Psi(\eta)$, respectively.

\subsection{Tomita Correspondences}\label{sec:Tomita_prelim}

Tomita correspondences were introduced in \cite{Wir22b} in the study of generators of GNS-symmetric quantum Markov semigroups. Let us recall their definition.

\begin{definition}\label{Def:Tomita correspondence}
    Let $M$ be a von Neumann algebra and $\phi$ a normal semi-finite faithful weight on $M$. A \emph{Tomita correspondence} over $(M,\phi)$ is a triple $(\H,\J,(\U_t))$ consisting of a correspondence $\H$ from $M$ to itself, an anti-unitary involution $\J\colon \H\to \H$ and a strongly continuous unitary group $(\U_t)$ on $\H$ such that
    \begin{itemize}
        \item $\J(x\xi y)=y^\ast(\J\xi)x^\ast$ for all $x,y\in M$, $\xi\in \H$,
        \item $\U_t(x\xi y)=\sigma^\phi_t(x)(\U_t\xi)\sigma^\phi_t(y)$ for all $x,y\in M$, $\xi\in\H$,
        \item $\U_t\J=\J\U_t$ for all $t\in\IR$.
    \end{itemize}
\end{definition}
In the context of this article, Tomita correspondences can be seen as bimodule versions of the data underlying Shlyakhtenko's free Araki--Woods algebras and Hiai's $q$-Araki--Woods algebras, namely a complex Hilbert space, an anti-unitary involution and a strongly continuous unitary group that commutes with the involution.

The primary class of examples of Tomita correspondences comes from inclusions of von Neumann algebras with expectation: If $M$ is a von Neumann algebra, $\phi$ a normal semi-finite faithful weight on $M$ and $M\subset \hat M$ is a unital inclusion of von Neumann algebras with a faithful normal expectation $E\colon \hat M\to M$, then $(L^2(\hat M),J_{\phi\circ E},(\Delta_{\phi\circ E}^{it}))$ is Tomita correspondence over $(M,\phi)$. More generally, the same is true if $E$ is replaced by a normal semi-finite faithful operator-valued weight \cite{Haa79}.

A consequence of the construction of the operator-valued free Araki--Woods algebras in \cite[Section 4.2]{Wir22b} (called free Gaussian algebras there) is that in general Tomita correspondences are exactly those subbimodules of the Tomita correspondences from the previous paragraph that are invariant under the modular conjugation and modular group.

Let $(\H,\J,(\U_t))$ be a Tomita correspondence over $(M,\phi)$. By Stone's theorem, there exists a non-singular positive self-adjoint operator $\mathcal{A}$ on $\H$ such that $\U_t=\mathcal{A}^{it}$ for $t\in\IR$. We write $\U_z$ for the (possibly unbounded) operator $\mathcal{A}^{iz}$ with $z\in\IC$. In particular, $\mathcal{A}=\U_{-i}$.

By the spectral theorem, $\bigcap_{z\in\IC}\dom(\U_z)$ is dense in $\H$. In fact, in the context of Tomita correspondences, more is true \cite[Propositon 4.8]{Wir22b}.

\begin{proposition}\label{prop:bdd_analytic_vectors_dense}
    If $(\H,\J,(\U_t))$ is a Tomita correspondence over $(M,\phi)$, then the set
    \begin{equation*}
        \H_0=\left\{\xi\in \bigcap_{z\in\IC}\dom(\U_z): \U_z\xi\in L^\infty(_M\H_M,\phi)\text{ for all }z\in\IC\right\}
    \end{equation*}
    is dense in $\H$.
\end{proposition}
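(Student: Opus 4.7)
The plan is to use the classical Gaussian smoothing trick adapted to the bimodule setting. I first pick a dense subspace of two-sided $\phi$-bounded vectors: for $x,y\in\n_\phi\cap\n_\phi^\ast$ and any $\eta\in\H$, the vector $x\eta y$ lies in $L^\infty(_M\H_M,\phi)$, and taking bounded approximate units $(x_\alpha),(y_\beta)\subset\n_\phi\cap\n_\phi^\ast$ converging strongly to $1$ gives $x_\alpha\eta y_\beta\to\eta$ by normality of the bimodule actions. For each $\xi$ in this dense set and each $n\in\mathbb N$, I set
\[
\xi_n=\sqrt{n/\pi}\int_\IR e^{-nt^2}\,\U_t\xi\,dt,
\]
a Bochner integral that converges (since $\norm{\U_t\xi}=\norm{\xi}$) and satisfies $\xi_n\to\xi$ as $n\to\infty$. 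It remains to show that $\xi_n\in\H_0$.

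Entire analyticity of $t\mapsto\U_t\xi_n$ is a standard computation. The function
\[
F(z)=\sqrt{n/\pi}\int_\IR e^{-n(t-z)^2}\,\U_t\xi\,dt
\]
is defined for every $z\in\IC$ because the integrand is dominated by $e^{-n(t-\operatorname{Re}z)^2+n(\operatorname{Im}z)^2}\norm{\xi}$, is entire in $z$ by dominated convergence, and agrees with $\U_s\xi_n$ for real $s$ after the substitution $t\mapsto t+s$. Hence $\xi_n\in\dom(\U_z)$ and $\U_z\xi_n=F(z)$ for every $z\in\IC$.

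The key step is to verify that $\U_z\xi_n$ itself lies in $L^\infty(_M\H_M,\phi)$ for every $z$. The modular covariance $\U_t(x\xi y)=\sigma^\phi_t(x)(\U_t\xi)\sigma^\phi_t(y)$, combined with the identities $\Lambda_\phi\circ\sigma^\phi_t=\Delta_\phi^{it}\Lambda_\phi$ and $\Lambda_\phi^\prime\circ\sigma^\phi_t=\Delta_\phi^{-it}\Lambda_\phi^\prime$, yields
\[
R_\phi(\U_t\xi)=\U_tR_\phi(\xi)\Delta_\phi^{-it},\qquad L_\phi(\U_t\xi)=\U_tL_\phi(\xi)\Delta_\phi^{-it},
\]
so both $\norm{R_\phi(\U_t\xi)}$ and $\norm{L_\phi(\U_t\xi)}$ are independent of $t$. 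Consequently, the Bochner integrals of $e^{-n(t-z)^2}R_\phi(\U_t\xi)$ and $e^{-n(t-z)^2}L_\phi(\U_t\xi)$ converge absolutely in operator norm, and a short computation shows that they implement $R_\phi(\U_z\xi_n)$ and $L_\phi(\U_z\xi_n)$ respectively. Hence $\U_z\xi_n\in L^\infty(_M\H_M,\phi)$, so $\xi_n\in\H_0$.

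The one place where the Tomita correspondence structure plays an essential role is the covariance rule for $R_\phi$ and $L_\phi$ under $\U_t$; without it there would be no uniform bound on the bounded-vector norms along the orbit $\U_t\xi$, and the Gaussian smoothing would fail to preserve two-sided boundedness. Everything else is a routine combination of Bochner integration and the standard analytic-vector argument for one-parameter unitary groups.
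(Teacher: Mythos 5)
There is a genuine gap at the very first step. You claim that $x\eta y\in L^\infty(_M\H_M,\phi)$ for every $\eta\in\H$ and $x,y\in\n_\phi\cap\n_\phi^\ast$, but this is false: left (resp.\ right) $\phi$-boundedness of a vector $\xi$ is the requirement that $\Lambda_\phi^\prime(a)\mapsto \xi a$ (resp.\ $\Lambda_\phi(a)\mapsto a\xi$) be bounded with respect to the \emph{GNS norm} on $a$, and compressing an arbitrary vector by bounded operators acting on the left and right does nothing to produce that kind of continuity. Concretely, take $M=L^\infty([0,1])$ with Lebesgue integration as a finite trace, $\H=L^2([0,1])$ the trivial bimodule, $x=y=1\in\n_\phi\cap\n_\phi^\ast$, and $\eta\in L^2\setminus L^\infty$: then $x\eta y=\eta$ is not left bounded, since $L_\phi(\eta)$ would have to be multiplication by $\eta$. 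What your argument actually requires as input is the density in $\H$ of the set $L^\infty(_M\H_M,\phi)$ of \emph{two-sided} bounded vectors. That is true, but it is itself a nontrivial theorem about correspondences; the first lemma of Section 2.3 (equivalently \cite[Lemma IX.3.3]{Tak03}) only yields density of one-sided bounded vectors, and there is no cheap upgrade from one-sided to two-sided boundedness of the kind you attempt. Until this density statement is cited or proved, the argument does not get off the ground.

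Everything after that point is correct, and it matches the intended argument: the paper does not prove this proposition itself but quotes it as \cite[Proposition 4.8]{Wir22b}, where the proof is exactly Gaussian smearing along $(\U_t)$ applied to a dense set of two-sided bounded vectors. Your identification of the covariance identities $L_\phi(\U_t\xi)=\U_t L_\phi(\xi)\Delta_\phi^{-it}$ and $R_\phi(\U_t\xi)=\U_t R_\phi(\xi)\Delta_\phi^{-it}$ (this is \cite[Lemma 4.7]{Wir22b}, invoked elsewhere in the paper) as the one place where the Tomita structure enters is exactly right: it gives $\norm{L_\phi(\U_t\xi)}=\norm{L_\phi(\xi)}$ and $\norm{R_\phi(\U_t\xi)}=\norm{R_\phi(\xi)}$ uniformly in $t$, which is what makes the Bochner integrals for $L_\phi(\U_z\xi_n)$ and $R_\phi(\U_z\xi_n)$ converge; and the entire-vector argument for $\U_z\xi_n=F(z)$ together with $\xi_n\to\xi$ is the standard one. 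So the fix is local: replace your construction of a dense set of bounded vectors by an appeal to (or proof of) the density of $L^\infty(_M\H_M,\phi)$, and the rest of your proof stands as written.
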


\section{Tomita Correspondences over Type I Factors}\label{sec:disintegration_Tomita_corr}

As discussed in the introduction, the operator-valued free Araki--Woods factors we construct coincide with Shlyakhtenko's free Araki--Woods algebras from \cite{Shl97} in the case when the base algebra is $\IC$. One key tool in the analysis of free Araki--Woods factors is the fact that the Hilbert space can be decomposed into a direct integral of one- and two-dimensional subspaces that are invariant under both the anti-unitary involution and the unitary group.

This decomposition does not immediately carry over to the operator-valued case as one will only get a direct integral of Hilbert spaces that does not respect the bimodule structure. This problem boils down to the fact that the unitary group in the definition of a Tomita correspondence does not consist of bimodule maps, but rather maps that respect the bimodule structure up to a twist by the modular group.

If the underlying von Neumann algebra is semi-finite, this twist can be corrected in a sense, giving rise to a direct integral decomposition of Tomita correspondences analogous to the scalar-valued case that additionally respects the bimodule structure. We combine this with the representation theory of type I factors to obtain a complete characterization of separable Tomita correspondences over type I factors.

Let $(M,\tau)$ be a tracial von Neumann algebra, $h$ a non-singular positive self-adjoint operator affiliated with $M$ and $\phi=\tau(h^{1/2}\,\cdot\,h^{1/2})$. We have $\sigma^\phi_t=h^{it}\,\cdot\,h^{-it}$ and $f(h)$ belongs to the centralizer $M^\phi$ for every bounded Borel function $f\colon\IR\to\IC$.

Let $\H$ be a separable Tomita correspondence from $(M,\phi)$ to itself. By definition, $\U_t\in\pi_l^\H(M^\phi)^\prime\cap \pi_r^\H(M^\phi)^\prime$ for all $t\in\IR$. The following lemma is now immediate.
\begin{lemma}
The operators $\V_t$, $t\in \IR$, given by
\begin{equation*}
\V_t\colon\H\to \H,\,\xi\mapsto\U_t(h^{-it}\xi h^{it})
\end{equation*}
form a strongly continuous unitary group on $\H$.
\end{lemma}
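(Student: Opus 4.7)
The lemma requires verifying three things: that each $\V_t$ is unitary, that $(\V_t)$ satisfies the group law $\V_s\V_t=\V_{s+t}$, and that $t\mapsto \V_t\xi$ is continuous for every $\xi\in\H$. Unitarity will be immediate once we observe that $h^{it}$ is a unitary in $M$, so left multiplication by $h^{-it}$ and right multiplication by $h^{it}$ are unitary operators on $\H$; combined with unitarity of $\U_t$, this makes $\V_t$ a composition of unitaries. Moreover $\V_0=\id$ since $\U_0=\id$ and $h^0=1$.

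The only nontrivial point is the group law, and here the key observation is that, by the hypothesis $\phi=\tau(h^{1/2}\,\cdot\,h^{1/2})$ with $h$ affiliated with $M$, every unitary $h^{it}$ lies in the modular centralizer $M^\phi$, so $\sigma^\phi_s(h^{it})=h^{it}$ for all $s,t\in\IR$. The plan is to apply the Tomita correspondence identity $\U_s(x\eta y)=\sigma^\phi_s(x)\U_s(\eta)\sigma^\phi_s(y)$ twice to
\begin{equation*}
\V_s\V_t(\xi)=\U_s\bigl(h^{-is}\,\U_t(h^{-it}\xi h^{it})\,h^{is}\bigr):
\end{equation*}
first to pull the outer factors $h^{\mp is}$ through $\U_s$, turning $\U_s\U_t$ into $\U_{s+t}$ while leaving $h^{\mp is}$ untouched by the centralizer observation, and then to absorb them back inside the resulting $\U_{s+t}$, again fixed by $\sigma^\phi_{s+t}$. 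The expression reduces to $\U_{s+t}\bigl(h^{-i(s+t)}\xi h^{i(s+t)}\bigr)=\V_{s+t}(\xi)$.

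Strong continuity will follow from the standard triangle-inequality split
\begin{equation*}
\V_t\xi-\V_{t_0}\xi=\U_t\bigl(h^{-it}\xi h^{it}-h^{-it_0}\xi h^{it_0}\bigr)+(\U_t-\U_{t_0})(h^{-it_0}\xi h^{it_0}).
\end{equation*}
The first summand tends to zero because the normal representations $\pi_l^\H$ and $\pi_r^\H$ carry the strongly continuous unitary group $(h^{it})\subset M$ to strongly continuous one-parameter unitary groups on $\H$; the second tends to zero by the assumed strong continuity of $(\U_t)$.

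I do not expect any real obstacle: the entire content of the lemma lies in the centralizer observation $h^{it}\in M^\phi$, which lets the modular twist built into $\U_t$ cancel exactly against the conjugating factors $h^{\mp it}$ in the definition of $\V_t$, converting a mere twisted unitary family into a genuine strongly continuous one-parameter group.
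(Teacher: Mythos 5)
Your proposal is correct and follows exactly the route the paper intends: the paper declares the lemma ``immediate'' from the observation that $h^{it}\in M^\phi$, hence $\U_s\in\pi_l^\H(M^\phi)'\cap\pi_r^\H(M^\phi)'$, and your computation of the group law (pulling $h^{\mp is}$ through $\U_s$ and reabsorbing them into $\U_{s+t}$) together with the triangle-inequality argument for strong continuity is precisely the fleshing-out of that remark.
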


By Stone's theorem, there exists a unique self-adjoint operator $A$ on $\mathcal H$ such that $\V_t=e^{itA}$ for all $t\in\mathbb R$.

\begin{lemma}\label{lem:antisym_J_A}
For every Borel set $B\subset \IR$ one has $\1_{-B}(A)=\J \1_B(A)\J$.
\end{lemma}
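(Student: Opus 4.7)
The plan is to first show the operator-level identity $\J A\J = -A$ and then apply the Borel functional calculus to transfer this to spectral projections. The identity $\J A\J = -A$ is equivalent, via Stone's theorem, to $\J \V_t \J = \V_t$ for all $t\in\IR$, since $\J$ is anti-unitary and so conjugation sends $e^{itA}$ to $e^{-it(\J A\J)}$. So the whole problem reduces to checking that $\V_t$ commutes with $\J$.

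The key computation uses all three axioms of a Tomita correspondence together with the fact that $h$ is affiliated with $M^\phi$ (so that $h^{it}$ is a well-defined element acting on $\H$ from left and right via the bimodule structure, with adjoint $h^{-it}$). For $\xi\in\H$ I would write
\begin{align*}
    \J\V_t\J\xi
    &= \J\U_t\bigl(h^{-it}(\J\xi)h^{it}\bigr)\\
    &= \U_t\J\bigl(h^{-it}(\J\xi)h^{it}\bigr)\\
    &= \U_t\bigl((h^{it})^\ast(\J\J\xi)(h^{-it})^\ast\bigr)\\
    &= \U_t\bigl(h^{-it}\xi h^{it}\bigr)=\V_t\xi,
\end{align*}
using in order the definition of $\V_t$, the commutation $\J\U_t=\U_t\J$, the first axiom $\J(x\eta y)=y^\ast(\J\eta)x^\ast$ applied to $\eta=\J\xi$, and $\J^2=\id$ together with $(h^{\pm it})^\ast=h^{\mp it}$. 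This gives $\J\V_t\J=\V_t$ on all of $\H$, and hence $\J A\J=-A$.

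Finally, since $\J$ is anti-unitary, for any bounded Borel function $f\colon\IR\to\IC$ one has $\J f(A)\J=\bar f(\J A\J)=\bar f(-A)$ (this is just the statement that the spectral calculus is compatible with anti-unitary conjugation, provable by approximating $f$ by polynomials in $e^{itA}$ and passing to the strong operator limit). Applying this to $f=\1_B$, which is real-valued so that $\bar f=f$, yields $\J\1_B(A)\J=\1_B(-A)=\1_{-B}(A)$. I do not expect any real obstacle here: the only subtle point is keeping track of the anti-linearity of $\J$ when pulling it through $i$ in Stone's formula, which is what converts the trivial-looking identity $\J\V_t\J=\V_t$ into the sign-flipping statement $\J A\J=-A$.
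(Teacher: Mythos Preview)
Your proof is correct and follows essentially the same route as the paper: both establish $\J\V_t\J=\V_t$ by the identical direct computation using the Tomita correspondence axioms. The only cosmetic difference is in the passage to spectral projections---the paper uses the Fourier inversion formula for real Schwartz functions and then approximates $\1_B$, whereas you extract the generator identity $\J A\J=-A$ from Stone's theorem and invoke the compatibility of the Borel functional calculus with anti-unitary conjugation; these are equivalent ways of packaging the same step.
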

\begin{proof}
If $\xi\in \H$ and $t\in\IR$, then
\begin{equation*}
    \J\V_t\J\xi=\J\U_t(h^{-it}(\J\xi)h^{it})=\U_t(h^{-it}\xi h^{it})=\V_t\xi.
\end{equation*}
Thus $\J\V_t\J=\V_t$ for all $t\in\IR$.

Let $f\colon \IR\to\IR$ be a Schwartz function. By the Fourier inversion formula one has
\begin{align*}
    f(-A)
    &=\frac 1{\sqrt{2\pi}}\int_{\IR}\hat f(-t)e^{itA}\,dt\\
    &=\frac 1{\sqrt{2\pi}}\int_{\IR}\hat f(-t)\J e^{itA}\J\,dt\\
    &=\J\left(\frac 1{\sqrt{2\pi}}\int_{\IR}\hat f(t)e^{itA}\,dt\right)\J\\
    &=\J f(A) \J.
\end{align*}
Now the claim follows by approximation of $\1_B$ by Schwartz functions.
\end{proof}

\begin{lemma}
There exists a Borel probability measure $\mu$ on $\IR$ such that $\mu(B)=0$ if and only if $\1_B(A)=0$ and $\mu(B)=\mu(-B)$ for all $B\subset \IR$ Borel.
\end{lemma}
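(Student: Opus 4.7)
The plan is to first produce a probability measure $\mu_0$ on $\IR$ whose null sets are exactly the Borel sets $B$ with $\1_B(A)=0$ (a so-called scalar-valued spectral measure for $A$), and then to symmetrize it using the identity $\1_{-B}(A)=\J\1_B(A)\J$ from \Cref{lem:antisym_J_A}.

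For the first step I would exploit the separability of $\H$. Choose a dense sequence $(\xi_n)$ in the unit ball of $\H$ and put
\begin{equation*}
    \mu_0(B)=\sum_{n=1}^\infty 2^{-n}\langle\xi_n,\1_B(A)\xi_n\rangle,\qquad B\subset\IR\text{ Borel}.
\end{equation*}
This is a Borel probability measure. If $\1_B(A)=0$ then clearly $\mu_0(B)=0$. Conversely, if $\mu_0(B)=0$, then $\langle\xi_n,\1_B(A)\xi_n\rangle=0$ for every $n$, which, since $\1_B(A)$ is a projection, yields $\1_B(A)\xi_n=0$ for all $n$; density of $(\xi_n)$ then forces $\1_B(A)=0$.

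For the symmetrization, define
\begin{equation*}
    \mu(B)=\tfrac 12\bigl(\mu_0(B)+\mu_0(-B)\bigr).
\end{equation*}
Symmetry $\mu(B)=\mu(-B)$ is built in. For the equivalence claim, note that by \Cref{lem:antisym_J_A} one has $\1_{-B}(A)=\J\1_B(A)\J$, and since $\J$ is an anti-unitary, $\1_{-B}(A)=0$ if and only if $\1_B(A)=0$. Hence $\mu_0(-B)=0$ iff $\mu_0(B)=0$, and therefore $\mu(B)=0$ iff $\mu_0(B)=0$ iff $\1_B(A)=0$. This completes the construction.

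The argument is essentially routine; the only place to be careful is verifying that the equivalence of $\mu_0$ with the spectral projections transfers to $\mu$, and that hinges on the symmetry of the null sets of $\1_{(\cdot)}(A)$ provided by the previous lemma.
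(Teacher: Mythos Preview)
Your proof is correct and follows essentially the same approach as the paper: build a scalar spectral measure for $A$ as a weighted sum $\sum_n 2^{-n}\langle \xi_n,\1_B(A)\xi_n\rangle$ and then invoke \Cref{lem:antisym_J_A} for the symmetry. The only difference is cosmetic: the paper chooses the $\xi_n$ to be an orthonormal basis with $\J\xi_n=\xi_n$, which makes $\mu(-B)=\mu(B)$ hold directly without a separate symmetrization step, whereas you take an arbitrary dense sequence and average with the reflected measure afterward.
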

\begin{proof}
Let $(e_j)_{j=0}^\infty$ be an orthonormal basis of $\H$ such that $\J e_j=e_j$ for all $j\geq 0$ and define
\begin{equation*}
    \mu(B)=\sum_{j=0}^\infty 2^{-j}\langle e_j,\1_B(A)e_j\rangle.
\end{equation*}
The property $\mu(B)=0$ if and only if $\1_B(A)=0$ is easy to see, while $\mu(B)=\mu(-B)$ follows from the previous lemma.
\end{proof}
By the direct integral version of the spectral theorem \cite[Theorem 10.9]{Hal13}, the correspondence $\H$ admits a direct integral decomposition $\H= \int_{\IR}^\oplus \H_\omega\,d\mu(\omega)$ such that\begin{equation*}
\V_t=\int_\IR^\oplus e^{i\omega t}\,d\mu(\omega).
\end{equation*}
In particular, the algebra of diagonal operators on $\int_{\IR}^\oplus \H_\omega\,d\mu(\omega)$ is the von Neumann algebra generated by all operators of the form $f(A)$ with $f\colon\IR\to\IR$ bounded Borel.

\begin{lemma}
We have $\V_t\in \pi_l^\H(M)^\prime\cap \pi_r^\H(M)^\prime$ for all $t\in\IR$.
\end{lemma}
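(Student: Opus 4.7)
The plan is to verify directly that $\V_t$ commutes with both the left and right actions of $M$, by exploiting the fact that $\sigma^\phi_t$ is inner (implemented by $h^{it}$) and that the operators $h^{it}$ belong to $M$.

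First I would observe that since $h$ is a non-singular positive self-adjoint operator affiliated with $M$, each $h^{it}$ is a unitary in $M$. Consequently, for any $x,y\in M$ the elements $h^{-it}x$ and $yh^{it}$ again belong to $M$, so the defining compatibility of $\U_t$ with the bimodule structure (twisted by $\sigma^\phi$) can be applied.

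The key computation is then to unwind both sides of the desired equality $\V_t(x\xi y)=x\V_t(\xi)y$ using the definition of $\V_t$ and the identity $\sigma^\phi_t(a)=h^{it}ah^{-it}$. Concretely,
\begin{align*}
\V_t(x\xi y)
&=\U_t\bigl((h^{-it}x)\,\xi\,(yh^{it})\bigr)
=\sigma^\phi_t(h^{-it}x)\,\U_t(\xi)\,\sigma^\phi_t(yh^{it})\\
&=x\,h^{-it}\,\U_t(\xi)\,h^{it}\,y,
\end{align*}
while on the other hand
\begin{equation*}
x\,\V_t(\xi)\,y=x\,\U_t(h^{-it}\xi h^{it})\,y=x\,\sigma^\phi_t(h^{-it})\,\U_t(\xi)\,\sigma^\phi_t(h^{it})\,y=x\,h^{-it}\,\U_t(\xi)\,h^{it}\,y,
\end{equation*}
and the two expressions agree.

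There is no real obstacle here; the only thing one must be careful about is that $h$ itself need not be bounded, so one should explicitly note that the unitary group $(h^{it})$ sits in $M$ (hence in particular in the domain of the left and right actions) before applying the twisted bimodularity of $\U_t$. Once that is in place, the identity $\V_t\in\pi_l^\H(M)'\cap\pi_r^\H(M)'$ follows from the calculation above applied to arbitrary $x,y\in M$ and $\xi\in\H$.
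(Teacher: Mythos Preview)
Your proof is correct and takes essentially the same approach as the paper: both use that $\sigma^\phi_t$ is implemented by $h^{it}\in M$ and combine this with the twisted bimodularity of $\U_t$ to see that the conjugation by $h^{-it}$ exactly cancels the twist. The only cosmetic difference is that the paper treats the left action and leaves the right action as analogous, whereas you handle both simultaneously.
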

\begin{proof}
We only show the result for the left action. The proof for the right action is analogous. For $x\in M$, $\xi\in\H$ and $t\in\IR$ we have
\begin{equation*}
\V_t(x\xi)=\U_t(h^{-it}x\xi h^{it})=\U_t(\sigma^\phi_{-t}(x)h^{-it}\xi h^{it})=x\U_t(h^{-it}\xi h^{it})=x\V_t\xi.\qedhere
\end{equation*}
\end{proof}
By \cite[Corollary IV.8.16]{Tak79} it follows that the left and right action are decomposable representations in the sense that
\begin{align*}
\pi_r^\H(\cdot)&=\int_{\IR}^\oplus \pi_{l,\omega}^\H(\cdot)\,d\mu(\omega),\\
\pi_r^\H(\cdot) &=\int_{\IR}^\oplus \pi_{r,\omega}^\H(\cdot)\,d\mu(\omega),
\end{align*}
where for a.e. $\omega\in\IR$ the map $\pi_{l,\omega}^\H$ (resp. $\pi_{r,\omega}^\H$) is a normal unital $\ast$-homomorphism from $M$ (resp. $M^\op$) to $B(\H_\omega)$ and the images of $\pi_{l,\omega}^\H$ and $\pi_{r,\omega}^\H$ commute . In other words, $\H_\omega$ gets the structure of a correspondence from $M$ to itself and
\begin{align*}
(\U_t\xi)(\omega)=\V_t (\pi_l^\H(h^{it})\pi_r^\H(h^{it})\xi)(\omega)=e^{i\omega t}\pi_{l,\omega}^\H(h^{it})\pi_{r,\omega}^\H(h^{-it})\xi(\omega)
\end{align*}
for every $\xi\in \H$, $t\in \IR$ and a.e. $\omega\in \IR$. Hence $(\U_t)$ decomposes as a direct integral
\begin{equation*}
\U_t=\int_\IR^\oplus e^{i\omega t}\pi_{l,\omega}^\H(h^{it})\pi_{r,\omega}^\H(h^{-it})\,d\mu(\omega).
\end{equation*}

Thus, if we let $\tilde \H_0=\H_0$, $\tilde \H_\omega=\H_\omega\oplus\H_{-\omega}$ for $\omega>0$, we get a direct integral decomposition
\begin{equation*}
\H=\int_{\IR_+}^\oplus \tilde\H_\omega\,d\mu(\omega),
\end{equation*}
where $\IR_+=\{r\in \IR: r\geq 0\}$.
\begin{lemma}
With respect to the direct integral decomposition $\H=\int_{\IR_+}^\oplus \tilde\H_\omega\,d\mu(\omega)$, the operator $\J$ decomposes as $\J=\int_{\IR_+}^\oplus \tilde\J_\omega\,d\mu(\omega)$ such that for a.e. $\omega\in\IR_+$,
\begin{itemize}
    \item $\tilde\J_\omega$ is an anti-unitary involution,
    \item $\tilde \J_\omega(x\xi y)=y^\ast (\tilde\J_\omega\xi)y^\ast$ for all $x,y\in M$, $\xi\in \tilde\H_\omega$,
    \item $\tilde\J_\omega(\H_\omega)=\H_{-\omega}$.
\end{itemize}
\end{lemma}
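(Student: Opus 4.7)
The plan is to realize $\J$ as a decomposable antilinear operator relative to the regrouped direct integral $\H=\int_{\IR_+}^\oplus \tilde\H_\omega\,d\mu(\omega)$ and then read off the three listed properties fiberwise. The key observation is that while \Cref{lem:antisym_J_A} shows that $\J$ does \emph{not} commute with the original diagonal algebra over $\IR$ (rather, it intertwines $\1_B(A)$ with $\1_{-B}(A)$), it does commute in the antilinear sense with the subalgebra generated by functions of $\abs{A}$, which is precisely the diagonal algebra of the regrouped integral over $\IR_+$.

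To make this precise, I would first extend the Fourier-inversion computation of \Cref{lem:antisym_J_A} to complex-valued Schwartz functions, invoking antilinearity of $\J$, to obtain $\J f(A)=\bar f(-A)\J$ for every bounded Borel $f\colon\IR\to\IC$. Taking $f(\omega)=g(\abs{\omega})$ for a bounded Borel function $g\colon\IR_+\to\IC$ then yields $\J f(A)=f(A)^\ast\J$, which is the correct intertwining of an antiunitary with an element of the regrouped diagonal algebra. Applying the antilinear version of the direct-integral decomposition theorem---or, equivalently, choosing a measurable field of fiberwise conjugations $(C_\omega)$ on $\tilde\H_\omega$ coming from the basis $(e_j)$ used to define $\mu$ and decomposing the \emph{linear} operator $\J C$ in the standard way---produces a measurable field of antiunitary operators $(\tilde\J_\omega)_{\omega\in\IR_+}$ with $\J=\int_{\IR_+}^\oplus \tilde\J_\omega\,d\mu(\omega)$. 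The relations $\J^2=I$ and the isometry of $\J$ pass to the fibers and give the anti-unitary involution property almost everywhere.

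The swap property $\tilde\J_\omega(\H_\omega)=\H_{-\omega}$ then follows by a limiting argument: for $\omega>0$ and a shrinking sequence of Borel neighborhoods $B_n\subset\IR_+$ of $\omega$, the identity $\J\1_{B_n}(A)=\1_{-B_n}(A)\J$ transports under the regrouping to the statement that $\tilde\J_\omega$ exchanges the two summands $\H_\omega$ and $\H_{-\omega}$ of $\tilde\H_\omega$, with the case $\omega=0$ being vacuous. Finally, the bimodule reversal property follows from the decomposability of $\pi_l^\H$ and $\pi_r^\H$ established earlier in this section: the identity $\J(x\xi y)=y^\ast(\J\xi)x^\ast$ passes to the fibers for $x,y$ in a fixed $\sigma$-weakly dense countable subset of $M$, and extends to all of $M$ by normality together with separability of $\H$.

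The main delicacy I anticipate is the measurability bookkeeping for an antilinear decomposition; the conjugation trick above circumvents this and reduces everything to standard direct-integral theory together with routine fiberwise algebraic manipulations.
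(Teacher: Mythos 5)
Your proposal is correct and follows essentially the same route as the paper: both arguments identify the diagonal algebra of the regrouped integral with the von Neumann algebra generated by even Borel functions of $A$, use \Cref{lem:antisym_J_A} to show $\J$ commutes with it (in the appropriate antilinear sense), conclude decomposability, and then read off the three fiber properties from $\J\U_t=\U_t\J$, $\J A=-A\J$ and the bimodule relation for $\J$. The only difference is that you spell out the antilinear bookkeeping (the identity $\J f(A)=f(A)^\ast\J$ and the conjugation trick reducing to a linear decomposable operator), which the paper leaves implicit; this is a harmless and correct elaboration.
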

\begin{proof}
    The algebra of diagonal operators with respect to the direct integral decomposition $\H=\int_{\IR}^\oplus \tilde\H_\omega\,d\mu(\omega)$ coincides with the von Neumann algebra generated by all operators of the form $f(A)$ with $f\colon \IR\to \IR$ an even Borel function. It follows from \Cref{lem:antisym_J_A} that $\J f(A)=f(A)\J$ for every even Borel function $f\colon\IR\to\IR$. Thus $\J$ is decomposable.

    The first two bullet points follow from the corresponding properties of $\J$, while the last is a consequence of $ \J A=-A \J$.
\end{proof}
The last property means that with respect to the direct sum decomposition $\tilde\H_\omega=\H_\omega\oplus\H_{-\omega}$ for $\omega>0$, we can write $\tilde\J_\omega$ as
\begin{equation*}
\tilde\J_\omega=\begin{pmatrix}0&\J_{-\omega}\\\J_\omega&0\end{pmatrix}
\end{equation*}
with mutually inverse anti-unitary maps $\J_\omega\colon \H_\omega\to \H_{-\omega}$, $\J_{-\omega}\colon\H_{-\omega}\to\H_\omega$ that satisfy
\begin{equation*}
    \J_\omega\pi_{l,\omega}^\H(x^\ast)\pi_{r,\omega}^\H(y^{\mathrm{op}})=\pi_{l,-\omega}^\H(y^\ast)\pi_{r,-\omega}^\H(x^{\mathrm{op}})\J_\omega.
\end{equation*}

Now we focus on the case when $M$ is a type I factor. We first recall a well-known result on the structure of correspondences from $M$ to itself.

\begin{lemma}
If $M$ is a type I factor and $\H$ is a correspondence from $M$ to itself, then there exists a Hilbert space $H$ such that $\H\cong L^2(M)\otimes H$ with left and right action on the first tensor factor.
\end{lemma}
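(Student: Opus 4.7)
The plan is to reduce to the classification of normal representations of a type I factor, applied once for the left action and once for the right action.

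First, I would use the structure of type I factors to write $M \cong B(K)$ for some Hilbert space $K$. Under this identification, $L^2(M)$ is isomorphic (as an $M$-$M$-bimodule) to $K \otimes \overline K$, where the left action of $M = B(K)$ is on the first tensor factor and the right action of $M^{\op} \cong B(\overline K)$ is on the second tensor factor. This is just the Hilbert--Schmidt picture of the standard form of $B(K)$.

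Second, I would apply the classification of normal representations of $B(K)$ to the left action $\pi_l^\H\colon M \to B(\H)$: there exists a Hilbert space $H_1$ and a unitary $U\colon \H \to K \otimes H_1$ with $U \pi_l^\H(x) U^\ast = x \otimes \mathrm{id}_{H_1}$ for every $x \in B(K)$. The right action $\pi_r^\H(M^{\op})$ commutes with $\pi_l^\H(M)$, so under $U$ it is carried into the commutant of $B(K) \otimes \mathrm{id}_{H_1}$, which is $\mathrm{id}_K \otimes B(H_1)$. Hence the right action induces a normal unital $\ast$-homomorphism $\tilde\pi_r\colon M^{\op} \to B(H_1)$.

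Third, since $M^{\op} \cong B(\overline K)$ is again a type I factor, I would invoke the same classification a second time: there exist a Hilbert space $H$ and a unitary $V\colon H_1 \to \overline K \otimes H$ such that $V \tilde\pi_r(y^{\op}) V^\ast = y^{\op} \otimes \mathrm{id}_H$ for all $y \in M$. Composing, the unitary $(\mathrm{id}_K \otimes V) U\colon \H \to K \otimes \overline K \otimes H \cong L^2(M) \otimes H$ intertwines both the left and right actions of $M$ with the left and right actions on the first tensor factor of $L^2(M) \otimes H$, which is the desired conclusion.

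There is no real obstacle here; the only point requiring minor care is checking that the image of the right action, after the first unitary identification, is genuinely contained in the correct commutant so that the second application of the representation-theoretic classification is legitimate. Separability assumptions are not needed since the classification of normal representations of $B(K)$ holds without cardinality restrictions.
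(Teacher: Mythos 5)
Your argument is correct, and it takes a genuinely different route from the paper. The paper works on the level of von Neumann bimodules: it forms $F=\mathcal L(L^2(M)_M,\H_M)$, invokes Skeide's structure theorem for von Neumann $B(K)$-$B(K)$-modules to get $F\cong B(K,K\otimes H)$, and then recovers $\H$ as $F\overline\odot_M L^2(M)\cong \mathrm{HS}(K)\otimes H$. You instead stay on the Hilbert space $\H$ and apply the classification of normal unital representations of a type I factor twice --- first to the left action, obtaining $\H\cong K\otimes H_1$, then, after observing that the right action lands in the commutant $1_K\otimes B(H_1)$, to the induced representation of $M^{\op}\cong B(\overline K)$ on $H_1$. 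The two arguments are of course cousins (Skeide's result is itself proved via the representation theory of $B(K)$), but yours is more elementary and self-contained, avoiding the correspondence-versus-von-Neumann-module dictionary entirely; the paper's version has the advantage of fitting the module-theoretic framework it uses throughout and of producing the intermediate object $F\cong B(K,K\otimes H)$, which is the form in which the statement is quoted from \cite{Ske01}. The two points you flag --- that the image of the right action really sits in $1_K\otimes B(H_1)$, and that no separability is needed for the representation-theoretic classification --- are exactly the right things to check, and both go through; one should also note, as you implicitly do, that the representation of $M^{\op}$ on $\overline K$ coming from the identification $L^2(M)\cong K\otimes\overline K$ is the defining representation of $B(\overline K)$, so that the second application of the classification produces the correct right action on the first tensor factor.
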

\begin{proof}
    Let $M=B(K)$ and $F=\mathcal L(L^2(M)_{M};\H_{M})$. Recall that the trivial bimodule $L^2(B(K))$ can be identified with the space of Hilbert--Schmidt operators on $K$. By \cite[Example 3.3.4]{Ske01} there exists a Hilbert space $H$ such that $F\cong B(K,K\otimes H)$ as von Neumann bimodules over $M$, where the bimodule structure on $B(K,K\otimes H)$ is given by $x\cdot y\cdot z=(x\otimes 1)yz$ for $x,z\in M$, $y\in B(K,K\otimes H)$.
    
    It follows that
    \begin{equation*}
        \H\cong F\overline\odot_{M}L^2(M)\cong \mathrm{HS}(K,K\otimes H)\cong \mathrm{HS}(K)\otimes H\cong L^2(M)\otimes H,
    \end{equation*}
    where $\mathrm{HS}$ stands for the space of Hilbert--Schmidt operators. Here the first isomorphisms follows from the connection between correspondences and von Neumann bimodules discussed in Section \ref{sec:correspondences}, the second one is given by $x\otimes_M y\mapsto xy$, while the third and fourth are obvious.
\end{proof}

Therefore, if $M$ is a type I factor, $\phi$ a normal semi-finite faithful weight on $M$ and $(\H,\J,(\U_t))$ is a Tomita correspondence over $(M,\phi)$, then there exists a measurable field of Hilbert spaces $(H_\omega)_{\omega\in\IR}$ such that $\H_\omega\cong L^2(M)\otimes H_\omega$ as bimodules for a.e. $\omega\in\IR$. Again, we will suppress the isomorphism and simply write $\H_\omega=L^2(M)\otimes H_\omega$. It follows that
\begin{equation*}
\H=L^2(M)\otimes \int_{\IR}^\oplus H_\omega\,d\mu(\omega)
\end{equation*}
with left and right action on the first tensor factor and
\begin{equation*}
\U_t=\Delta_\phi^{it}\otimes\int_\IR^\oplus e^{i\omega t}\,d\mu(\omega).
\end{equation*}
Moreover, if we let $H=\int_{\IR}^\oplus H_\omega\,d\mu(\omega)$ and 
\begin{equation*}
    J\colon H\to H,\,(J\xi)(\omega)=\J_{-\omega}\xi(-\omega),  
\end{equation*}
then $\J=J_\phi\otimes J$.

To summarize, we have obtained the following characterization of separable Tomita correspondences over type I factors.
\begin{theorem}\label{thm:Tomita_corr_type_I}
If $M$ is a type I factor, $\phi$ a normal semi-finite faithful weight on $M$ and $(\H,\J,(\U_t))$ a separable Tomita correspondence over $(M,\phi)$, then there exists a Hilbert space $H$, an anti-unitary involution $J\colon H\to H$, a strongly continuous unitary group $(U_t)$ on $H$ satisfying $[J,U_t]=0$ for all $t\in \IR$ and a unitary bimodule map $V\colon \H\to L^2(M)\otimes H$ such that $V\J=(J_\phi\otimes J)V$ and $V\U_t=(\Delta_\phi^{it}\otimes U_t)V$.
\end{theorem}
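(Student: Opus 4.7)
The plan is to observe that the statement of \Cref{thm:Tomita_corr_type_I} is essentially the culmination of the constructions carried out in the preceding discussion, and the proof consists of collecting those pieces into a single unitary intertwiner $V$.

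First, since every type I factor is semi-finite, I would fix a normal semi-finite faithful trace $\tau$ on $M$ together with a non-singular positive self-adjoint operator $h$ affiliated with $M$ such that $\phi=\tau(h^{1/2}\,\cdot\,h^{1/2})$; this is exactly the setting in which the disintegration of $(\H,\J,(\U_t))$ was developed above. Applying that disintegration yields a symmetric Borel probability measure $\mu$ on $\IR$, a measurable field of correspondences $(\H_\omega)_{\omega\in\IR}$ from $M$ to itself with fibrewise left and right actions $\pi^\H_{l,\omega},\pi^\H_{r,\omega}$, and a measurable field of anti-unitaries $\J_\omega\colon\H_\omega\to \H_{-\omega}$ satisfying the appropriate twisted modularity, such that $\H=\int_{\IR}^{\oplus}\H_\omega\,d\mu(\omega)$,
\begin{equation*}
\U_t=\int_\IR^\oplus e^{i\omega t}\pi^\H_{l,\omega}(h^{it})\pi^\H_{r,\omega}(h^{-it})\,d\mu(\omega),
\end{equation*}
and $\J$ decomposes with fibres given by the $\J_\omega$'s.

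Second, I would apply the preceding lemma on correspondences over a type I factor, but in its fibrewise form, to obtain a measurable field of Hilbert spaces $(H_\omega)_{\omega\in\IR}$ together with unitary bimodule isomorphisms $V_\omega\colon\H_\omega\to L^2(M)\otimes H_\omega$ acting on the first tensor factor. Under these identifications, the bimodule operator $\pi^\H_{l,\omega}(h^{it})\pi^\H_{r,\omega}(h^{-it})$ becomes $\Delta_\phi^{it}\otimes\mathrm{id}_{H_\omega}$, since on $L^2(M)$ left multiplication by $h^{it}$ followed by right multiplication by $h^{-it}$ implements $\sigma_t^\phi$ and therefore coincides with $\Delta_\phi^{it}$. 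Setting $H=\int^\oplus_\IR H_\omega\,d\mu(\omega)$, $U_t=\int^\oplus_\IR e^{i\omega t}\,d\mu(\omega)\in B(H)$, and $(J\eta)(\omega)=\J_{-\omega}\eta(-\omega)$, the resulting $J$ is an anti-unitary involution on $H$ (using $\mu(-B)=\mu(B)$) and commutes with $(U_t)$ because the scalar multipliers $e^{i\omega t}$ flip sign under $\omega\mapsto -\omega$ exactly as $J$ does. Defining $V=\int^\oplus_\IR V_\omega\,d\mu(\omega)$ then produces a unitary bimodule map $V\colon\H\to L^2(M)\otimes H$, and the intertwining relations $V\J=(J_\phi\otimes J)V$ and $V\U_t=(\Delta_\phi^{it}\otimes U_t)V$ are immediate from the fibrewise description combined with the identification $\pi^\H_{l,\omega}(h^{it})\pi^\H_{r,\omega}(h^{-it})\leftrightarrow \Delta_\phi^{it}\otimes\mathrm{id}$.

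The main obstacle in the above plan is not the algebraic verification but the measurability of the identifications $V_\omega\colon \H_\omega\to L^2(M)\otimes H_\omega$, i.e. upgrading the fibrewise isomorphisms to an isomorphism of direct integrals. This requires a measurable choice of the Hilbert space $H_\omega$ appearing in Skeide's characterization of correspondences over a type I factor, which I would handle by first exhibiting a fixed unitary bimodule isomorphism $W\colon \H\to L^2(M)\otimes H$ using the global (non-fibered) version of the lemma, and then observing that the direct integral decomposition of $\H$ induced by the bimodule-equivariant unitary group $(\V_t)$ transfers, through $W$, to a direct integral decomposition of $L^2(M)\otimes H$ by subspaces of the form $L^2(M)\otimes H_\omega$ for a measurable field $(H_\omega)$ on the second tensor factor (this uses that bimodule projections in $B(L^2(M)\otimes H)$ are exactly of the form $1\otimes p$ since $M$ is a factor). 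With that in hand, the $V_\omega$'s assemble measurably and the theorem follows.
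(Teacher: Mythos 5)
Your proposal is correct and follows essentially the same route as the paper: disintegrate $\H$ along the corrected unitary group $\V_t=\U_t(h^{-it}\,\cdot\,h^{it})$ with respect to a symmetric measure $\mu$, apply the type I structure lemma fibrewise, and reassemble $J$, $(U_t)$ and $V$ from the fibre data. The one point where you go beyond the paper is the measurability of the fibrewise identifications $\H_\omega\cong L^2(M)\otimes H_\omega$ (which the paper simply asserts); your fix --- transporting the diagonal algebra of the $\V$-decomposition through a global isomorphism $W\colon\H\to L^2(M)\otimes H$ and using that bimodule projections there lie in $(\pi_l(M)\vee\pi_r(M^{\op}))'=1\otimes B(H)$ because $M$ is a factor in standard form --- is valid and tightens that step.
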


\section{Braided Twists}\label{sec:twists}

In this section we discuss the twists we use to define our operator-valued twisted Araki--Woods algebras. It is natural to assume that these twists are bimodule maps, which makes the class of twists more restricted compared to the scalar-valued case. This explains why there seem to be no natural fermionic and bosonic Fock bimodules (or more generally $q$-deformed Fock bimodules) without further assumptions on the underlying correspondence, as has been observed already in the mathematical physics literature \cite{Ske98,Vas24}.

In our conditions on the twists, we do not aim for the highest generality, but rather for a convenient class of twists for which the modular data can be readily computed from the data of the Tomita correspondence and which includes several interesting families of examples.

Assume that $\H$ is a correspondence from $M$ to itself. Given a bounded bimodule map $\T\colon \H\otimes_M\H\to \H\otimes_M\H$, we define the bounded bimodule maps $\T_{k,n}$ and $R_{\T,n}$ on $\H^{\otimes_M n}$ by
\begin{align*}
    \T_{k,n}&=1^{\otimes_M (k-1)}\otimes_M \T\otimes_M 1^{\otimes_M (n-k-1)},\\
    R_{\T,n}&=1+\T_{1,n}+\T_{1,n}\T_{2,n}+\dots+\T_{1,n}\dots\T_{n-1,n}.
\end{align*}
If $n$ is irrelevant in the context, we simply write $\T_k$ instead of $\T_{k,n}$ for $n\geq k+1$.

Moreover, we define inductively,
\begin{align*}
    P_{\T,1}&=1,\\
    P_{\T,n+1}&=(1\otimes_M P_{\T,n})R_{\T,n+1}.
\end{align*}

\begin{definition}
    A \emph{twist} is a contractive bimodule map $\T$ on $\H\otimes_M \H$ such that $P_{\T,n}\geq 0$ for all $n\in\mathbb N$. A \emph{strict twist} is a twist such that $P_{\T,n}$ is additionally invertible for all $n\in\mathbb N$.

    A twist $\T$ is called \emph{braided} if it satisfies the Yang--Baxter equation
    \begin{equation*}
        \T_1\T_2\T_1=\T_2\T_1\T_2.
    \end{equation*}
\end{definition}

\begin{lemma}\label{lem:YBE_twist}
If $\T$ is a self-adjoint contractive bimodule map on $\H\otimes_M \H$ that satisfies the Yang--Baxter equation $\T_1\T_2\T_1=\T_2\T_1\T_2$, then $\T$ is a braided twist.
\end{lemma}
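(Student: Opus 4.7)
The lemma reduces to showing $P_{\T,n}\geq 0$ for all $n\in\mathbb N$, as contractivity of $\T$ is already a hypothesis. This is the bimodule analog of the classical Bo\.{z}ejko--Speicher positivity theorem for the quasi-symmetrizer of a Hilbert-space Yang--Baxter operator. The plan is to transport their argument to the present setting; since all computations take place inside the $\ast$-algebra of bounded bimodule maps on $\H^{\otimes_M n}$ -- which is stable under products and adjoints -- no step of the scalar proof uses Hilbert-space-specific ingredients that would fail for bimodules.

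The first step is to extract the combinatorial structure of $P_{\T,n}$. Because $\T_{i,n}$ acts nontrivially only on the tensor slots $i$ and $i+1$, the far commutation $\T_{i,n}\T_{j,n}=\T_{j,n}\T_{i,n}$ for $\abs{i-j}\geq 2$ is automatic, and combined with the Yang--Baxter equation $\T_k\T_{k+1}\T_k=\T_{k+1}\T_k\T_{k+1}$ these are exactly the braid relations of the symmetric group $S_n$. Matsumoto's theorem then makes $\T^\pi:=\T_{i_1,n}\cdots\T_{i_k,n}$ well-defined for each $\pi\in S_n$, independently of the chosen reduced expression $\pi=s_{i_1}\cdots s_{i_k}$. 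The elements $\tau_k:=s_1s_2\cdots s_{k-1}$, $k=1,\dots,n+1$, are minimal-length right-coset representatives for $S_{n+1}$ modulo the stabilizer of $1$, and their associated operators sum to exactly $R_{\T,n+1}$. Using the recursion $P_{\T,n+1}=(1\otimes_M P_{\T,n})R_{\T,n+1}$, a straightforward induction on $n$ yields the closed form
\[
  P_{\T,n}=\sum_{\pi\in S_n}\T^\pi.
\]
Self-adjointness of $P_{\T,n}$ is then immediate: $(\T^\pi)^\ast=\T^{\pi^{-1}}$ because reversing a reduced expression for $\pi$ produces one for $\pi^{-1}$, and the involution $\pi\mapsto\pi^{-1}$ on $S_n$ leaves the sum invariant.

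The hard part is positivity, which I would establish by induction on $n$. The base case $P_{\T,1}=1\geq 0$ is trivial. For the inductive step, one establishes a second factorization of $P_{\T,n+1}$ coming from the dual coset decomposition (isolating the first rather than the last tensor slot), and then combines the two factorizations via the Yang--Baxter equation and the norm bound $\norm{\T}\leq 1$ to reduce $P_{\T,n+1}\geq 0$ to the inductive hypothesis $P_{\T,n}\geq 0$. This is the core of Bo\.{z}ejko--Speicher's original argument and the main obstacle of the proof; but since every manipulation is purely algebraic -- invoking only Yang--Baxter, far commutation, self-adjointness of $\T$, and the norm inequality $\norm{\T}\leq 1$ -- it carries over verbatim to the algebra of bounded bimodule maps on $\H^{\otimes_M n}$ without requiring any new ideas.
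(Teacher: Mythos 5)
Your proposal is correct and follows essentially the same route as the paper: both identify $P_{\T,n}$ with $\sum_{\sigma\in S_n}\Phi(\sigma)$ for the quasi-multiplicative extension $\Phi$ (well-defined by the Yang--Baxter/braid relations) via the Coxeter coset decomposition, and then appeal to the Bo\.{z}ejko--Speicher positivity theorem, which is a general statement about self-adjoint contractive operators on a Hilbert space and hence applies verbatim to bimodule maps on $\H^{\otimes_M n}$. The paper simply cites \cite[Theorem 2.2]{BS94} for the positivity step where you sketch re-running its induction, but the mathematical content is identical.
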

\begin{proof}
This is a special case of \cite[Theorem 2.2]{BS94}. Let $\pi_i\in S_n$ denote the transposition between $i$ and $i+1$ and define a map $\Phi\colon S_n\to B(\H^{\otimes_M n})$ by $\Phi(e)=1$ and $\Phi(\sigma)=\T_{\pi_{i(1)}}\dots\T_{\pi_i(k)}$ whenever $\sigma=\pi_{i(1)}\dots \pi_{i(k)}$ is a reduced word. As explained in \cite[Theorem 1.1]{BS94}, this map is well-defined since $\T$ satisfies the Yang--Baxter equation.

By \cite[Theorem 2.2]{BS94} the operator $\sum_{\sigma\in S_n}\Phi(\sigma)$ is positive.  Moreover, $P_{\T,n}=\sum_{\sigma\in S_n}\Phi(\sigma)$ follows from the Coxeter decomposition of $S_n$ as in the proof of \cite[Theorem 3.1]{BS94}.
\end{proof}

Given a twist $\T$ on $\H\otimes_M \H$, we define a positive semi-definite sesquilinear form on $\H^{\otimes_M n}$ by
\begin{equation*}
    \langle \xi,\eta\rangle_{\T,n}=\langle \xi,P_{\T,n}\eta\rangle.
\end{equation*}
Let $\H_{\T,n}$ denote the Hilbert space obtained from $\H^{\otimes_M n}$ after separation and completion with respect to this inner product. Note that since $P_{\T,n}$ is a bimodule map, $\H_{\T,n}$ inherits the structure of a correspondence from $M$ to itself from $\H^{\otimes_M n}$.

\begin{definition}
The \emph{twisted Fock bimodule} $\F_{\T}(\H)$ over $\H$ is defined as
\begin{equation*}
    \F_{\T}(\H)=L^2(M)\oplus\bigoplus_{n=1}^\infty \H_{\T,n}.
\end{equation*}
\end{definition}
In the case $\T=0$, this is the Hilbert completion of the Fock bimodule over the von Neumann bimodule $\L(L^2(M),\H_M)$, which was introduced by Pimsner \cite{Pim97}.

\begin{remark}
In the scalar-valued case, the most studied examples of twists in this context are the ones that produce the (mixed) $q$-Gaussian algebras. They all involve the flip map on $H\otimes H$. In the operator-valued case, a naive definition of the flip map fails: There is in general no bounded linear map on $\H\otimes_\phi\H$ that maps $\xi\otimes_\phi\eta$ to $\eta\otimes_\phi\xi$ for all bounded vectors $\xi,\eta\in\H$, let alone a bimodule map.

This is one of the reasons why we work with general twists instead of trying to define an operator-valued version of (mixed) $q$-Gaussian algebras. Nevertheless, for specific $\H$ one can still define an analog of the flip map. We will exhibit some instances of such correspondences in the examples below.
\end{remark}

\begin{example}\label{ex:twist_trivial_bimod}
If $\H$ is a direct sum of copies of the trivial bimodule, say $\H=L^2(M)\otimes\ell^2(I)$ with left and right action $x(\xi\otimes e_i)y=x\xi y\otimes e_i$ for the canonical orthonormal basis $(e_i)$ of $\ell^{2}(I)$, we have $\H\otimes_\phi\H\cong L^2(M)\otimes \ell^2(I)\otimes\ell^2(I)$ via
\begin{equation*}
(\Lambda_\phi(x)\otimes e_i)\otimes_\phi (\Lambda_\phi(y)\otimes e_j)\mapsto \Lambda_\phi(xy)\otimes e_i\otimes e_j.
\end{equation*}
If $T$ is a twist on $\ell^2(I)\otimes\ell^2(I)$, let $\T=1_{L^2(M)}\otimes T$. If we identify $\H^{\otimes_\phi n}$ with $L^2(M)\otimes\ell^2(I)^{\otimes n}$ analogous to the case $n=2$, it is easy to see that $\T_{k,n}=1_{L^2(M)}\otimes T_{k,n}$ and $P_{\T,n}=1_{L^2(M)}\otimes P_{T,n}$. Thus $\T$ is a twist, and it is a strict twist if and only if $T$ is a strict twist. Moreover, $\T$ is braided if and only if $T$ is braided.

In particular, if $q_{ij}\in[-1,1]$, we can take
\begin{equation*}
T\colon \ell^2(I)\otimes \ell^2(I)\to \ell^2(I)\otimes \ell^2(I),\,T(e_i\otimes e_j)=q_{ij}e_j\otimes e_i.
\end{equation*}
This is the twist defining the mixed $q$-Gaussian algebras in the scalar-valued case.
\end{example}

\begin{example}\label{ex:twist_group_alg}
Let $H$ be a Hilbert space with anti-unitary involution $J$ and let $\pi$ be a unitary representation of a (discrete countable) group $G$ on $H$ such that $[\pi(g),J]=0$ for all $g\in G$. In other words, $\pi$ is the complexification of an orthogonal action of $G$ on the real Hilbert space $H^J$.

One can turn $\ell^2(G)\otimes H$ into a correspondence $\H_\pi$ from $L(G)$ to itself by defining the left and right action as
\begin{align*}
\lambda_g\cdot(\delta_h\otimes \xi)&=\delta_{gh}\otimes \pi(g)\xi,\\
(\delta_h\otimes \xi)\cdot\lambda_g&=\delta_{hg}\otimes\xi.
\end{align*}
Inductively one can show that 
\begin{equation*}
    V_n\colon \H_\pi^{\otimes_\tau n}\to \ell^2(G)\otimes H^{\otimes n},\,(\delta_{g_1}\otimes\xi_1)\otimes_\tau\dots\otimes_\tau(\delta_{g_n}\otimes \xi_n)\mapsto \delta_{g_1\dots g_n}\otimes \pi(g_1)\xi_2\otimes\dots\otimes \pi(g_1\dots g_{n-1})\xi_n
\end{equation*}
is unitary. Moreover, $V_n$ becomes a bimodule map if $\ell^2(G)\otimes H^{\otimes n}$ is endowed with the left and right action given by
\begin{align*}
\lambda_g\cdot(\delta_h\otimes\xi_1\otimes\dots\otimes\xi_n)&=\delta_{gh}\otimes \pi(g)\xi_1\otimes\dots\pi(g)\xi_n,\\
(\delta_h\otimes\xi_1\otimes\dots\otimes\xi_n)\cdot\lambda_g&=\delta_{hg}\otimes\xi_1\otimes\dots\otimes\xi_n.
\end{align*}
Similar to the previous example, if $(e_i)$ is an orthonormal basis of $H$ and $T$ is the twist on $H\otimes H$ given by $T(e_i\otimes e_j)=q_{ij}e_j\otimes e_i$, then $1_{\ell^2(G)}\otimes T$ is a twist on $\ell^2(G)\otimes H\otimes H\cong \H_\pi^{\otimes_\tau 2}$. More generally, instead of $T$ we can take any twist $S$ on $H\otimes H$ that is equivariant in the sense that $[S,\pi(g)\otimes \pi(g)]=0$ for all $g\in G$, and $1_{\ell^2(G)}\otimes S$ is braided if $S$ is braided.
\end{example} 



\begin{example}\label{ex:twist_centered_bimod}
The center $\mathcal{Z}_M(F)$ of a bimodule $F$ over $M$ is $\mathcal Z_M(F)=\{\xi\in F\mid x\xi=\xi x\text{ for all }x\in M\}$. A von Neumann bimodule $F$ over $M$ is called \emph{centered} if $\mathcal Z_M(F)$ generates $F$ as a von Neumann bimodule (see \cite[Section 3.4]{Ske01}). We also call a correspondence $\H$ from $M$ to itself centered if the von Neumann bimodule $\mathcal L(L^2(M)_M,\H_M)$ is centered. Note that the center of $\mathcal L(L^2(M)_M,\H_M)$ is the set of all bounded bimodule maps from $L^2(M)$ to $\H$.

By \cite[Corollary 3.4.11]{Ske01}, every centered correspondence from $M$ to itself is of the form $\bigoplus_i p_i L^2(M)$ with central projections $p_i\in M$. In particular, if $M$ is a factor, then every centered correspondence from $M$ to itself is a direct sum of copies of the trivial bimodule.

If $F$ is a centered von Neumann bimodule, then there exists a unique unitary bimodule map $\sigma$ on $F\overline\odot_M F$, called the \emph{flip}, such that 
\begin{equation*}
    \sigma(S\otimes T)=T\otimes S
\end{equation*}
for $S,T\in\mathcal Z_M(F)$ (see \cite[Proposition 8.1.1]{Ske01}). 

Let $\H$ be a centered correspondence from $M$ to itself and $F=\L(L^2(M)_M,\H_M)$. In this case, $\sigma\otimes \mathrm{id}_{L^2(M)}$ extends to a unitary bimodule map on $F\overline\odot_M F\overline\odot_M L^2(M)$. Under the canonical identification $F\overline\odot_M F\overline\odot_M L^2(M)\cong \H\otimes_M \H$ discussed in Section \ref{sec:rel_tensor_prod}, one gets a unitary bimodule map $\T$ on $\H\otimes_M\H$, which we also call the flip.

The verification of the Yang--Baxter equation in this case follows along the same lines as in the scalar-valued case. Thus that $q\T$ is a braided twist for $q\in [-1,1]$ by \Cref{lem:YBE_twist}. In the case $q=1$, the twisted Fock bimodule $\F_\T(\H)$ coincides with the Hilbert completion of the symmetric Fock bimodule studied by Skeide (see \cite{Ske98}, \cite[Chapter 8]{Ske01}).
\end{example}

In the case when $(\H,\J,(\U_t))$ is a Tomita correspondence over $(M,\phi)$, we need some more compatibility conditions of the twist. In this case, it is more convenient to work with the weight-dependent formulation of the relative tensor product discussed in Section \ref{sec:rel_tensor_prod}.

With this formulation, it is not hard to see that despite they are not bimodule maps, both $\J$ and $(\U_t)$ can be extended to the relative tensor powers $\H^{\otimes_\phi n}$.

\begin{lemma}
    Let $(\H,\J,(\U_t))$ be a Tomita correspondence over $(M,\phi)$. There exists a unique anti-unitary involution $\J^{(2)}$ and a unique strongly continuous unitary group $(\U_t^{(2)})$ on $\H\otimes_\phi \H$ such that
    \begin{equation*}
        \J^{(2)}(\xi\otimes_\phi\eta)=\J\xi\otimes_\phi\J\eta
    \end{equation*}
    for all left $\phi$-bounded $\xi\in\mathcal H$ and right $\phi$-bounded $\eta\in \H$, and
    \begin{equation*}
        \U_t^{(2)}(\xi\otimes_\phi\eta)=\U_t\xi\otimes_\phi\U_t\eta
    \end{equation*}
    for all left $\phi$-bounded $\xi\in \H$ and $\eta\in\H$.

    Moreover, $(\H\otimes_\phi\H,\J^{(2)},(\U_t^{(2)}))$ is a Tomita correspondence.
\end{lemma}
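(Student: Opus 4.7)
The plan is to define $\J^{(2)}$ and $(\U_t^{(2)})$ on suitable dense subspaces of $\H \otimes_\phi \H$ by the stated formulas, verify the required (anti-)isometry, extend by continuity, and then check the Tomita correspondence axioms.

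For $(\U_t^{(2)})$, the first step is to record that $\U_t$ preserves left $\phi$-boundedness in a controlled way, namely
\[
L_\phi(\U_t \xi) = \U_t L_\phi(\xi) \Delta_\phi^{-it},
\]
which follows by a direct computation from $\U_t(\xi \cdot z) = (\U_t\xi) \cdot \sigma_t^\phi(z)$ combined with the modular identity $\Lambda_\phi' \circ \sigma_{-t}^\phi = \Delta_\phi^{-it} \Lambda_\phi'$ on $\n_\phi^\ast$. Hence the assignment $\xi \otimes_\phi \eta \mapsto \U_t\xi \otimes_\phi \U_t\eta$ makes sense on the dense subspace spanned by $\xi \otimes_\phi \eta$ with $\xi$ left $\phi$-bounded. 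Isometry then follows from
\[
L_\phi(\U_t\xi_1)^\ast L_\phi(\U_t \xi_2) = \Delta_\phi^{it}(L_\phi(\xi_1)^\ast L_\phi(\xi_2))\Delta_\phi^{-it} = \sigma_t^\phi(L_\phi(\xi_1)^\ast L_\phi(\xi_2))
\]
inside $M \subset B(L^2(M))$, combined with the intertwining $\U_t(a\eta) = \sigma_t^\phi(a) \U_t\eta$ for $a \in M$. Density yields a unitary $\U_t^{(2)}$ on $\H \otimes_\phi \H$; the group property and strong continuity transfer from $(\U_t)$ by a standard density argument.

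For $\J^{(2)}$ I would work on the dense subspace generated by $\xi \otimes_\phi \eta$ with both $\xi,\eta \in L^\infty({}_M\H_M,\phi)$. Since $\J$ swaps left- and right-boundedness, it restricts to a bijection of doubly bounded vectors, so that $\J\xi \otimes_\phi \J\eta$ is meaningful once interpreted through the equivalent formulations of the relative tensor product recorded in Section~\ref{sec:rel_tensor_prod}. The intertwining of $\J$ with $J_\phi$, $L_\phi$ and $R_\phi$ then makes anti-isometry into a direct computation, while the involution property $(\J^{(2)})^2 = \id$ and the commutation $\J^{(2)}\U_t^{(2)} = \U_t^{(2)}\J^{(2)}$ reduce immediately to $\J^2 = \id$ and $\J\U_t = \U_t\J$, respectively.

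Finally, the Tomita correspondence axioms for $(\H \otimes_\phi \H, \J^{(2)}, (\U_t^{(2)}))$ are verified on elementary tensors using the bimodule structure: for instance
\[
\U_t^{(2)}(x(\xi \otimes_\phi \eta)y) = \U_t(x\xi) \otimes_\phi \U_t(\eta y) = \sigma_t^\phi(x) \U_t^{(2)}(\xi \otimes_\phi \eta)\sigma_t^\phi(y),
\]
and the analogous computation yields $\J^{(2)}(x(\xi \otimes_\phi \eta)y) = y^\ast \J^{(2)}(\xi \otimes_\phi \eta) x^\ast$ from the bimodular relation for $\J$. The main obstacle I anticipate is the careful bookkeeping required to interpret $\J\xi \otimes_\phi \J\eta$ correctly, since $\J$ swaps the two notions of boundedness while the relative tensor product is defined asymmetrically in terms of left-bounded vectors; once this is handled by passing between the equivalent pictures of the relative tensor product from Section~\ref{sec:rel_tensor_prod}, all remaining verifications are routine consequences of the Tomita axioms for $(\H,\J,(\U_t))$ together with elementary modular theory.
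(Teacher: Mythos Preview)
Your approach is essentially the same as the paper's. For $\U_t^{(2)}$ the paper uses exactly the identity $L_\phi(\U_t\xi_1)^\ast L_\phi(\U_t\xi_2)=\sigma^\phi_t(L_\phi(\xi_1)^\ast L_\phi(\xi_2))$ that you derive, and for $\J^{(2)}$ it works with $\xi$ left-bounded and $\eta$ right-bounded (rather than both doubly bounded) and carries out the anti-isometry computation explicitly via the right-bounded description of the relative tensor product inner product (\cite[Proposition IX.3.15]{Tak03}), which is precisely the ``passing between equivalent pictures'' you anticipate as the main bookkeeping step.
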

\begin{proof}
    From the definition of a Tomita correspondence it is easy to see that $\J$ maps left bounded to right bounded vectors, and $R_\phi(\J\xi)=\J L_\phi(\xi)\J$ for left bounded $\xi$ (see \cite[Lemma 4.7]{Wir22b}). Moreover, since right bounded vectors are dense in $\H$ by \cite[Lemma IX.3.3]{Tak03}, the linear span of vectors of the form $\xi\otimes_\phi\eta$ with $\xi$ left bounded and $\eta$ right bounded is dense in $\H\otimes_\phi\H$.

    For $\xi_1,\dots,\xi_n\in L^\infty(\H_M,\phi)$, $\xi_1,\dots,\eta_n\in L^\infty(_M\H,\phi)$ we have
    \begin{align*}
        \left\lVert\sum_{k=1}^n \J\eta_k\otimes_\phi\J\xi_k \right\rVert^2&=\sum_{j,k=1}^n \langle \J\xi_j,L_\phi(\J\eta_j)^\ast L_\phi(\J\eta_k)\cdot \J\xi_k\rangle_\H\\
        &=\sum_{j,k=1}^n \langle \J\xi_j,\J R_\phi(\eta_j)^\ast R_\phi(\eta_k)\J\cdot \J\xi_k\rangle_\H\\
        &=\sum_{j,k=1}^n \langle\J\xi_j,\J(\xi_k\cdot \J R_\phi(\eta_k)^\ast R_\phi(\eta_j)\J)\rangle_\H\\
        &=\sum_{j,k=1}^n \langle \xi_k\cdot \J R_\phi(\eta_k)^\ast R_\phi(\eta_j)\J,\xi_j\rangle_\H.
    \end{align*}
    By \cite[Proposition IX.3.15]{Tak03}, the last line equals $\sum_{j,k}\langle \xi_k\otimes \eta_k,\xi_j\otimes \eta_j\rangle_\H$. Thus $\J^{(2)}$ is isometric. 

    For $\U_t^{(2)}$ we have
    \begin{align*}
        \left\lVert\sum_{k=1}^n \U_t\xi_j\otimes\U_t\eta_k\right\rVert^2
        &=\sum_{j,k=1}^n \langle \U_t\eta_j,L_\phi(\U_t \xi_j)^\ast L_\phi(\U_t\xi_k)\cdot\U_t\eta_k\rangle_\H\\
        &=\sum_{j,k=1}^n \langle \U_t \eta_j, \sigma_\phi^{it}(L_\phi(\xi_j)^\ast L_\phi(\xi_k))\cdot \U_t \eta_k\rangle_\H\\
        &=\sum_{j,k=1}^n \langle \U_t \eta_j,\U_t(L_\phi(\xi_j)^\ast L_\phi(\xi_k)\cdot \eta_k)\rangle_\H\\
        &=\sum_{j,k=1}^n \langle \eta_j,L_\phi(\xi_j)^\ast L_\phi(\xi_k)\cdot\eta_k\rangle_\H\\
        &=\left\lVert \sum_{k=1}^n \xi_k\otimes\eta_k\right\rVert^2,
    \end{align*}
where we used the identity $L_\phi(\U_t\xi_j)^\ast L_\phi(U_t\xi_k)=\sigma^\phi_t(L_\phi(\xi_j)^\ast L_\phi(\xi_k))$ from \cite[Lemma 4.7]{Wir22b} for the second equality. Thus $\U_t^{(2)}$ is isometric. The remaining claims are easy to check.
\end{proof}

As a consequence, we can inductively define $\J^{(n)}$ and $(\U_t^{(n)})$ on the relative tensor power $\H^{\otimes_\phi n}$ such that $(\H^{(n)},\J^{(n)},(\U_t^{(n)}))$ becomes a Tomita correspondence. If $\xi_1,\dots,\xi\in \H$ are bounded vectors, then
\begin{align*}
    \U_t^{(n)}(\xi_1\otimes_\phi\dots\otimes_\phi\xi_n)&=\U_t\xi_1\otimes_\phi\dots\otimes_\phi\U_t\xi_n,\\
    \J^{(n)}(\xi_1\otimes_\phi\dots\otimes_\phi\xi_n)&=\J\xi_n\otimes_\phi\dots\otimes_\phi\J\xi_1.
\end{align*}

For a twist $\T$ on $\H\otimes_\phi\H$ let $\tilde R_{\T,n}=1+\T_{n-1}+\T_{n-1}\T_{n-2}+\dots+\T_{n-1}\dots \T_1$. The following result is a consequence of the Coxeter representation of the symmetric group with transpositions as generators. The proof of \cite[Lemma 3.16]{CL23} carries over to our setting.

\begin{lemma}\label{lem:inn_prod_braided_twist}
    If $\T$ is a braided twist on $\H\otimes_\phi\H$, then
    \begin{equation*}
        P_{\T,n+1}=(P_{\T,n}\otimes_\phi 1)\tilde R_{\T,n+1}
    \end{equation*}
    for all $n\in\mathbb N$.
\end{lemma}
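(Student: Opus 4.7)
The plan is to appeal to the identity $P_{\T,m} = \sum_{\sigma \in S_m} \Phi(\sigma)$ (established en route to the proof of \Cref{lem:YBE_twist}), where $\Phi \colon S_m \to \L(\H^{\otimes_\phi m})$ is the multiplicative map sending a reduced word $\pi_{i(1)} \cdots \pi_{i(k)}$ to $\T_{i(1)} \cdots \T_{i(k)}$; because $\T$ is braided, $\Phi$ is well-defined on reduced words by Matsumoto's theorem. Once this reformulation is in hand, both the recursive definition of $P_{\T,n+1}$ and the asserted alternative expression reduce to coset decompositions of $S_{n+1}$.

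More concretely, I would embed $S_n$ into $S_{n+1}$ as the stabilizer of the position $n+1$, so that $\Phi$ restricted to this copy of $S_n$ acts only on the first $n$ tensor factors; in particular $\sum_{\sigma \in S_n} \Phi(\sigma) = P_{\T,n} \otimes_\phi 1$. The minimal-length right coset representatives for $S_n$ in $S_{n+1}$, classified by the value of $\tau^{-1}(n+1) \in \{1, \dots, n+1\}$, are $\tau_k = \pi_n \pi_{n-1} \cdots \pi_k$ for $k = 1, \dots, n$ together with $\tau_{n+1} = e$, each of length $n+1-k$. A direct computation then gives
\begin{equation*}
    \sum_{k=1}^{n+1} \Phi(\tau_k) = 1 + \T_n + \T_n \T_{n-1} + \cdots + \T_n \T_{n-1} \cdots \T_1 = \tilde R_{\T,n+1}.
\end{equation*}
Standard Coxeter theory yields the length-additivity $\ell(\sigma \tau_k) = \ell(\sigma) + \ell(\tau_k)$ for $\sigma \in S_n$ and hence $\Phi(\sigma \tau_k) = \Phi(\sigma) \Phi(\tau_k)$, so summing over all $\sigma \in S_{n+1}$ gives
\begin{equation*}
    P_{\T,n+1} = \sum_{\sigma \in S_{n+1}} \Phi(\sigma) = \left(\sum_{\sigma \in S_n}\Phi(\sigma)\right)\left(\sum_{k=1}^{n+1} \Phi(\tau_k)\right) = (P_{\T,n} \otimes_\phi 1)\, \tilde R_{\T,n+1},
\end{equation*}
which is the desired identity.

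The only genuinely delicate ingredient is the well-definedness of $\Phi$ on reduced words, which hinges on the Yang--Baxter equation and is already handled in the proof of \Cref{lem:YBE_twist}. The remaining arguments---coset decomposition, identification of minimal representatives, length additivity---live entirely in the symmetric group and are insensitive to whether the underlying twist takes scalar or operator values, so the scalar-valued proof of \cite[Lemma 3.16]{CL23} transfers without change.
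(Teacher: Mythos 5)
Your proof is correct and follows essentially the same route as the paper, which simply notes that the identity is a consequence of the Coxeter representation of $S_{n+1}$ and defers to \cite[Lemma 3.16]{CL23}; you have just written out that Coxeter-theoretic argument (quasi-multiplicativity of $\Phi$ via the Yang--Baxter relation, the right-coset decomposition of $S_{n+1}$ over the stabilizer of $n+1$, and length additivity for minimal coset representatives) in full. The only point worth noting is that the identity $P_{\T,m}=\sum_{\sigma\in S_m}\Phi(\sigma)$ is stated in the proof of \Cref{lem:YBE_twist} for self-adjoint $\T$, but its derivation uses only the Yang--Baxter equation, so it is available for any braided twist as you implicitly assume.
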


\begin{lemma}
    \begin{enumerate}[(a)]
        \item If $[\T,\J^{(2)}]=0$, then $\J^{(n)}\T_k=\T_{n-k}\J^{(n)}$ for all $n\in\mathbb N$, $k\leq n-1$. In particular, $\J^{(n)}\tilde R_{\T,n}= R_{\T,n}\J^{(n)}$ for all $n\in\mathbb N$.
        \item If additionally $\T$ is braided, then $[P_{\T,n},\J^{(n)}]=0$ for all $n\in\mathbb N$.
    \end{enumerate}
\end{lemma}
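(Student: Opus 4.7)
For part (a), I would compute directly on simple tensors $\xi_1 \otimes_\phi \dots \otimes_\phi \xi_n$ of bounded vectors, which span a dense subspace by the preceding lemma on the extension of $\J$ and $(\U_t)$ to tensor powers. Since $\J^{(n)}(\xi_1 \otimes_\phi \dots \otimes_\phi \xi_n) = \J\xi_n \otimes_\phi \dots \otimes_\phi \J\xi_1$ simply reverses the order of the factors, both $\J^{(n)}\T_k$ and $\T_{n-k}\J^{(n)}$ act as the identity on all slots except those originating from positions $k, k+1$, which after reversal sit at positions $n-k+1, n-k$. The verification on these two slots reduces to $\J^{(2)}\T(\xi_k \otimes_\phi \xi_{k+1}) = \T\J^{(2)}(\xi_k \otimes_\phi \xi_{k+1})$; care is needed because $\J^{(2)}$ is anti-linear, so the hypothesis $[\T, \J^{(2)}]=0$ is read as $\T\J^{(2)} = \J^{(2)}\T$ in the sense of anti-linear operators. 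The ``in particular'' assertion then follows by iteration: the relation $\J^{(n)}\T_{n-1}\T_{n-2}\cdots\T_{n-j} = \T_1 \T_2 \cdots \T_j \J^{(n)}$ holds for each $j = 0, 1, \dots, n-1$, so summing over $j$ yields $\J^{(n)}\tilde R_{\T,n} = R_{\T,n}\J^{(n)}$.

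For part (b), I would induct on $n$. The base case $n=1$ is immediate since $P_{\T,1} = 1$. For the inductive step, starting from the defining recursion $P_{\T,n+1} = (1 \otimes_\phi P_{\T,n}) R_{\T,n+1}$, the key step is to show that $\J^{(n+1)}(1 \otimes_\phi P_{\T,n})\J^{(n+1)} = P_{\T,n} \otimes_\phi 1$. Checking this on simple tensors as in part (a), conjugating an operator that acts as $P_{\T,n}$ on the last $n$ factors by the global reversal $\J^{(n+1)}$ produces the operator acting as $\J^{(n)} P_{\T,n} \J^{(n)}$ on the first $n$ factors, which by the induction hypothesis coincides with $P_{\T,n}$. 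This gives $\J^{(n+1)}(1 \otimes_\phi P_{\T,n}) = (P_{\T,n} \otimes_\phi 1) \J^{(n+1)}$. Rewriting $\J^{(n+1)} R_{\T,n+1} = \tilde R_{\T,n+1} \J^{(n+1)}$ via part (a) and invoking the alternative formula $(P_{\T,n} \otimes_\phi 1) \tilde R_{\T,n+1} = P_{\T,n+1}$ from \Cref{lem:inn_prod_braided_twist} yields $\J^{(n+1)} P_{\T,n+1} = P_{\T,n+1} \J^{(n+1)}$.

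The main difficulty is purely notational: keeping the anti-linearity of $\J^{(n)}$ straight and correctly tracking how conjugation by the global reversal $\J^{(n+1)}$ swaps an operator acting on the last $n$ factors with one acting on the first $n$ factors (with the internal order reversed). The braided hypothesis enters only through \Cref{lem:inn_prod_braided_twist}, which provides the second recursion for $P_{\T,n+1}$ needed to close the induction after the combinatorial rearrangement supplied by part (a).
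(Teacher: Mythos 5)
Your proposal is correct and follows essentially the same route as the paper: part (a) is verified on simple tensors of bounded vectors using the order-reversal of $\J^{(n)}$ and extended by density, and part (b) proceeds by induction using the swap identity $\J^{(n+1)}(1\otimes_\phi P_{\T,n})=(P_{\T,n}\otimes_\phi 1)\J^{(n+1)}$, the intertwining of $R_{\T,n+1}$ and $\tilde R_{\T,n+1}$ from (a), and the braided recursion $P_{\T,n+1}=(P_{\T,n}\otimes_\phi 1)\tilde R_{\T,n+1}$ from \Cref{lem:inn_prod_braided_twist}. The only cosmetic difference is that you conjugate from the left where the paper works from the right, which is equivalent since $\J^{(n+1)}$ is an involution.
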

\begin{proof}
    \begin{enumerate}[(a)]
        \item If $\xi_1,\dots,\xi_n\in\H$ are bounded, then
        \begin{align*}
            \J^{(n)}\T_k(\xi_1\otimes_\phi\dots\otimes_\phi\xi_n)&=\J^{(n)}(\xi_1\otimes_\phi\dots\otimes_\phi\T(\xi_k\otimes_\phi\xi_{k+1})\otimes_\phi\dots\otimes_\phi\xi_n)\\
            &=\J\xi_n\otimes_\phi\dots\otimes_\phi\J^{(2)}\T(\xi_k\otimes_\phi\xi_{k+1})\otimes_\phi\dots\otimes_\phi \J\xi_1\\
            &=\J\xi_n\otimes_\phi\dots\otimes_\phi\T\J^{(2)}(\xi_k\otimes_\phi\xi_{k+1})\otimes_\phi\dots\otimes_\phi \J\xi_1\\
            &=\T_{n-k}\J^{(n)}(\xi_1\otimes_\phi\dots\otimes_\phi\xi_n).
        \end{align*}
        Now the first claim follows from the density of bounded vectors in $\H$. The second claim is then immediate from the definitions of $R_{\T,n}$ and $\tilde R_{\T,n}$.
        \item We proceed by induction. Assume that the claim is true for $n\in\mathbb N$. If $\xi\in\H^{\otimes_\phi n}$ and $\eta\in \H$ are bounded, then
        \begin{align*}
            (1\otimes_\phi P_{\T,n})\J^{(n+1)}(\xi\otimes_\phi \eta)
            &=(1\otimes_\phi P_{\T,n})(\J\eta\otimes_\phi \J^{(n)}\xi)\\
            &=\J\eta\otimes_\phi P_{\T,n}\J^{(n)}\xi\\
            &=\J\eta\otimes_\phi \J^{(n)}P_{\T,n}\xi\\
            &=\J^{(n+1)}(P_{\T,n}\xi\otimes_\phi \eta)\\
            &=\J^{(n+1)}(P_{\T,n}\otimes_\phi 1)(\xi\otimes_\phi \eta).
        \end{align*}
        Hence $(1\otimes_\phi P_{\T,n})\J^{(n+1)}=\J^{(n+1)}(P_{\T,n}\otimes_\phi 1)$.

        Now we deduce from (a) that
        \begin{align*}
            P_{\T,n+1}\J^{(n+1)}
            &=(1\otimes_\phi P_{\T,n})R_{\T,n+1}\J^{(n+1)}\\
            &=(1\otimes_\phi P_{\T,n})\J^{(n+1)}\tilde R_{\T,n+1}\\
            &=\J^{(n+1)}(P_{\T,n}\otimes_\phi 1)\tilde R_{\T,n+1}.
        \end{align*}
        Since $\T$ is braided, the last expression coincides with $\J^{(n+1)}P_{\T,n+1}$ by \Cref{lem:inn_prod_braided_twist}.\qedhere
    \end{enumerate}
\end{proof}

The analogous result for the unitary group $(\U_t)$ is immediate.
\begin{lemma}
    If $\T$ is a twist such that $[\T,\U_t^{(2)}]=0$ for all $t\in\mathbb R$, then $[P_{\T,n},\U_t]=0$ for all $t\in\mathbb R$ and $n\in \mathbb N$.
\end{lemma}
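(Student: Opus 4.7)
The plan is to mirror the structure of the preceding lemma for $\J^{(n)}$, but the argument is considerably simpler here because $\U_t^{(n)}$ acts ``diagonally'' on the tensor factors without reversing their order. In particular, there is no analogue of the index-shift $k \mapsto n-k$ that appeared in the $\J$-case, so we do not need to invoke \Cref{lem:inn_prod_braided_twist} and do not need to assume $\T$ is braided.

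First I would establish the commutation of $\U_t^{(n)}$ with each $\T_k$: for $1 \leq k \leq n-1$,
\begin{equation*}
    \U_t^{(n)} \T_k = \T_k \U_t^{(n)}.
\end{equation*}
On elementary tensors $\xi_1 \otimes_\phi \cdots \otimes_\phi \xi_n$ of bounded vectors, both sides compute to $\U_t\xi_1 \otimes_\phi \cdots \otimes_\phi \T(\U_t \xi_k \otimes_\phi \U_t \xi_{k+1}) \otimes_\phi \cdots \otimes_\phi \U_t \xi_n$, using the hypothesis $\U_t^{(2)} \T = \T \U_t^{(2)}$ on the middle two factors. Density of bounded vectors then extends this to all of $\H^{\otimes_\phi n}$. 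From the definition $R_{\T,n} = 1 + \T_1 + \T_1 \T_2 + \dots + \T_1 \cdots \T_{n-1}$ it follows immediately that $[R_{\T,n}, \U_t^{(n)}] = 0$.

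Next, I would prove $[P_{\T,n}, \U_t^{(n)}] = 0$ by induction on $n$. The case $n=1$ is trivial since $P_{\T,1} = 1$. For the induction step, use the defining recursion $P_{\T,n+1} = (1 \otimes_\phi P_{\T,n}) R_{\T,n+1}$. A short computation on bounded vectors $\xi \otimes_\phi \eta \in \H \otimes_\phi \H^{\otimes_\phi n}$ shows
\begin{equation*}
    (1 \otimes_\phi P_{\T,n}) \U_t^{(n+1)} (\xi \otimes_\phi \eta) = \U_t \xi \otimes_\phi P_{\T,n} \U_t^{(n)} \eta = \U_t \xi \otimes_\phi \U_t^{(n)} P_{\T,n} \eta = \U_t^{(n+1)} (1 \otimes_\phi P_{\T,n})(\xi \otimes_\phi \eta),
\end{equation*}
where the middle equality is the inductive hypothesis. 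Thus $1 \otimes_\phi P_{\T,n}$ commutes with $\U_t^{(n+1)}$, and combining with the commutation of $R_{\T,n+1}$ and $\U_t^{(n+1)}$ from the first step yields $[P_{\T,n+1}, \U_t^{(n+1)}] = 0$.

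There is no real obstacle: the only point requiring mild care is passing from bounded vectors to all of $\H^{\otimes_\phi n}$, which is handled by continuity of all operators involved together with \Cref{prop:bdd_analytic_vectors_dense} (or the density statements already used in the preceding proofs). The absence of a braided hypothesis here is consistent with the fact that $\U_t^{(n)}$ respects the ordering of tensor factors, so one needs only the single recursion $P_{\T,n+1} = (1 \otimes_\phi P_{\T,n}) R_{\T,n+1}$ rather than its mirror version.
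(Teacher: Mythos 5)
Your proof is correct and is precisely the argument the paper has in mind: the paper omits the proof entirely, stating only that the result is "immediate" as the analogue of the preceding lemma for $\J^{(n)}$, and your write-up spells out exactly that analogue. You are also right that the braided hypothesis is unnecessary here because $\U_t^{(n)}$ preserves the ordering of tensor factors, so only the recursion $P_{\T,n+1}=(1\otimes_\phi P_{\T,n})R_{\T,n+1}$ is needed, consistent with the paper's statement of the lemma.
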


\begin{definition}
If $(\H,\J,(\U_t))$ is a Tomita correspondence, we say that a braided twist $\T$ on $\H\otimes_\phi\H$ is \emph{compatible} if $[\T,\J^{(2)}]=0$ and $[\T,\U_t^{(2)}]=0$ for all $t\in\IR$.   
\end{definition}

If $\T$ is a compatible twist, it follows from the previous two lemmas that $\J^{(n)}$ and $\U_t^{(n)}$ leave $\ker P_{\T,n}$ invariant and are isometric with respect to $\langle \,\cdot\,,\cdot\,\rangle_{\T,n}$ for all $n\in\mathbb N$. Thus they can be continuously extended to (anti-) unitary operators on $\H_{\T,n}$. It is then not hard to see that these extensions make $\H_{\T,n}$ and $\F_\T(\H)$ into Tomita correspondences.

We write $\F_\T(\J)$ and $\F_\T(\U_t)$ for the bounded operators on $\F_\T(\H)$ that act on $\H_{\T,n}$ by $\J^{(n)}$ and $\U_t^{(n)}$, respectively.

Let us discuss under which conditions the twists in the examples from the beginning of the section are compatible twists.

\begin{example}\label{Ex:twist_trivial_bimod}
As in Example \ref{ex:twist_trivial_bimod}, let $\H=L^2(M)\otimes \ell^2(I)$ with left and right action $x(\xi\otimes e_i)y=x\xi y\otimes e_i$, let $T$ be a braided twist on $\ell^2(I)\otimes\ell^2(I)$ and let $\T=1_{L^2(M)}\otimes T$.

If $\J=J_\phi\otimes J$, where $J$ is the complex conjugation on $\ell^2(I)$, and $\U_t=\Delta_\phi^{it}\otimes U_t$ for some strongly unitary group $(U_t)$ on $\ell^2(I)$ such that $[U_t,J]=0$ for all $t\in\IR$, then 
\begin{align*}
    [\T,\U_t^{(2)}]&=\Delta_\phi^{it}\otimes [T,U_t^{(2)}],\\
    [\T,\J^{(2)}]&=J_\phi\otimes [T,J^{(2)}].
\end{align*}
Hence $\T$ is compatible if and only if $T$ is compatible.
\end{example}

\begin{example}\label{Ex:twist_group_alg}
As in Example \ref{ex:twist_group_alg}, let $G$ be a discrete group, $\pi$ a unitary representation of $G$ on a Hilbert space $H$ and $J$ an anti-unitary involution on $H$ such that $[\pi(g),J]=0$ for all $g\in G$. As discussed in \Cref{ex:twist_trivial_bimod}, we can turn $\ell^2(G)\otimes H$ into a correspondence $\H_\pi$ from $L(G)$ to itself. Let $T$ be an equivariant braided twist on $H\otimes H$ and $\T=1_{\ell^2(G)}\otimes T$.

On $\H_\pi$ one can define an anti-unitary involution $\J_\pi$ by $\J_\pi(\delta_g\otimes \xi)=\delta_{g^{-1}}\otimes \pi(g^{-1})J\xi$. If $(U_t)$ is an equivariant strongly continuous unitary group on $H$ such that $[U_t,J]=0$ for all $t\in\IR$, let $\U_t=1_{\ell^2(G)}\otimes U_t$. Then $(\H_\pi,\J_\pi,(\U_t))$ is a Tomita correspondence over $(L(G),\tau)$. Moreover, as in the previous example, one can show that $\T$ is compatible if and only if $T$ is compatible.
\end{example}

\begin{example}\label{Ex:Twist_Centered_Bimod}
Let $(\H,\J,(\U_t))$ be a Tomita correspondence over $(M,\phi)$ such that $\H$ is centered in the sense discussed in \Cref{ex:twist_centered_bimod} and let $\T$ be the flip map on $\H\otimes_\phi\H$. We write $F$ for $\L(L^2(M)_M,\H_M)$.

Let 
\begin{equation*}
    \mathfrak a_0=\left\{x\in\n_\phi\cap \n_\phi^\ast\cap\bigcap_{z\in\mathbb C}\dom(\sigma^\phi_z)\,\bigg\vert\, \sigma^\phi_{z}(x)\in \n_\phi\cap \n_\phi^\ast\text{ for all }z\in\IC\right\}.
\end{equation*}
By standard approximation techniques \cite[Theorems VI.2.2, VII.2.6]{Tak03}, $\mathfrak a_0$ is a strongly$^\ast$ dense $\ast$-subalgebra of $M$. Since $F$ is centered, it follows that the linear span of $\{Rx\mid R\in\mathcal Z(F),\,x\in\mathfrak a_0\}$ is strongly dense in $F$. Together with \cite[Lemma IX.3.3]{Tak03}, this implies that the linear span of $\{R\Lambda_\phi(x)\mid R\in \mathcal Z(F),x\in\mathfrak a_0\}$ is dense in $\H$.

With the canonical isomorphism between $F\overline\odot_M F\overline\odot L^2(M)$ and $\H\otimes_\phi\H$ discussed in Section \ref{sec:rel_tensor_prod}, it is not hard to see that 
\begin{equation*}
    \T(R_1\Lambda_\phi(x_1)\otimes_\phi R_2\Lambda_\phi(x_2))=R_2\Lambda_\phi(x_1)\otimes_\phi R_1\Lambda_\phi(x_1)
\end{equation*}
for $R_1,R_2\in \mathcal Z(F)$, $x_1,x_2\in\mathfrak a_0$.

If $R\in \mathcal Z(F)$, that is, $R$ is a bounded bimodule map from $L^2(M)$ to $\H$, then $\J R J_\phi$ and $\U_t R \Delta_\phi^{-it}$ are also bounded bimodule maps. Thus
\begin{align*}
    \J^{(2)}\T(R_1\Lambda_\phi(x_1)\otimes_\phi R_2\Lambda_\phi(x_2))&=\J^{(2)}(R_2\Lambda_\phi(x_1)\otimes_\phi R_1\Lambda_\phi(x_2))\\
    &=\J R_1\Lambda_\phi(x_2)\otimes_\phi\J R_2\Lambda_\phi(x_1)\\
    &=\J R_1 J_\phi J_\phi\Lambda_\phi(x_2)\otimes_\phi\J R_2 J_\phi J_\phi \Lambda_\phi(x_1)\\
    &=\J R_1 J_\phi \Lambda_\phi(\sigma^\phi_{i/2}(x_2)^\ast)\otimes_\phi \J R_2 J_\phi \Lambda_\phi(\sigma^\phi_{i/2}(x_1)^\ast)\\
    &=\T(\J R_2 J_\phi\Lambda_\phi(\sigma^\phi_{i/2}(x_2)^\ast)\otimes_\phi \J R_1 J_\phi\Lambda_\phi(\sigma^\phi_{i/2}(x_1)^\ast))\\
    &=\T(\J R_2 \Lambda_\phi(x_2)\otimes_\phi \J R_1 \Lambda_\phi(x_1))\\
    &=\T \J^{(2)}(R_1\Lambda_\phi(x_1)\otimes_\phi R_2\Lambda_\phi(x_2))
\end{align*}
and
\begin{align*}
    \U_t^{(2)}\T(R_1\Lambda_\phi(x_1)\otimes_\phi R_2\Lambda_\phi(x_2))&=\U_t^{(2)}(R_2\Lambda_\phi(x_1)\otimes_\phi R_1\Lambda_\phi(x_2))\\
    &=\U_t R_2\Delta_\phi^{-it}\Lambda_\phi(\sigma^\phi_t(x_1))\otimes_\phi\U_t R_1 \Delta_\phi^{-it}\Lambda_\phi(\sigma^\phi_t(x_2))\\
    &=\T(\U_t R_1 \Delta_\phi^{-it}\Lambda_\phi(\sigma^\phi_t(x_1))\otimes_\phi \U_t R_2\Delta_\phi^{-it}\Lambda_\phi(\sigma^\phi_t(x_2)))\\
    &=\T(\U_t R_1\Lambda_\phi(x_1)\otimes_\phi \U_t R_2\Lambda_\phi(x_2))\\
    &=\T\U_t^{(2)}(R_1\Lambda_\phi(x_1)\otimes_\phi R_2 \Lambda_\phi(x_2)).
\end{align*}
for all $R_1,R_2\in\mathcal Z(F)$ and $x_1,x_2\in\mathfrak a_0$. Together with the density properties discussed above, this implies that $[\J^{(2)},\T]=0$ and $[\U_t^{(2)},\T]=0$. Therefore $\T$ (and thus also $q\T$ for $q\in [-1,1]$) is a compatible twist.
\end{example}

\section{Operator-Valued Twisted Araki--Woods Algebras}\label{sec:def_and_mod_theory}

In this section we introduce the operator-valued twisted Araki--Woods algebras as von Neumann algebras on the twisted Fock space generated by the left action of the base algebra and certain operator-valued analogs of the field operators. In the second part, we show that these operator-valued twisted Araki--Woods algebras carry a natural ``vacuum'' weight and show how under a suitable condition on the twist, the associated modular data can be described in terms of the underlying Tomita correspondence.

\subsection{Twisted Creation and Annihilation Operators}

We start with the definition of the creation operators. To define them in the twisted case, we need some operator inequalities, which will also be useful later for factoriality criteria in Section \ref{sec:factoriality}. The proofs are essentially the same as in the scalar-valued case (see \cite{BS94,Boz98}). In the following, $M$ is a von Neumann algebra, $\phi$ is a normal semi-finite faithful weight on $M$ and $(\H,\J,(\U_t))$ is a Tomita correspondence over $(M,\phi)$.

\begin{lemma}\label{lem:bound_twisted_norm_tensor_product}
    If $\T$ is a twist on $\H\otimes_\phi\H$, then
    \begin{equation*}
        c_n(\norm{\T})(1\otimes_\phi P_{\T,n})\leq P_{\T,n+1}\leq \left(\sum_{k=0}^n \norm{\T}^k\right)(1\otimes_\phi P_{\T,n}),
    \end{equation*}
    where $c_n(q)=\prod_{k=1}^n (1+q^k)^{-1}\prod_{k=1}^{n}(1-q^k)$.

    If $\T$ is a braided twist, the inequalities remain true if $1\otimes_\phi P_{\T,n}$ is replaced by $P_{\T,n}\otimes_\phi 1$.
\end{lemma}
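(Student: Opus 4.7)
The plan is to exploit the recursion $P_{\T,n+1}=(1\otimes_\phi P_{\T,n})R_{\T,n+1}$ together with the self-adjointness of $P_{\T,n+1}$. Since $P_{\T,2}=1+\T$ is positive, $\T$ must be self-adjoint, so taking adjoints in $P_{\T,n+1}=P_{\T,n+1}^\ast$ yields the dual recursion $P_{\T,n+1}=R_{\T,n+1}^\ast(1\otimes_\phi P_{\T,n})$. This will drive both estimates.

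\emph{Upper bound.} Writing $Q_n:=1\otimes_\phi P_{\T,n}$, combining the two forms of the recursion gives
\begin{equation*}
P_{\T,n+1}^2 = Q_n R_{\T,n+1} R_{\T,n+1}^\ast Q_n \leq \|R_{\T,n+1}\|^2 Q_n^2 \leq \Bigl(\sum_{k=0}^n\|\T\|^k\Bigr)^2 Q_n^2,
\end{equation*}
where the last estimate is the triangle inequality applied to the definition of $R_{\T,n+1}$. Operator monotonicity of the square root on positive operators then yields $P_{\T,n+1}\leq(\sum_{k=0}^n\|\T\|^k)Q_n$.

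\emph{Lower bound.} This is the technically delicate part and follows the scheme of Bo\.{z}ejko--Speicher \cite{BS94,Boz98} transposed to the bimodule setting. After the regularization $P_{\T,n}\rightsquigarrow P_{\T,n}+\epsilon$, the identity $Q_n R_{\T,n+1}=R_{\T,n+1}^\ast Q_n$ forces $Q_n^{-1/2}P_{\T,n+1}Q_n^{-1/2}$ to be a self-adjoint operator, whose spectrum I would lower bound by $c_n(\|\T\|)$ by induction on $n$. The inductive step uses the decomposition $R_{\T,n+1}=1+\T_1 R_{\T,n}^{(2)}$, where $R_{\T,n}^{(2)}$ is the shifted operator acting on the last $n$ tensor factors, together with the induction hypothesis applied to $R_{\T,n}^{(2)}$, to extract an additional factor $(1-\|\T\|^{n+1})(1+\|\T\|^{n+1})^{-1}$ at each step; iterating produces the telescoping product defining $c_n(\|\T\|)$, and letting $\epsilon\to 0$ removes the regularization. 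When $\T$ is braided, \Cref{lem:inn_prod_braided_twist} supplies the alternative recursion $P_{\T,n+1}=(P_{\T,n}\otimes_\phi 1)\tilde R_{\T,n+1}$ with $\|\tilde R_{\T,n+1}\|\leq\sum_{k=0}^n\|\T\|^k$; rerunning the two arguments with $P_{\T,n}\otimes_\phi 1$ in place of $Q_n$ yields the same two-sided estimates on the other side.

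The main obstacle is the lower bound: the naive triangle-inequality estimate $R_{\T,n+1}\geq 1-\sum_{k=1}^n\|\T\|^k$ is useless once $\|\T\|$ is not very small, so the sharp constant $c_n(q)$ can only emerge by using the positivity of all the lower-order $P_{\T,k}$ themselves in the induction, with each level picking up its own $(1-q^{k+1})(1+q^{k+1})^{-1}$ factor.
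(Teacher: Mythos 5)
Your upper bound is correct and is exactly the paper's argument: self-adjointness of $P_{\T,n+1}$ gives $P_{\T,n+1}^2=Q_nR_{\T,n+1}R_{\T,n+1}^\ast Q_n\leq\norm{R_{\T,n+1}}^2Q_n^2$, and operator monotonicity of the square root finishes. The braided variant via \Cref{lem:inn_prod_braided_twist} is also as in the paper.

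The lower bound, however, has a genuine gap. You correctly identify that the naive estimate $R_{\T,n+1}\geq 1-\sum_{k=1}^n\norm{\T}^k$ is useless, but the replacement you offer --- an induction on the decomposition $R_{\T,n+1}=1+\T_1R_{\T,n}^{(2)}$ that "extracts an additional factor $(1-\norm{\T}^{k})(1+\norm{\T}^{k})^{-1}$ at each step" --- is asserted, not proved, and it is not clear it can be proved in that form. Knowing a two-sided bound on $R_{\T,n}^{(2)}$ (or on the lower-order $P_{\T,k}$) does not control the cross terms in $R_{\T,n+1}R_{\T,n+1}^\ast=(1+\T_1R_{\T,n}^{(2)})(1+(R_{\T,n}^{(2)})^\ast\T_1)$; any argument along the lines of $1+\T_1R_{\T,n}^{(2)}=((R_{\T,n}^{(2)})^{-1}+\T_1)R_{\T,n}^{(2)}$ runs into the fact that $|(R_{\T,n}^{(2)})^{-1}|$ is only bounded below by $d_{n-1}(\norm\T)^{-1}$, and $d_{n-1}(\norm\T)^{-1}-\norm\T$ becomes negative for $\norm\T$ close to $1$. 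The missing ingredient is Bo\.{z}ejko's explicit quasi-multiplicative factorization (\cite[Lemma 4(b)]{Boz98}), valid for $\norm\T<1$,
\begin{equation*}
R_{\T,n+1}=\prod_{k=1}^{n-1}\bigl(1-\T_n^2\T_{n-1}\cdots\T_k\bigr)\,(1+\T_n)\,\prod_{l=n}^{1}\bigl(1-\T_n\cdots\T_l\bigr)^{-1},
\end{equation*}
which is what lets one read off $\norm{R_{\T,n+1}^{-1}}\leq c_n(\norm\T)^{-1}$, hence $R_{\T,n+1}R_{\T,n+1}^\ast\geq c_n(\norm\T)^2$, and then conclude exactly as in your upper bound. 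Without invoking this identity (or an equivalent use of the Coxeter structure of $S_{n+1}$), the inductive step does not close; note also that the case $\norm\T=1$ must be treated separately (there $c_n(1)=0$ and the inequality is trivial from $P_{\T,n+1}\geq 0$), which your regularization scheme does not address.
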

\begin{proof}
    The second inequality follows from the same arguments as the one used in \cite[Theorem 3.1]{BS94} in the scalar-valued case: Let $d_n=\left(\sum_{k=0}^n \norm{\T}^k\right)$. Since $P_{\T,n}\geq 0$, we have
    \begin{align*}
        P_{\T,n+1}^2
        =(1\otimes_\phi P_{\T,n})R_{\T,n+1}R_{\T,n+1}^\ast(1\otimes_\phi P_{\T,n})
        \leq \norm{R_{\T,n+1}}^2(1\otimes_\phi P_{\T,n})^2.
    \end{align*}
    By the definition of $R_{T,n+1}$ and \Cref{lem:rel_tensor_prod_maps}, we have $\norm{R_{\T,n+1}}\leq d_n$. Thus $P_{\T,n+1}\leq d_n (1\otimes_\phi P_{\T,n})$.

    For the first inequality, we follow the argument from \cite[Theorem 6]{Boz98} in the scalar-valued case. The inequality holds trivially when $\norm{\T}=1$ since $P_{\T,n}\geq 0$ for all $n\in \mathrm{N}$ by the definition of twist. Thus, we consider only the case when $\norm{\T}<1$. It follows by \Cref{lem:rel_tensor_prod_maps}, the lines of \cite[Lemma 4(b)]{Boz98} that
    \begin{equation*}
      R_{\T,n+1}=\prod_{k=1}^{n-1}(1-\T_n^2\T_{n-1}\cdots \T_k)(1+\T_n) \prod_{l=n}^{1}(1-\T_n\cdots\T_l)^{-1},
    \end{equation*}
    which implies
    \begin{equation*}
      R_{\T,n+1}^{-1}= \prod_{l=1}^{n}(1-\T_n\cdots\T_l)(1+\T_n)^{-1} \prod_{k=n-1}^{1}(1-\T_n^2\T_{n-1}\cdots \T_k)^{-1}.
    \end{equation*}
    Therefore,
    \begin{align*}
    \norm{R_{\T,n+1}^{-1}}&\leq \prod_{k=1}^{n}(1+\norm{\T}^k) (1+\norm{\T})^{-1}\prod_{k=1}^{n-1} (1+\norm{\T}^{n-k+1})^{-1}\\
    &=(1+\norm{\T})^{-1} \prod_{k=1}^{n}(1+\norm{\T}^k)\prod_{k=2}^{n} (1+\norm{\T}^{k})^{-1}\\
    &\leq(1-\norm{\T})^{-1} \prod_{k=1}^{n}(1+\norm{\T}^k)\prod_{k=2}^{n} (1-\norm{\T}^{k})^{-1}.
    \end{align*}
Hence, $\norm{R_{\T,n+1}^{-1}}\leq c_n(\norm{\T})^{-1}$. This implies that $\norm{(R_{\T,n+1}^{-1})^\ast(R_{\T,n+1}^{-1})}\leq (c_n(\norm{\T})^{-1})^2$ and hence $R_{\T,n+1}R_{\T,n+1}^\ast\geq c_n(\norm{\T})^2 $. Now it follows from the previous calculation that
    \begin{align*}
        P_{\T,n+1}^2
        =(1\otimes_\phi P_{\T,n})R_{\T,n+1}R_{\T,n+1}^\ast(1\otimes_\phi P_{\T,n})
        \geq c_n(\norm{\T})^2 (1\otimes_\phi P_{\T,n})(1\otimes_\phi P_{\T,n}).
    \end{align*}
    If $\T$ is a braided twist, then $P_{\T,n+1}=(P_{\T,n}\otimes_\phi 1)\tilde R_{\T,n+1}$ by \Cref{lem:inn_prod_braided_twist}, and the proof of the inequalities with $1\otimes_\phi P_{\T,n}$ replaced by $P_{\T,n}\otimes_\phi 1$ is analogous.
    \end{proof}
\begin{lemma}\label{lem:norm_creation_op}
    Let $\T$ be a twist on $\H\otimes_\phi\H$. If $\xi\in \H$ is left bounded and $\eta\in \H^{\otimes_\phi n}$,then
    \begin{equation*}
        \langle \xi\otimes_\phi \eta,P_{\T,n+1}(\xi\otimes_\phi \eta)\rangle_{\H^{\otimes_\phi(n+1)}}\leq \left(\sum_{k=0}^n \norm{\T}^k\right)\norm{L_\phi(\xi)}^2\norm{\eta}_{\T,n}^2.
    \end{equation*}
\end{lemma}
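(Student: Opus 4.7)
The plan is to reduce this to the upper bound from Lemma \ref{lem:bound_twisted_norm_tensor_product} together with the defining formula for the inner product on the relative tensor product.

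First, I would apply the upper inequality from Lemma \ref{lem:bound_twisted_norm_tensor_product}, namely $P_{\T,n+1}\leq \left(\sum_{k=0}^n \norm{\T}^k\right)(1\otimes_\phi P_{\T,n})$, to obtain
\begin{equation*}
\langle \xi\otimes_\phi\eta,P_{\T,n+1}(\xi\otimes_\phi\eta)\rangle \leq \left(\sum_{k=0}^n\norm{\T}^k\right)\langle \xi\otimes_\phi\eta,\xi\otimes_\phi P_{\T,n}\eta\rangle.
\end{equation*}
Here $P_{\T,n}$ is a bimodule map on $\H^{\otimes_\phi n}$, and so $1\otimes_\phi P_{\T,n}$ makes sense on $\H\otimes_\phi\H^{\otimes_\phi n}$ by Lemma \ref{lem:rel_tensor_prod_maps}, and the identity $(1\otimes_\phi P_{\T,n})(\xi\otimes_\phi\eta)=\xi\otimes_\phi P_{\T,n}\eta$ follows from the description of the tensor product of module maps given there.

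Next, I would use the defining formula for the inner product on the relative tensor product (using the weight-dependent picture from Section \ref{sec:rel_tensor_prod}):
\begin{equation*}
\langle \xi\otimes_\phi\eta,\xi\otimes_\phi P_{\T,n}\eta\rangle = \langle \eta,\pi_l(L_\phi(\xi)^\ast L_\phi(\xi))\,P_{\T,n}\eta\rangle.
\end{equation*}
Setting $a=L_\phi(\xi)^\ast L_\phi(\xi)\in M$, this is a positive element with $a\leq \norm{L_\phi(\xi)}^2\,1$. Since $P_{\T,n}\geq 0$ is a bimodule map, it commutes with $\pi_l(a)$, so I can factor $\pi_l(a)P_{\T,n}=P_{\T,n}^{1/2}\pi_l(a)P_{\T,n}^{1/2}$. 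Then
\begin{equation*}
\langle \eta,\pi_l(a)P_{\T,n}\eta\rangle = \langle P_{\T,n}^{1/2}\eta,\pi_l(a)P_{\T,n}^{1/2}\eta\rangle \leq \norm{a}\langle P_{\T,n}^{1/2}\eta,P_{\T,n}^{1/2}\eta\rangle = \norm{L_\phi(\xi)}^2\,\norm{\eta}_{\T,n}^2.
\end{equation*}
Combining these two displays gives the claimed inequality.

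This is essentially a direct calculation; the only point one must be careful about is the manipulation in the last step, which relies on the fact that $P_{\T,n}$ is a positive bimodule map and hence its square root commutes with the left action of $M$. No real obstacle arises since Lemma \ref{lem:bound_twisted_norm_tensor_product} already does the heavy lifting.
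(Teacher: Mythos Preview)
Your proof is correct and follows essentially the same approach as the paper's own proof: apply the upper bound from Lemma~\ref{lem:bound_twisted_norm_tensor_product}, expand the relative tensor product inner product, and use that $P_{\T,n}$ is a positive bimodule map so that its square root commutes with the left action of $M$. The paper carries out exactly these steps in the same order.
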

\begin{proof}
    Let $d_n=\left(\sum_{k=0}^n \norm{\T}^k\right)$. By \Cref{lem:bound_twisted_norm_tensor_product} we have $P_{\T,n+1}\leq d_n (1\otimes_\phi P_{\T,n})$.

    It follows that
    \begin{align*}
     \langle \xi\otimes_\phi \eta,P_{\T,n+1}(\xi\otimes_\phi \eta)\rangle_{\H^{\otimes_\phi(n+1)}}
     &\leq d_n\langle \xi\otimes_\phi \eta,\xi\otimes_\phi P_{\T,n}\eta\rangle_{\H^{\otimes_\phi (n+1)}}\\
     &=d_n\langle \eta,L_\phi(\xi)^\ast L_\phi(\xi)\cdot P_{\T,n}\eta\rangle_{\H^{\otimes_\phi n}}\\
     &=d_n\langle P_{\T,n}^{1/2}\eta,L_\phi(\xi)^\ast L_\phi(\xi)\cdot P_{\T,n}^{1/2}\eta\rangle_{\H^{\otimes_\phi n}}\\
     &\leq d_n \norm{L_\phi(\xi)}^2\langle \eta,P_{\T,n}\eta\rangle_{\H^{\otimes_\phi n}}\\
     &=d_n \norm{L_\phi(\xi)}^2\norm{\eta}_{\T,n}^2,
    \end{align*}
    where we used that $P_{\T,n}$ is positive and a bimodule map.
\end{proof}

As a consequence of the previous lemma, for every left bounded $\xi\in \H$, the map
\begin{equation*}
    \H^{\otimes_\phi n}/\ker P_{\T,n}\to \H^{\otimes_\phi n+1}/\ker P_{\T,n+1},\,\eta+\ker P_{\T,n}\mapsto \xi\otimes_\phi\eta+\ker P_{\T,n+1}
\end{equation*}
extends to a bounded linear operator $a^\ast_{\T,n}(\xi)$ from $\H_{\T,n}$ to $\H_{\T,n+1}$, called (twisted) \emph{creation operator}. We further set $a_{\T,0}^\ast(\xi)=L_\phi(\xi)$ as operator from $L^2(M)$ to $\H$.

If $\norm{\T}<1$, then
\begin{equation*}
    \norm{a^\ast_{\T,n}(\xi)}\leq \sqrt{d_n}\norm{L_\phi(\xi)}\leq \frac{\norm{L_\phi(\xi)}}{\sqrt{1-\norm{T}}}
\end{equation*}
is bounded for $n\in\mathbb N$.

We define the (twisted) \emph{annihilation operator} by $a_{\T,n+1}(\xi)=a^\ast_{\T,n}(\xi)^\ast$ and $a_{\T,0}(\xi)=0$. If $\norm{\T}<1$, then the previous estimate shows that the twisted creation and annihilation operator can be extended to bounded operators on the twisted Fock bimodule, which we denote by $a_\T^\ast(\xi)$ and $a_\T(\xi)$, respectively.

In general however, as in the case of the scalar-valued Bosonic Fock space, one has to deal with unbounded operators. We show in the next lemma that one has unique self-adjoint realizations of the field operators even in the unbounded case.

\begin{lemma}\label{lem:field_op_ess_self-adj}
For every left-bounded vector $\xi\in \H$ there exists a unique self-adjoint operator $s_\T(\xi)$ on $\F_\T(\H)$ such that $L^2(M)\subset \dom(s_\T(\xi))$, $s_\T(\xi)\eta=a_{\T,0}^\ast(\xi)\eta$ for $\eta\in L^2(M)$ and $\H_{\T,n}\subset \dom(s_\T(\xi))$ for all $n\in\mathbb N$ and $s_\T(\xi)\eta=(a_{\T,n}^\ast(\xi)+a_{\T,n}(\xi))\eta$ for $\eta\in\H_{\T,n}$.
\end{lemma}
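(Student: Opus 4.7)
The plan is to view $s_\T(\xi)$ as the closure of an essentially self-adjoint symmetric operator defined on the algebraic direct sum of the graded pieces of $\F_\T(\H)$, and to establish essential self-adjointness via Nelson's analytic vector theorem.

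First I would define $s_0$ on the algebraic direct sum $\mathcal D = L^2(M)\oplus\bigoplus_{n\geq 1}^{\mathrm{alg}} \H_{\T,n}$ by $s_0\eta = a^\ast_{\T,n}(\xi)\eta + a_{\T,n}(\xi)\eta$ for $\eta\in\H_{\T,n}$ (with the convention $a_{\T,0}(\xi)=0$). Symmetry of $s_0$ on $\mathcal D$ is immediate from the relation $a_{\T,n+1}(\xi)=a^\ast_{\T,n}(\xi)^\ast$, which was built into the definition of the annihilation operator, together with the orthogonality of the graded components of $\F_\T(\H)$.

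Next I would derive the key norm bound. From \Cref{lem:norm_creation_op} one has $\norm{a^\ast_{\T,n}(\xi)}\leq\sqrt{d_n}\,\norm{L_\phi(\xi)}$ with $d_n = \sum_{k=0}^n\norm{\T}^k\leq n+1$, and by taking adjoints $\norm{a_{\T,n}(\xi)}\leq\sqrt{d_{n-1}}\,\norm{L_\phi(\xi)}\leq\sqrt{n}\,\norm{L_\phi(\xi)}$. Since for $\eta\in\H_{\T,n}$ the two summands $a^\ast_{\T,n}(\xi)\eta$ and $a_{\T,n}(\xi)\eta$ lie in orthogonal subspaces, this yields
\begin{equation*}
\norm{s_0\eta}\leq \sqrt{2n+1}\,\norm{L_\phi(\xi)}\,\norm{\eta}\qquad(\eta\in\H_{\T,n}).
\end{equation*}
Iterating and noting that $k$-fold application of $s_0$ sends $\H_{\T,n}$ into $\bigoplus_{|j|\leq k}\H_{\T,n+j}$, I get
\begin{equation*}
\norm{s_0^k\eta}\leq \norm{L_\phi(\xi)}^k\,\bigl(2(n+k)+1\bigr)^{k/2}\,\norm{\eta}.
\end{equation*}
Combined with Stirling's estimate $k!\geq (k/e)^k$, the ratio $\frac{t^k}{k!}\norm{s_0^k\eta}$ is dominated by $\bigl(et\,\norm{L_\phi(\xi)}\sqrt{2(n+k)+1}/k\bigr)^k$, which tends to $0$ for every $t>0$. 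Hence every $\eta\in\H_{\T,n}$ is an entire analytic vector for $s_0$.

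Since $\mathcal D$ is dense in $\F_\T(\H)$ and consists of analytic vectors for the symmetric operator $s_0$, Nelson's analytic vector theorem implies that $s_0$ is essentially self-adjoint on $\mathcal D$. Define $s_\T(\xi)$ as the closure of $s_0$. This self-adjoint operator satisfies the required properties on $L^2(M)$ and each $\H_{\T,n}$ by construction, and it is uniquely determined by them because any self-adjoint extension of $s_0$ must coincide with its unique closure. I do not anticipate a genuine obstacle: the only mildly delicate point is that in the unbounded regime $\norm{\T}=1$ the bound in \Cref{lem:norm_creation_op} is exactly what is needed to make the analytic vector estimate work, so invoking Nelson's theorem rather than merely closing bounded operators is essential.
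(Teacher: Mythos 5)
Your proof follows essentially the same route as the paper's: define the symmetric operator on the algebraic direct sum of the graded pieces, bound its powers via \Cref{lem:norm_creation_op}, invoke Nelson's analytic vector theorem, and obtain uniqueness because any self-adjoint operator with the stated properties is a self-adjoint extension of the essentially self-adjoint $s_0$. One small imprecision: the bound $\norm{s_0\eta}\leq\sqrt{2n+1}\,\norm{L_\phi(\xi)}\norm{\eta}$ exploits orthogonality of $a^\ast_{\T,n}(\xi)\eta$ and $a_{\T,n}(\xi)\eta$ for $\eta$ in a \emph{single} graded piece, but $s_0^k\eta$ is spread over several graded pieces whose images under $a^\ast_\T(\xi)$ and $a_\T(\xi)$ overlap (the component in $\H_{\T,p}$ receives contributions from degrees $p-1$ and $p+1$), so the iteration as stated picks up an extra constant; this is harmless — the paper simply uses the linear bound $\norm{s_0\eta}\leq 2(n+1)\norm{L_\phi(\xi)}\norm{\eta}$ on $\H_{\T,\leq n}$ and settles for a positive radius of analyticity, which is all Nelson's theorem needs — but you should either insert the corrective factor or fall back on the cruder estimate.
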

\begin{proof}
For $\xi\in \H$ left-bounded let $a_\T^\ast(\xi)$ and $a_\T(\xi)$ denote the operators whose domain contains all finitely non-zero sequences in $\F_\T(\H)$ and which coincide with $a_{\T,n}^\ast(\xi)$ and $a_{\T,n}(\xi)$, respectively, on $\H_{\T,n}$. Since $a_{\T}^\ast(\xi)$ and $a_\T(\xi)$ are mutually adjoint on their domain, the sum $s_\T^0(\xi)=a_\T^\ast(\xi)+a_\T(\xi)$ is symmetric and densely defined. We show that it is essentially self-adjoint using Nelson's theorem on analytic vectors \cite[Lemma 5.1]{Nel59}.

Let $\H_{\T,\leq n}=L^2(M)\oplus\bigoplus_{j=1}^n \H_{\T,j}$. By the previous lemma,
\begin{align*}
    \norm{a_\T^\ast(\xi)\eta}_{\T}\leq \left(\sum_{k=0}^n\norm{\T}^k\right)^{1/2}\norm{L_\phi(\xi)}\norm{\eta}_\T\leq \sqrt{n+1}\norm{L_\phi(\xi)}\norm{\eta}_\T\leq (n+1)\norm{L_\phi(\xi)}\norm{\eta}_{\T}
\end{align*}
for all $\eta\in \H_{\T,\leq n}$. Likewise, $\norm{a_\T(\xi)\eta}_\T\leq  n \norm{L_\phi(\xi)}\norm{\eta}_\T$ for all $\eta\in \H_{\T,\leq n}$.

If $\eta\in \H_{\T,\leq n}$, then $s_\T^0(\xi)^k\eta\in \H_{\T,\leq n+k}$. As
\begin{align*}
    \norm{s_\T^0(\xi)^{k+1}\eta}_\T&=\norm{(a_\T^\ast(\xi)+a_\T(\xi))s_\T^0(\xi)^k\eta}\leq 2(n+k+1)\norm{L_\phi(\xi)}\norm{s_\T^0(\xi)^k\eta}_\T,
\end{align*}
it follows by induction that 
\begin{equation*}
    \norm{s_\T^0(\xi)^k \eta}_\T\leq \frac{2^k(n+k)!}{n!}\norm{L_\phi(\xi)}^k \norm{\eta}_\T.
\end{equation*}
Thus
\begin{equation*}
    \sum_{k=0}^\infty \frac{\norm{s_\T^0(\xi)^k\eta}_\T}{k!}s^k\leq \norm{\eta}_\T\sum_{k=0}^\infty \binom{n+k}{n}(2s\norm{L_\phi(\xi)})^k.
\end{equation*}
As $\binom{n+k}{n}\leq (n+k)^n$, this series converges for $\abs{s}<(2\norm{L_\phi(\xi)})^{-1}$. Hence $\eta$ is an analytic vector for $s_\T^0(\xi)$. Since $\bigcup_{n\in\mathbb N}\H_{\T,\leq n}$ is dense in $\F_\T(\H)$, it follows from Nelson's theorem that $s_\T^0(\xi)$ is essentially self-adjoint on $\bigcup_{n\in\mathbb N}\H_{\T,\leq n}$.
\end{proof}

\begin{definition}
    Let $(\H,\J,(\U_t))$ be a Tomita correspondence over $(M,\phi)$ and $\T$ a compatible twist on $\H\otimes_\phi \H$. The operator-valued twisted Araki--Woods algebra $\Gamma_\T(\H,\J,(\U_t))$ is the von Neumann algebra generated by the left action of $M$ and the operators $s_\T(\xi)$ for $\xi\in \dom(\U_{-i/2})$ left bounded with $\J\U_{-i/2}\xi=\xi$.
\end{definition}
If some of the operators $s_\T(\xi)$ are unbounded, this von Neumann algebra is to be understood as the von Neumann algebra generated by the left action of $M$ and resolvents of the operators $s_\T(\xi)$ for $\xi\in \H$ left-bounded with $\J\U_{-i/2}\xi=\xi$.

\begin{remark}
    If one views $\J$ and $\U_{-i}$ as analogs of the modular conjugation and modular operator in Tomita--Takesaki theory, then $\J\U_{-i/2}$ is the analog of the Tomita operator $S$. Since $\J\U_{-i/2}$ is an anti-linear involution, the space $\H_\IR$ of left-bounded vectors $\xi\in \H$ with $\J\U_{-i/2}\xi=\xi$ is a real-linear subspace of $\H$ such that $\H_{\IR}+i\H_{\IR}$ is dense in $\H$ and $\H_{\IR}\cap i\H_{\IR}=\{0\}$. So the choice of $\H_\IR$ can be seen as a bimodule analog of the choice of a real Hilbert subspace in the definition of scalar-valued twisted Araki--Woods and Gaussian algebras.
\end{remark}

\subsection{Modular Theory}

In this subsection we exhibit a natural weight on the operator-valued twisted Araki--Woods algebras that takes the role of the vacuum state in the scalar-valued theory (but it will only be a state if the reference state on the base algebra is a state). Then we show that the associated modular data can be described in terms of the underlying Tomita correspondence. For this to hold however, we need an additional condition on the twist, which we call locality. This condition ensures that the commutant of the operator-valued twisted Araki--Woods algebra on the twisted Fock bimodule also has a natural description in terms of the underlying Tomita correspondence.

Let $\T$ be a compatible braided twist on $\H\otimes_\phi\H$. Analogous to the construction of the left creation operators, we define for right bounded $\xi\in \H$ the right creation operator $b_{\T,n}^\ast(\xi)$ that acts on $\H_{\T,n}$ by $b_\T^\ast(\xi)\eta=\eta\otimes_\phi \xi$, and we write $b_{\T,n}(\xi)$ for the adjoint of $b_{\T,n-1}^\ast(\xi)$. Taking \Cref{lem:inn_prod_braided_twist} into account, the boundedness on $\H_{\T,n}$ follows from similar computations as for the left creation operators. An analogous argument to the one given in Lemma \ref{lem:field_op_ess_self-adj} shows that there is a unique self-adjoint operator $d_\T(\xi)$ on $\F_\T(\H)$ that coincides with $b^\ast_{\T,n}(\xi)+b_{\T,n}(\xi)$ on $\H_{\T,n}$ for all $n\in\mathbb N$.

\begin{lemma}\label{lem:intertwining_left_right_creation}
    Assume that $\T$ is a braided compatible twist. If $\xi\in \H$ is left bounded and $\J\U_{-i/2}\xi=\xi$, then $\J\xi$ is right bounded and $\J\U_{i/2}\J\xi=\J\xi$ and
    \begin{equation*}
        \J^{(n+1)}a^\ast_{\T,n}(\xi)\J^{(n)}=b^\ast_{\T,n}(\J\xi).
    \end{equation*}
\end{lemma}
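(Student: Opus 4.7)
The plan is to prove the three assertions in order, reducing the last (main) identity to a direct algebraic computation on simple tensors.

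For the first claim, the fact that $\J$ maps left-bounded vectors to right-bounded vectors with $R_\phi(\J\xi) = \J L_\phi(\xi) \J$ was already used implicitly in the proof that $\J^{(2)}$ is well-defined, citing \cite[Lemma 4.7]{Wir22b}; I would simply invoke it. For the second claim, the key is the intertwining relation $\J \U_z \J = \U_{\bar z}$ for $z \in \IC$ on suitable domains. This follows from $\J \U_t = \U_t \J$ for $t \in \IR$ combined with antilinearity: on analytic vectors, expanding $\U_t = e^{itA}$ in a power series and using $\J(it A)\J = -it (\J A \J)$ yields $\J A \J = -A$ (recorded in spirit by \Cref{lem:antisym_J_A} in the semifinite setting), hence $\J \U_z \J = \U_{\bar z}$. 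Applying this with $z = i/2$ gives $\J \U_{i/2} \J\xi = \U_{-i/2}\xi$, which equals $\J\xi$ since applying $\J$ to the hypothesis $\J\U_{-i/2}\xi = \xi$ yields $\U_{-i/2}\xi = \J\xi$.

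For the main identity, I would verify it first on the dense subspace of simple tensors of bounded vectors. If $\eta_1, \dots, \eta_n \in \H$ are bounded, then using the explicit formulas for $\J^{(n)}$ and $\J^{(n+1)}$ stated before the lemma,
\begin{align*}
\J^{(n+1)} a_{\T,n}^\ast(\xi) \J^{(n)}(\eta_1 \otimes_\phi \cdots \otimes_\phi \eta_n)
&= \J^{(n+1)}\bigl(\xi \otimes_\phi \J\eta_n \otimes_\phi \cdots \otimes_\phi \J\eta_1\bigr) \\
&= \eta_1 \otimes_\phi \cdots \otimes_\phi \eta_n \otimes_\phi \J\xi \\
&= b_{\T,n}^\ast(\J\xi)(\eta_1 \otimes_\phi \cdots \otimes_\phi \eta_n),
\end{align*}
where the use of $\J\J = 1$ and the definitions of $a_{\T,n}^\ast$ and $b_{\T,n}^\ast$ make the identity transparent at the level of $\H^{\otimes_\phi(n+1)}$.

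The only point requiring real care is the passage from $\H^{\otimes_\phi n}$ to $\H_{\T,n}$: I need to know that the displayed equality descends to equivalence classes modulo $\ker P_{\T,n+1}$ and extends by continuity. The first is guaranteed by the preceding lemma, which gives $[P_{\T,n}, \J^{(n)}] = 0$ (using that $\T$ is braided and compatible), so $\J^{(n)}$ preserves $\ker P_{\T,n}$ and descends to $\H_{\T,n}$ as an anti-unitary. The second holds because $a_{\T,n}^\ast(\xi)$ is bounded by \Cref{lem:norm_creation_op}, and $b_{\T,n}^\ast(\J\xi)$ is bounded by the analogous estimate based on the braided-twist version of \Cref{lem:bound_twisted_norm_tensor_product} (with $1 \otimes_\phi P_{\T,n}$ replaced by $P_{\T,n} \otimes_\phi 1$), using that $\J\xi$ is right bounded. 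Hence both sides of the identity are bounded operators from $\H_{\T,n}$ to $\H_{\T,n+1}$ that agree on the dense set of (classes of) simple bounded tensors, and so agree everywhere. The main obstacle I anticipate is simply the bookkeeping around these quotient maps; the underlying computation is essentially formal.
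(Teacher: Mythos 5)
Your proposal is correct and follows essentially the same route as the paper: cite \cite[Lemma 4.7]{Wir22b} for the boundedness statement, use the intertwining $\J\U_z=\U_{\bar z}\J$ together with the hypothesis $\J\U_{-i/2}\xi=\xi$ for the second claim, and verify the main identity by a direct computation with $\J^{(n)}$, $\J^{(n+1)}$ on (dense sets of) tensors. Your extra care about descending to $\H_{\T,n}$ modulo $\ker P_{\T,n}$ and extending by continuity is sound but the paper treats it as implicit in the construction of $\J^{(n)}$ as an operator on $\H_{\T,n}$.
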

\begin{proof}
    The part regarding left and right boundedness is \cite[Lemma 4.7]{Wir22b}. Moreover, $\J\U_{z}=\U_{\bar z}\J$ for $z\in\IC$ follows from the definition of Tomita correspondence. Thus $\J\U_{i/2}\J=\J(\J\U_{-i/2})$. This settles the first part.

    Moreover, if $\eta\in\H_{\T,n}$, then
    \begin{equation*}
        \J^{(n+1)}a^\ast_{\T,n}(\xi)\J^{(n)}\eta=\J^{(n+1)}(\xi\otimes_\phi\J^{(n)}\eta)=\eta\otimes_\phi\J\xi=b^\ast_{\T,n}(\J\xi)\eta.\qedhere
    \end{equation*}
\end{proof}

For the next definition recall that two (possibly unbounded) self-adjoint operators $x$ and $y$ are said to \emph{commute strongly} if $f(x)$ and $g(y)$ commute for all bounded Borel functions $f,g\colon \IR\to \IR$. If $x$ and $y$ are bounded, this notion reduces to $xy=yx$.

\begin{definition}
    We call a compatible braided twist on $\H\otimes_\phi\H$ \emph{local} if for all $\xi,\eta\in \H$ such that $\xi$ is left bounded and $\J\U_{-i/2}\xi=\xi$ and $\eta$ is right bounded and $\J\U_{i/2}\eta=\eta$, the operators $s_\T(\xi)$ and $d_\T(\eta)$ commute strongly.
\end{definition}

\begin{remark}
    As shown in the proof of \Cref{lem:field_op_ess_self-adj}, for every $n\in\mathbb N$ the subspace $L^2(M)\oplus \bigoplus_{j=1}^n \H_{\T,j}$ of $\F_\T(\H)$ consists of analytic vectors for $s_\T(\xi)$ for every left-bounded vector $\xi\in \dom(\U_{-i/2})$ with $\J\U_{-i/2}\xi=\xi$. The same is true if one replaces $s_\T(\xi)$ by $d_\T(\eta)$ for $\eta\in \dom(\U_{i/2})$ right-bounded with $\J\U_{i/2}\eta=\eta$. Therefore, the operators $s_\T(\xi)$ and $d_\T(\eta)$ commute strongly if and only if $[a_{\T,n}^\ast(\xi)+a_{\T,n}(\xi),b^\ast_{\T,n}(\eta)+b_{\T,n}(\eta)]=0$ for all $n\in\mathbb N$ by \cite[Corollary 9.2]{Nel59}.
\end{remark}

In the following we show that the twists discussed in the examples in the previous section are all local.

\begin{example}
The twist $\T=0$ is always local, as was proved in \cite[Lemma 4.10]{Wir22b}. Hence the free case of Shlyakhtenko's von Neumann algebras generated by operator-valued semicircular variables is covered by our setting (see also Subsection \ref{sec:Shlyakhtenko}).
\end{example}

\begin{example}
    As in Example \ref{Ex:twist_trivial_bimod} let $\H=L^2(M)\otimes \ell^2(I)$ with left and right action $x(\xi\otimes e_i)y=x\xi y\otimes e_i$, $\T:=1_{L^2(M)}\otimes T$ for a braided compatible twist $T$ on $\ell^2(I)\otimes\ell^2(I)$, $\J=J_\phi\otimes J$, where $J$ is the complex conjugation on $\ell^2(I)$, and $\U_t=\Delta_\phi^{it}\otimes U_t$ for some strongly unitary group $(U_t)$ on $\ell^2(I)$ such that $[U_t,J]=0$ for all $t\in\IR$. As discussed there, the operator $\T$ is a braided compatible twist under these assumptions.
    
    We next verify that $\T$ is local if and only if $T$ is local. Many interesting examples of local twists $T$ on $\ell^2(I)\otimes\ell^2(I)$ can be found in \cite{CL23}.

    A direct computation shows that $\J\U_{-i/2}$ is the closure of the operator with domain $\operatorname{lin}\{\xi\otimes \eta\mid \xi\in \dom(\Delta_\phi^{1/2}),\,\eta\in\dom(U_{-i/2})\}$ that acts as
    \begin{equation*}
        \xi\otimes\eta\mapsto J_\phi \Delta_\phi^{1/2}\xi\otimes JU_{-i/2}\eta.
    \end{equation*}
    Let $(e_i)$ be an orthonormal basis of the real Hilbert space $\ker(JU_{-i/2}-1)$. It follows that every $\xi\in \dom(\U_{-i/2})$ with $\J\U_{-i/2}\xi=\xi$ can be expressed as
    \begin{equation*}
        \xi=\sum_i\xi_i\otimes e_i
    \end{equation*}
    for vectors $\xi_i\in \dom(\Delta_\phi^{1/2})$ with $J_\phi\Delta_\phi^{1/2}\xi_i=\xi_i$ and $\sum_i \lVert \xi_i\rVert^2<\infty$. Moreover, $\sum_i \xi_i\otimes e_i$ is left-bounded if and only if $\xi_i$ is left-bounded for every $i$ and $\sum_i L_\phi(\xi_i)^\ast L_\phi(\xi_i)$ converges strongly. Hence every left-bounded vector $\xi\in\dom(\U_{-i/2})$ with $\J\U_{-i/2}\xi=\xi$ is of the form $\xi=\sum_i \Lambda_\phi(x_i)\otimes e_i$ with $x_i\in \n_\phi$ self-adjoint such that $\sum_i x_i^2$ converges strongly.

    An analogous argument shows that there exists an orthonormal basis $(f_i)$ of the real Hilbert space $\ker(J U_{i/2}-1)$ with $JU_{i/2}f_i=f_i$ such that every right-bounded vector $\eta\in \dom(\U_{i/2})$ with $\J\U_{i/2}\eta=\eta$ is of the form $\eta=\sum_i \Lambda_\phi^\prime(y_i)\otimes f_i$ with $y_i\in\n_\phi^\ast$ self-adjoint such that $\sum_i y_i^2$ converges strongly.

    For $\xi$ and $\eta$ as above we have
    \begin{align*}
        a_{\T,n}^\ast(\xi)+a_{\T,n}(\xi)&=\sum_i x_i\otimes s_{T,n}(e_i),\\
        b_{\T,n}^\ast(\xi)+b_{\T,n}(\xi)&=\sum_i J_\phi y_i J_\phi\otimes d_{T,n}(f_i).
    \end{align*}
    Since $J_\phi M J_\phi=M^\prime$, the two operators commute if and only if $[s_{T,n}(e_i),d_{T,n}(f_j)]=0$ for all $i,j$. Hence  $T$ is local if and only if $\T$ is local.
    \end{example}
\begin{example}
   Note in Example \ref{Ex:twist_group_alg} that $(\H_\pi,\J_\pi,(\U_t))$ is a Tomita correspondence over $(L(G),\tau)$ and $\T:=1_{\ell^2(G)}\otimes T$ on $\H_\pi\otimes \H_\pi$ is compatible if and only if $T$ is compatible. Further, as in the example above, one can verify that $\T$ is local if and only $T$ is local. 
\end{example}
\begin{example}
Let $(\H,\J,(\U_t))$ be a Tomita correspondence over $(M,\phi)$ such that $\H$ is centered and $\T$ be the flip map on $\H\otimes_\phi\H$. Then $q\T$ for $q\in[-1,1]$ is compatible as discussed in \Cref{Ex:Twist_Centered_Bimod}. Further, it can be shown as in the case of $q$-Gaussian algebras \cite{BKS97} that $q\T$ is local.  
\end{example}
We now turn to the weight on $\Gamma_\T(\H,\J,(\U_t))$ induced by $\phi$ and its modular theory. We first note that $M\subset \Gamma_\T(\H,\J,(\U_t))$ is an inclusion with expectation.

Let $\iota_\T\colon L^2(M)\to \F_\T(\H)$ denote the inclusion map and 
\begin{equation*}
    E_\T\colon \Gamma_\T(\H,\J,(\U_t))\to B(L^2(M)),\,x\mapsto\iota_\T^\ast x\iota_\T
\end{equation*}
Since $\iota_\T$ is a bimodule map, $E_\T$ maps bounded right module maps on $\F_\T(\H)$ to bounded right module maps on $L^2(M)$. In particular, $E_\T(\Gamma_\T(\H,\J,(\U_t)))= M$ and $E_\T$ is a normal conditional expectation. Let $\hat\phi_\T=\phi\circ E_\T$ on $\Gamma_\T(\H,\J,(\U_t))$.

To show that $E_\T$ and $\hat\phi_\T$ are faithful, we will use the following two lemmas.
\begin{lemma}[{\cite[Lemma IX.3.3]{Tak03}}]
    Let $\mathcal{K}$ be a correspondence from M to itself. The map $L_\phi$ is a bijection from the set of left $\phi$-bounded vectors in $\mathcal{K}$ onto the set of all right module maps $T\colon L^2(M)\to \mathcal{K}$ that satisfy $\phi(T^\ast T)<\infty$.
\end{lemma}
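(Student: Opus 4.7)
The plan is to verify that $L_\phi$ is well-defined into the claimed codomain, injective, and surjective. Throughout, fix an increasing net of positive contractions $(e_\alpha)$ in $\n_\phi \cap \n_\phi^\ast$ with $e_\alpha \uparrow 1$, which exists by semi-finiteness of $\phi$.

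For well-definedness, given $\xi \in \mathcal{K}$ left $\phi$-bounded, I would first check that $L_\phi(\xi)$ is a right module map by verifying modularity on the dense subspace $\Lambda_\phi^\prime(\n_\phi \cap \n_\phi^\ast) \subset L^2(M)$: from $\Lambda_\phi^\prime(yz) = \Lambda_\phi^\prime(y) \cdot z$ (which follows from $\Lambda_\phi^\prime$ being the right GNS map) together with associativity of the bimodule action $\xi (yz) = (\xi y) z$, one obtains $L_\phi(\xi)(\Lambda_\phi^\prime(y) \cdot z) = (L_\phi(\xi)\Lambda_\phi^\prime(y)) \cdot z$, and continuous extension handles the remainder of $L^2(M)$. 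To show $\phi(L_\phi(\xi)^\ast L_\phi(\xi)) < \infty$, set $h := L_\phi(\xi)^\ast L_\phi(\xi) \in M_+$ (identifying $M$ with $\pi_\phi(M)$) and compute $\|\xi e_\alpha\|^2 = \langle \Lambda_\phi^\prime(e_\alpha), \pi_\phi(h)\Lambda_\phi^\prime(e_\alpha)\rangle$. Using $\Lambda_\phi^\prime(e_\alpha) = J_\phi \Lambda_\phi(e_\alpha)$ together with the modular identity $J_\phi\pi_\phi(h)J_\phi = \pi_\phi^\prime(h^{\mathrm{op}})$ for self-adjoint $h$, one translates this into an explicit $\phi$-expression in $e_\alpha$ and $h$. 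Normality of the right $M$-action on $\mathcal{K}$ gives $\xi e_\alpha \to \xi$ in norm so that $\|\xi e_\alpha\|^2 \to \|\xi\|^2$, and lower semicontinuity of $\phi$ then forces $\phi(h) \leq \|\xi\|^2 < \infty$.

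Injectivity is immediate: if $L_\phi(\xi) = 0$ then $\xi e_\alpha = L_\phi(\xi)\Lambda_\phi^\prime(e_\alpha) = 0$ for every $\alpha$, and passing to the limit gives $\xi = 0$.

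For surjectivity, given a right module map $T\colon L^2(M) \to \mathcal{K}$ with $\phi(T^\ast T) < \infty$, I would set $\xi_\alpha := T\Lambda_\phi^\prime(e_\alpha)$ and verify that $(\xi_\alpha)$ is Cauchy via the same $\phi$-expression, exploiting normality of $\phi$ to see that the relevant $\phi$-term vanishes as $\alpha, \beta \to \infty$. Letting $\xi := \lim_\alpha \xi_\alpha$, for $y \in \n_\phi \cap \n_\phi^\ast$ right modularity of $T$ combined with the convergence $\Lambda_\phi^\prime(e_\alpha y) \to \Lambda_\phi^\prime(y)$ in $L^2(M)$ (a consequence of $y^\ast e_\alpha \to y^\ast$ in the $L^2$-norm, itself coming from normality of $\phi$ applied to $\phi((1-e_\alpha)yy^\ast(1-e_\alpha)) \to 0$) yields $\xi y = T\Lambda_\phi^\prime(y)$, so $\xi$ is left $\phi$-bounded with $L_\phi(\xi) = T$. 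The main obstacle is the explicit computation of $\|T\Lambda_\phi^\prime(e_\alpha)\|^2$ as a manifest $\phi$-integral of a positive element built from $e_\alpha$ and $h$; this step depends critically on manipulating the modular conjugation $J_\phi$ to pass between the left and right GNS embeddings of $L^2(M)$.
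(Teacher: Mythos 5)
The paper offers no proof of this lemma at all --- it is quoted directly from \cite[Lemma IX.3.3]{Tak03}, with the companion identity $\phi(L_\phi(\xi)^\ast L_\phi(\xi))=\norm{\xi}^2$ quoted from \cite[Lemma 4.14]{Wir22b} --- so your attempt can only be judged against the standard argument. Your architecture (well-definedness, injectivity via $\xi e_\alpha\to\xi$, surjectivity via a limit of $T\Lambda_\phi^\prime(e_\alpha)$) is the right one, and the injectivity and right-modularity parts are fine up to routine approximation.

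The genuine gap sits exactly at the step you defer to ``modular manipulations.'' You claim that $J_\phi\pi_\phi(h)J_\phi=\pi_\phi^\prime(h^{\mathrm{op}})$ converts $\langle\Lambda_\phi^\prime(e_\alpha),\pi_\phi(h)\Lambda_\phi^\prime(e_\alpha)\rangle$ into ``a manifest $\phi$-integral of a positive element built from $e_\alpha$ and $h$,'' to which lower semicontinuity applies. That is the tracial-case picture: for a general weight, $\pi_\phi^\prime(h^{\mathrm{op}})\Lambda_\phi(e_\alpha)$ is not $\Lambda_\phi(e_\alpha h)$ but (for analytic $h$) $\Lambda_\phi(e_\alpha\sigma^\phi_{-i/2}(h))$, and no expression of the form $\phi(\text{word in }e_\alpha,h)$ emerges. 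The identity actually needed is the commutation relation between the two GNS embeddings: $\pi_\phi(b)\Lambda_\phi^\prime(y)=\pi_\phi^\prime(y^{\mathrm{op}})\Lambda_\phi(b)$ for $b\in\n_\phi$, $y\in\n_\phi^\ast$. Applied with $b=h^{1/2}$ it gives $\norm{\pi_\phi(h^{1/2})\Lambda_\phi^\prime(e_\alpha)}^2=\langle\Lambda_\phi(h^{1/2}),J_\phi\pi_\phi(e_\alpha^2)J_\phi\Lambda_\phi(h^{1/2})\rangle\to\phi(h)$ --- but this presupposes $h^{1/2}\in\n_\phi$, which is what you are trying to prove in the forward direction, so a truncation of $h$ is required there to obtain $\liminf_\alpha\norm{\xi e_\alpha}^2\geq\phi(h)$. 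The same issue infects your surjectivity step: the Cauchy property of $T\Lambda_\phi^\prime(e_\alpha)$ does not follow from ``normality of $\phi$ applied to the relevant $\phi$-term,'' but from this commutation identity together with the hypothesis $a^{1/2}\in\n_\phi$, which turns $\norm{T\Lambda_\phi^\prime(e_\alpha-e_\beta)}^2$ into $\norm{\pi_\phi^\prime((e_\alpha-e_\beta)^{\mathrm{op}})\Lambda_\phi(a^{1/2})}^2$. The cleanest repair for surjectivity avoids the net entirely: take the polar decomposition $T=v\,\pi_\phi(a^{1/2})$ with $v$ still a right module map, set $\xi:=v\Lambda_\phi(a^{1/2})$, and check $\xi y=v\pi_\phi(a^{1/2})\Lambda_\phi^\prime(y)=T\Lambda_\phi^\prime(y)$ via the same identity. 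In short, your plan is the standard one, but the manipulation you propose at the crux is valid only when $\phi$ is a trace.
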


\begin{lemma}[{\cite[Lemma 4.14]{Wir22b}}]\label{lem:norm_left_bdd_vector}
     Let $\mathcal{K}$ be a correspondence from M to itself. If $\xi\in \mathcal{K}$ is left $\phi$-bounded, then $\phi(L_\phi(\xi)^\ast L_\phi(\xi))=\norm{\xi}_\mathcal{K}^2.$
\end{lemma}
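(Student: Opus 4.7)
The plan is to identify $a := L_\phi(\xi)^\ast L_\phi(\xi)\in\L(L^2(M)_M)\cong\pi_\phi(M)$ as a positive element of $M$ and to compute $\phi(a)$ by testing against an approximate identity and passing to the limit. By the preceding lemma, $\phi(a)<\infty$, so $a\in\mathfrak p_\phi$ and in particular $a^{1/2}\in\n_\phi\cap\n_\phi^\ast$.

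The key step is to establish the identity
\begin{equation*}
    \phi(a^{1/2}\, y y^\ast\, a^{1/2}) = \norm{\xi y}_{\mathcal K}^2 \qquad\text{for every } y\in\n_\phi\cap\n_\phi^\ast,
\end{equation*}
by computing $\norm{L_\phi(\xi)\Lambda_\phi^\prime(y)}^2$ in two ways. Directly from the definition one has $L_\phi(\xi)\Lambda_\phi^\prime(y) = \xi y$, so the norm squared equals $\norm{\xi y}_{\mathcal K}^2$. Alternatively, by the spectral theorem $\norm{L_\phi(\xi)\Lambda_\phi^\prime(y)}^2 = \langle\Lambda_\phi^\prime(y),\pi_\phi(a)\Lambda_\phi^\prime(y)\rangle = \norm{\pi_\phi(a^{1/2})\Lambda_\phi^\prime(y)}^2$. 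Combining the intertwining relation $\pi_\phi(a^{1/2})J_\phi = J_\phi\pi_\phi^\prime((a^{1/2})^{\op})$ (a consequence of $\pi_\phi^\prime(x^\op) = J_\phi\pi_\phi(x)^\ast J_\phi$ with $a^{1/2}$ self-adjoint) with the fact that $y^\ast a^{1/2}\in\n_\phi$ (since $\n_\phi$ is a left ideal containing $a^{1/2}$) yields $\pi_\phi(a^{1/2})\Lambda_\phi^\prime(y) = J_\phi\Lambda_\phi(y^\ast a^{1/2})$, whose norm squared is $\phi(a^{1/2} y y^\ast a^{1/2})$.

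Next, by semi-finiteness of $\phi$ I would pick an increasing net $(u_\alpha)\subset\mathfrak p_\phi$ with $u_\alpha\nearrow 1$ and set $y_\alpha := u_\alpha^{1/2}\in\n_\phi\cap\n_\phi^\ast$. Operator monotonicity of the square root together with Vigier's theorem yields $y_\alpha\nearrow 1$ strongly. Since $(a^{1/2} u_\alpha a^{1/2})_\alpha$ is increasing with supremum $a$, normality of $\phi$ gives $\phi(a^{1/2} u_\alpha a^{1/2})\nearrow\phi(a)$, while strong continuity of the (normal) right action of $M$ on $\mathcal K$ gives $\xi y_\alpha\to\xi$ and hence $\norm{\xi y_\alpha}_{\mathcal K}^2\to\norm{\xi}_{\mathcal K}^2$. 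The key identity forces these two limits to coincide, giving $\phi(a) = \norm{\xi}_{\mathcal K}^2$.

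The main technical point is unwinding $\pi_\phi(a^{1/2})\Lambda_\phi^\prime(y)$ via the right GNS formalism; once this is in place (with $a\in\mathfrak p_\phi$ from the preceding lemma guaranteeing $y^\ast a^{1/2}\in\n_\phi$), the remainder is a routine normality-plus-approximate-identity argument.
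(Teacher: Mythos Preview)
Your key identity $\phi(a^{1/2}\,yy^\ast\,a^{1/2}) = \norm{\xi y}_{\mathcal K}^2$ is false in general, and the step that breaks is the claim $\pi_\phi(a^{1/2})\Lambda_\phi^\prime(y) = J_\phi\Lambda_\phi(y^\ast a^{1/2})$. Unwinding, this amounts to asserting $\pi_\phi^\prime((a^{1/2})^{\op})\Lambda_\phi(y^\ast) = \Lambda_\phi(y^\ast a^{1/2})$, i.e.\ that the right action of $a^{1/2}$ on $L^2(M)$ agrees with right multiplication inside $\n_\phi$. That is only true when $a^{1/2}$ lies in the centralizer $M^\phi$; for a general weight the two differ by a modular twist. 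A concrete counterexample: take $M=M_2(\IC)$, $\phi=\mathrm{Tr}(h\,\cdot\,)$ with $h=\mathrm{diag}(1,4)$, $a=\begin{pmatrix}1&1\\1&1\end{pmatrix}$, and $yy^\ast=\begin{pmatrix}0&0\\0&1\end{pmatrix}$; then $\langle\Lambda_\phi^\prime(y),\pi_\phi(a)\Lambda_\phi^\prime(y)\rangle=4$ while $\phi(a^{1/2}yy^\ast a^{1/2})=\tfrac{5}{2}$.

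The fix is not to aim for a $\phi$-expression at each fixed $y$, but to pass to the limit directly. Since $a^{1/2}\in\n_\phi$, the vector $\Lambda_\phi(a^{1/2})$ is left-bounded with $\pi_l(\Lambda_\phi(a^{1/2}))=\pi_\phi(a^{1/2})$, and the Hilbert-algebra identity $\pi_l(\zeta)\eta=\pi_r(\eta)\zeta$ gives
\[
\pi_\phi(a^{1/2})\Lambda_\phi^\prime(y)=\pi_\phi^\prime(y^{\op})\Lambda_\phi(a^{1/2}).
\]
Now choose your net $y_\alpha\to 1$ strongly; normality of $\pi_\phi^\prime$ yields $\pi_\phi^\prime(y_\alpha^{\op})\Lambda_\phi(a^{1/2})\to\Lambda_\phi(a^{1/2})$, whose norm squared is $\phi(a)$. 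Combined with $\norm{\xi y_\alpha}_{\mathcal K}^2\to\norm{\xi}_{\mathcal K}^2$ this gives the result. (The paper itself does not supply a proof here, citing \cite{Wir22b} instead, but this is the natural argument.)
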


\begin{proposition}\label{prop:standard_form_Araki-Woods}
    If $\T$ is a local braided compatible twist on $\H\otimes_\phi \H$, then the map
    \begin{equation*}
        \Lambda_\T\colon n_{\hat \phi_\T}\to \F_\T(\H),\,x\mapsto L_\phi^{-1}(x\iota_\T)
    \end{equation*}
    is injective and has dense range. Moreover, if $x,y\in \n_{\hat\phi_\T}$, then
    \begin{equation*}
        \langle\Lambda_\T(x),\Lambda_\T(y)\rangle=\hat\phi_\T(x^\ast y)
    \end{equation*}
    and
    \begin{equation*}
        \Lambda_\T(xy)=x\Lambda_\T(y).
    \end{equation*}
    In particular, $E_\T$ and $\hat\phi_\T$ are faithful.
\end{proposition}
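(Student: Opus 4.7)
My plan is to establish the three listed properties of $\Lambda_\T$ in turn and then extract injectivity and faithfulness. First I would verify well-definedness: for $x\in\n_{\hat\phi_\T}$, the composition $x\iota_\T\colon L^2(M)\to\F_\T(\H)$ is a bounded right $M$-module map (since $\iota_\T$ is bimodular and $x$ commutes with the right $M$-action), and
\begin{equation*}
\phi((x\iota_\T)^\ast(x\iota_\T))=\phi(\iota_\T^\ast x^\ast x\iota_\T)=\phi(E_\T(x^\ast x))=\hat\phi_\T(x^\ast x)<\infty.
\end{equation*}
The first lemma recalled just before the statement (\cite[Lemma IX.3.3]{Tak03}) then yields a unique left $\phi$-bounded $\Lambda_\T(x)\in\F_\T(\H)$ with $L_\phi(\Lambda_\T(x))=x\iota_\T$. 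The inner product formula is immediate from \Cref{lem:norm_left_bdd_vector} and polarization. For the module property I would use that $(xy)\iota_\T=xL_\phi(\Lambda_\T(y))$ is a right $M$-module map, and evaluating on $\Lambda_\phi'(z)$ one sees this equals $L_\phi(x\Lambda_\T(y))$, so $\Lambda_\T(xy)=x\Lambda_\T(y)$.

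For density of the image I would iterate the field operators starting from vacuum-like vectors. Every $e\in\n_\phi$ lies in $\n_{\hat\phi_\T}\cap M$ with $\Lambda_\T(e)=\iota_\T(\Lambda_\phi(e))$, so the image already contains a dense subset of $L^2(M)\subset\F_\T(\H)$. Given $\xi_1,\dots,\xi_n$ in the real subspace $\H_\IR=\{\xi:\xi$ left-bounded and $\J\U_{-i/2}\xi=\xi\}$ and $e\in\n_\phi\cap\n_\phi^\ast$, the vector $s_\T(\xi_1)\cdots s_\T(\xi_n)\iota_\T(\Lambda_\phi(e))$ makes sense in the common analytic domain provided by \Cref{lem:field_op_ess_self-adj}; after expanding each $s_\T(\xi_j)=a_\T^\ast(\xi_j)+a_\T(\xi_j)$ and using that $a_\T(\xi_j)$ annihilates $L^2(M)$, its $\H_{\T,n}$-component equals $\xi_1\otimes_\phi\cdots\otimes_\phi\xi_n\cdot e$ modulo $\ker P_{\T,n}$, with strictly lower-degree remainders. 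Using resolvents (or other bounded functional calculus) of the $s_\T(\xi_j)$ together with the module property already established, I can realize these vectors as $\Lambda_\T(x)$ for genuine $x\in\n_{\hat\phi_\T}$. Since $\H_\IR+i\H_\IR$ is dense in $\H$ and $\Lambda_\phi(\n_\phi\cap\n_\phi^\ast)$ is dense in $L^2(M)$, the top-degree pieces sweep out a dense subset of each $\H_{\T,n}$, and a triangular induction on $n$ gives density of $\Lambda_\T(\n_{\hat\phi_\T})$ in $\F_\T(\H)$.

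For injectivity, assume $\Lambda_\T(x)=0$, i.e.\ $x\iota_\T=0$. Let $\Gamma_\T^r$ denote the von Neumann algebra on $\F_\T(\H)$ generated by the right $M$-action together with the self-adjoint operators $d_\T(\eta)$ for $\eta$ right-bounded with $\J\U_{i/2}\eta=\eta$. Locality of $\T$ places $\Gamma_\T^r$ inside the commutant of $\Gamma_\T$, so $xy\iota_\T(\zeta)=yx\iota_\T(\zeta)=0$ for every $y\in\Gamma_\T^r$ and $\zeta\in L^2(M)$. A mirror of the density argument---iterating the right creation operators $d_\T(\eta_j)$ on $\iota_\T(\Lambda_\phi(e))$ and using via \Cref{lem:intertwining_left_right_creation} that $\J\H_\IR+i\J\H_\IR$ is dense in $\H$---shows that $\Gamma_\T^r\iota_\T(L^2(M))$ is dense in $\F_\T(\H)$, forcing $x=0$. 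Faithfulness of $\hat\phi_\T$ is then immediate from $\|\Lambda_\T(x)\|^2=\hat\phi_\T(x^\ast x)$ together with injectivity of $\Lambda_\T$, and faithfulness of $E_\T$ follows because $\phi$ is faithful on $M$.

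The hardest step will be the density claim (and its mirror used in the injectivity part): one must realize the formal products $s_\T(\xi_1)\cdots s_\T(\xi_n)\iota_\T(\Lambda_\phi(e))$ as $\Lambda_\T(x)$ for a bona fide $x\in\n_{\hat\phi_\T}$ despite the field operators being possibly unbounded, and then run the triangular induction so that the lower-order remainders produced by the annihilation parts are absorbed into the inductive hypothesis.
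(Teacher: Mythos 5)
Your proposal is correct and follows essentially the same route as the paper: the well-definedness computation, the polarization argument for the inner product, and the right-module-map argument for $\Lambda_\T(xy)=x\Lambda_\T(y)$ are identical, and for injectivity and density the paper simply defers to \cite[Theorem 4.15]{Wir22b}, whose argument is precisely the iterated-field-operator/commutant scheme you sketch (with the unboundedness handled by spectral cutoffs $\1_{[-R,R]}(s_\T(\xi))s_\T(\xi)$, as the paper itself does later in the proof of \Cref{thm:mod_theory_twisted}). You have correctly identified the one genuinely technical step and the standard device for resolving it.
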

\begin{proof}
    First note that $x\iota$ is a right module map from $L^2(M)$ to $\F_\T(\H)$ and by definition of $\n_{\hat\phi_\T}$ we have
    \begin{equation*}
        \phi((x\iota_\T)^\ast(x\iota_\T))=\hat \phi_\T(x^\ast x)<\infty.
    \end{equation*}
    Thus $\Lambda_\T$ is well-defined. Moreover,
    \begin{equation*}
        \norm{\Lambda_\T(x)}^2=\phi((x\iota_\T)^\ast(x\iota_\T))=\hat\phi_{\T}(x^\ast x).
    \end{equation*}
    By polarization, we obtain $\langle\Lambda_\T(x),\Lambda_\T(y)\rangle=\hat\phi_\T(x^\ast y)$ for $x,y\in\n_{\hat\phi_\T}$.

    Moreover, since $x$ is a right module map, 
    \begin{equation*}
        (xL_\phi^{-1}(y\iota_\T))m=x(L_\phi^{-1}(y\iota_\T)m)=xy\Lambda_\phi^\prime(m)
    \end{equation*}
    for all $m\in \n_\phi^\ast$. Thus $x L_\phi^{-1}(y\iota_\T)$ is left-bounded with $L_\phi(x L_\phi^{-1}(y\iota_\T))=xy\iota_\T$. Hence
    \begin{equation*}
        x\Lambda_\T(y)=xL_\phi^{-1}(y\iota_\T)=L_\phi^{-1}(xy\iota_\T)=\Lambda_\T(xy).
    \end{equation*} 
    Injectivity and density of the range can be shown as in \cite[Theorem 4.15]{Wir22b}.

    If $\hat\phi_\T(x^\ast x)=0$, the $\Lambda_\T(x)=0$ as shown above. Since $\Lambda_\T$ is injective, it follows that $x=0$. Hence $\hat\phi_\T$ and thus also $E_\T$ are faithful.
\end{proof}

As a consequence, the map $\Lambda_\T(x)\mapsto \Lambda_{\hat\phi_\T}(x)$ extends to a unitary operator from $\F_\T(\H)$ to $L^2((\Gamma_\T(\H,\J,(\U_t)),\hat\phi_\T)$. Thus $\hat\phi_\T$ is a normal semi-finite faithful weight on $\Gamma_\T(\H,\J,(\U_t))$ and the associated left Hilbert algebra is isomorphic to $\Lambda_\T(\n_{\hat\phi_\T}\cap \n_{\hat\phi_\T}^\ast)$ with product and involution given by $\Lambda_\T(x)\Lambda_\T(y)=\Lambda_\T(xy)$ and $\Lambda_\T(x)^\sharp=\Lambda_\T(x^\ast)$.

We will next characterize the associated modular theory. To do so, it is convenient to introduce the dual objects corresponding to the right field operators. We continue to assume that $\T$ is a local compatible braided twist.

Let $\Gamma_\T^\prime(\H,\J,(\U_t))$ be the von Neumann algebra generated by the right action of $M$ and the operators $d_\T(\eta)$ for $\eta\in \H$ right-bounded with $\J\U_{i/2}\eta=\eta$. The locality assumption implies $\Gamma_\T^\prime(\H,\J,(\U_t))\subset \Gamma_\T(\H,\J,(\U_t))^\prime$. Moreover, $E_\T(\Gamma^\prime_\T(\H,\J,(\U_t))=M^\prime$ and $\hat\psi_\T=\phi^\prime\circ E_\T$ defines a normal semifinite weight on $\Gamma_\T^\prime(\H,\J,(\U_t))$. Exactly as above one shows that $\hat\psi_\T$ is faithful and the associated semi-cyclic representation is given by $\Lambda_\T^\prime\colon \n_{\hat\psi_\T}\to\F_\T(\H),\,y\mapsto R_\phi^{-1}(y\iota_\T)$.

\begin{remark}
If $\phi$ is a state and $\Omega\in L^2_+(M)\subset \F_\T(\H)$ the cyclic vector representing $\phi$, then $\Lambda_\T(x)=x\Omega$ for $x\in \Gamma_\T(\H,\J,(\U_t))$ and likewise $\Lambda_\T^\prime(y)=y\Omega$ for $y\in \Gamma_\T^\prime(\H,\J,(\U_t))$. Furthermore, $\hat\phi_\T$ and $\hat\psi_\T$ are the restriction of the vector state $\langle\Omega,\cdot\,\Omega\rangle$ to $\Gamma_\T(\H,\J,(\U_t))$ and $\Gamma_\T^\prime(\H,\J,(\U_t))$, respectively. The conditional expectation $E_\T$ restricts to $x\mapsto L_\phi(\Omega)^\ast x L_\phi(\Omega)$ on $\Gamma_\T(\H,\J,(\U_t))$ and to $y\mapsto R_\phi(\Omega)^\ast y R_\phi(\Omega)$ on $\Gamma_\T^\prime(\H,\J,(\U_t))$.
\end{remark}

\begin{lemma}\label{lem:dual_Hilbert_alg}
Assume that $\T$ is a local braided compatible twist on $\H\otimes_\phi\H$. If $x\in \n_{\hat\phi_\T}$ and $y\in \n_{\hat\psi_\T}$, then
\begin{equation*}
    x\Lambda_\T^\prime(y)=y\Lambda_\T(x).
\end{equation*}
In particular, $\Lambda_\T^\prime(y)$ is a right-bounded vector for the left Hilbert algebra $\Lambda_\T(\n_{\hat\phi_\T}\cap\n_{\hat\phi_\T}^\ast)$ and $\pi_r(\Lambda_\T^\prime(y))=y$. Moreover, $\Lambda_\T^\prime(\n_{\hat\psi_\T}\cap\n_{\hat\psi_\T}^\ast)\subset \dom(S^\ast)$ and $S^\ast\Lambda_\T^\prime(y)=\Lambda_\T^\prime(y^\ast)$ for $y\in \n_{\hat\psi_\T}\cap\n_{\hat\psi_\T}^\ast$.
\end{lemma}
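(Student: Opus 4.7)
My plan is to first establish the main identity $x\Lambda_\T'(y) = y\Lambda_\T(x)$ and then deduce the two ``in particular'' statements as formal consequences of standard left Hilbert algebra theory. The key observation is that $x$ and $y$ commute on $\F_\T(\H)$: locality of $\T$ gives $\Gamma_\T^\prime(\H,\J,(\U_t)) \subset \Gamma_\T(\H,\J,(\U_t))^\prime$, so $xy = yx$.

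To prove the main identity, I would exploit $xy = yx$ by evaluating both operators on vectors of the form $\Lambda_\phi(e_\alpha) \in L^2(M) \subset \F_\T(\H)$, where $(e_\alpha)$ is an analytic approximate unit contained in the Tomita algebra $\mathfrak a_0$ of \Cref{Ex:Twist_Centered_Bimod} satisfying $\sigma^\phi_z(e_\alpha) \to 1$ $\sigma$-strongly for every $z \in \IC$ (a standard construction via convolution with Gaussians, see e.g.\ \cite[VIII.2]{Tak03}). The defining identities $y\iota_\T = R_\phi(\Lambda_\T'(y))$ and $x\iota_\T = L_\phi(\Lambda_\T(x))$ from \Cref{prop:standard_form_Araki-Woods}, together with the modular identity $\Lambda_\phi(e_\alpha) = \Lambda_\phi'(\sigma^\phi_{i/2}(e_\alpha))$ for analytic $e_\alpha$ (a direct consequence of $S_\phi = J_\phi\Delta_\phi^{1/2}$), yield
\[ y\Lambda_\phi(e_\alpha) = e_\alpha \cdot \Lambda_\T'(y), \qquad x\Lambda_\phi(e_\alpha) = \Lambda_\T(x) \cdot \sigma^\phi_{i/2}(e_\alpha). \]
Both sides converge in $\F_\T(\H)$ to $\Lambda_\T'(y)$ and $\Lambda_\T(x)$ respectively, by normality of the left and right actions of $M$ on $\F_\T(\H)$ and the $\sigma$-strong convergence of $e_\alpha$ and $\sigma^\phi_{i/2}(e_\alpha)$ to $1$. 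Applying $x$ to the first identity, applying $y$ to the second, using $xy\Lambda_\phi(e_\alpha) = yx\Lambda_\phi(e_\alpha)$, and passing to the limit then gives $x\Lambda_\T'(y) = y\Lambda_\T(x)$.

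The remaining claims are then formal. For any $w \in \n_{\hat\phi_\T}\cap \n_{\hat\phi_\T}^*$, the main identity shows $\pi_l(\Lambda_\T(w))\Lambda_\T'(y) = w\Lambda_\T'(y) = y\Lambda_\T(w)$, so the left-multiplication map $\Lambda_\T(w) \mapsto \pi_l(\Lambda_\T(w))\Lambda_\T'(y)$ on $\Lambda_\T(\n_{\hat\phi_\T}\cap \n_{\hat\phi_\T}^*)$ is just the restriction of the bounded operator $y$; by density of $\Lambda_\T(\n_{\hat\phi_\T}\cap \n_{\hat\phi_\T}^*)$ in $\F_\T(\H)$, this establishes that $\Lambda_\T'(y)$ is right-bounded for the left Hilbert algebra $\Lambda_\T(\n_{\hat\phi_\T}\cap \n_{\hat\phi_\T}^*)$ and that $\pi_r(\Lambda_\T'(y)) = y$. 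For $y \in \n_{\hat\psi_\T}\cap \n_{\hat\psi_\T}^*$, applying this to both $y$ and $y^*$ gives $\pi_r(\Lambda_\T'(y^*)) = y^* = \pi_r(\Lambda_\T'(y))^*$, so by the standard identification of $S^*$ with the closure of the involution on the right Hilbert algebra of right-bounded vectors (\cite[VI.1]{Tak03}), $\Lambda_\T'(y) \in \dom(S^*)$ with $S^*\Lambda_\T'(y) = \Lambda_\T'(y^*)$.

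The main obstacle will be the modular-theoretic bookkeeping: the argument requires an approximate unit that is entire analytic for $\sigma^\phi$ with $\sigma^\phi_{i/2}(e_\alpha)\to 1$ $\sigma$-strongly, since a generic approximate unit in $\n_\phi\cap \n_\phi^*$ would not permit the crucial translation between $\Lambda_\phi$ and $\Lambda_\phi'$ at the heart of the calculation. Once the analytic approximate unit is in hand, all other steps are immediate from the already-established properties of $\Lambda_\T$, $\Lambda_\T'$, $L_\phi$, $R_\phi$ and the density from \Cref{prop:standard_form_Araki-Woods}.
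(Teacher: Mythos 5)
Your proposal is correct and follows essentially the same route as the paper's proof: locality gives $xy=yx$, a Gaussian-mollified analytic approximate unit together with the $\sigma^\phi_{\pm i/2}$ translation between $\Lambda_\phi$ and $\Lambda_\phi'$ yields the main identity in the limit, and the remaining claims are standard left Hilbert algebra theory (which the paper simply cites and you spell out). No gaps.
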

\begin{proof}
Since $\phi$ is semi-finite, there exists a net $(m_\alpha^\prime)$ of self-adjoint contractions in $\n_{\phi}\cap\n_{\phi}^\ast$ such that $m_\alpha^\prime\to 1$ strongly. Let
\begin{equation*}
    m_\alpha=\frac 1{\sqrt \pi}\int_{\IR}e^{-t^2}\sigma^\phi_t(m_\alpha^\prime)\,dt.
\end{equation*}
Then $m_\alpha\in\n_\phi\cap \n_\phi^\ast\cap \dom(\sigma^\phi_z)$ for all $z\in \IC$ and
\begin{equation*}
    \sigma^\phi_{z}(m_\alpha)=\frac 1 {\sqrt \pi}\int_{\IR}e^{-(t-z)^2}\sigma^\phi_t(m_\alpha^\prime)\,dt
\end{equation*}
for $z\in\IR$. It follows from the dominated convergence theorem that $\sigma^\phi_{z}(m_\alpha)\to 1$ strongly for all $z\in \IC$.

Now let $x\in\n_{\hat\phi_\T}$, $y\in \n_{\hat\psi_\T}$. If $n\in \n_\phi^\ast$, then
\begin{equation*}
    L_\phi^{-1}(x\iota_\T)m_\alpha n=L_\phi(L_\phi^{-1}(x\iota_\T))\Lambda_\phi^\prime(m_\alpha n)=x\sigma^\phi_{-i/2}(m_\alpha)\Lambda_\phi^\prime(n).
\end{equation*}
Thus $L_\phi^{-1}(x\iota_\T)m_\alpha=L_\phi^{-1}(x\iota_\T\sigma^\phi_{-i/2}(m_\alpha))=x\Lambda_\phi^\prime(m_\alpha)$. An analogous argument shows that $\sigma^\phi_{-i/2}(m_\alpha)R_\phi^{-1}(y\iota_\T)=y\Lambda_\phi(\sigma^\phi_{-i/2}(m_\alpha))$.

As $\T$ is assumed to be local, the operators $x$ and $y$ commute. Therefore,
\begin{align*}
    y\Lambda_\T(x)&=\lim_\alpha yL_\phi^{-1}(x\iota_\T)m_\alpha\\
    &=\lim_\alpha yx \Lambda_\phi^\prime(m_\alpha)\\
    &=\lim_\alpha xy\Lambda_\phi(\sigma_{-i/2}^\phi(m_\alpha))\\
    &=\lim_\alpha x\sigma^\phi_{-i/2}(m_\alpha) R_\phi^{-1}(y\iota_\T)\\
    &=x\Lambda_\T^\prime(y).
\end{align*}
The remaining statements follow from standard theory of left Hilbert algebras.
\end{proof}

We are now in the position to characterize the modular theory associated with the weight $\hat\phi_\T$. For the proof recall that $\H_0=\{\xi\in\bigcap_{z\in\IC}\dom(\U_z):\U_z\xi\text{ bounded for all }z\in\IC\}$, which is dense in $\H$ by \Cref{prop:bdd_analytic_vectors_dense}.

\begin{theorem}\label{thm:mod_theory_twisted}
    Let $\T$ be a local braided compatible twist on $\H\otimes_\phi\H$. The modular operator $\Delta_{\T}$ and the modular conjugation $J_\T$ associated with the left Hilbert algebra $\Lambda_\T(\n_{\hat\phi_\T}\cap\n_{\hat\phi_\T}^\ast)$ satisfy
    \begin{itemize}
        \item[(a)]$\Delta_\T^{it}=\F_\T(\U_t)$ for all $t\in\IR$,
        \item[(b)]$J_\T=\F_\T(\J)$.
    \end{itemize}
\end{theorem}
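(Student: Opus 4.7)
The strategy is to compute the Tomita operator $S_\T := J_\T\Delta_\T^{1/2}$ of the left Hilbert algebra $\Lambda_\T(\n_{\hat\phi_\T}\cap\n_{\hat\phi_\T}^\ast)$ on the total subset of $\F_\T(\H)$ consisting of simple tensors $\xi_1\otimes_\phi\dots\otimes_\phi\xi_n$ with $\xi_j\in \H_0$, show that on this subset it acts by the candidate formula $\xi_1\otimes_\phi\dots\otimes_\phi\xi_n\mapsto \J\U_{-i/2}\xi_n\otimes_\phi\dots\otimes_\phi \J\U_{-i/2}\xi_1$, and then read off the polar decomposition. The key dense subset is well-defined since $\H_0$ is stable under both $\J$ and $\U_z$: $\J$ preserves boundedness (left-boundedness becoming right-boundedness, with $\U_z\J = \J\U_{\bar z}$ ensuring analyticity), and $\U_z$ clearly preserves $\H_0$.

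For the base case $n=1$, fix $\xi\in\H_0$ with $\J\U_{-i/2}\xi=\xi$. The spectral truncation $s_\T^R(\xi) := \1_{[-R,R]}(s_\T(\xi))s_\T(\xi)$ is a bounded self-adjoint element of $\Gamma_\T(\H,\J,(\U_t))$, and the identity $s_\T(\xi)\iota_\T = L_\phi(\xi)$ combined with \Cref{lem:norm_left_bdd_vector} gives $\hat\phi_\T(s_\T^R(\xi)^2)\leq\norm{\xi}^2$, so $s_\T^R(\xi)\in\n_{\hat\phi_\T}\cap\n_{\hat\phi_\T}^\ast$ with $\Lambda_\T(s_\T^R(\xi)) = \1_{[-R,R]}(s_\T(\xi))\xi$. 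Letting $R\to\infty$ and invoking closedness of $S_\T$ yields $\xi\in\dom(S_\T)$ with $S_\T\xi=\xi$. By splitting $\xi = \tfrac12(\xi+\J\U_{-i/2}\xi) + i\cdot\tfrac1{2i}(\xi-\J\U_{-i/2}\xi)$ into $\J\U_{-i/2}$-fixed components in $\H_0$, anti-linearity gives $S_\T\xi = \J\U_{-i/2}\xi$ for every $\xi\in\H_0$.

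For the inductive step, take $\xi_1,\dots,\xi_n\in\H_0$ with $\J\U_{-i/2}\xi_j = \xi_j$ and consider $X_R := s_\T^R(\xi_1)\cdots s_\T^R(\xi_n)\in\Gamma_\T(\H,\J,(\U_t))$. Expanding each $s_\T^R(\xi_j)$ as a sum of a creation and annihilation part, $X_R\iota_\T$ produces the top-level tensor $\1_{[-R,R]}(\cdots)\xi_1\otimes_\phi\dots\otimes_\phi\xi_n$ together with contributions in $\bigoplus_{k<n}\H_{\T,k}$, while $X_R^\ast\iota_\T$ produces $\xi_n\otimes_\phi\dots\otimes_\phi\xi_1$ plus analogous lower-order terms. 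Passing to the limit $R\to\infty$ (with the analytic-vector bounds from \Cref{lem:field_op_ess_self-adj} justifying convergence even when $\norm\T=1$) and using the induction hypothesis to match the lower-order corrections, closedness of $S_\T$ gives $S_\T(\xi_1\otimes_\phi\dots\otimes_\phi\xi_n) = \xi_n\otimes_\phi\dots\otimes_\phi\xi_1$, and hence the claimed formula by anti-linearity for general $\xi_j\in\H_0$. An entirely parallel argument, using the right field operators $d_\T(\eta)$ for right-bounded $\eta$ with $\J\U_{i/2}\eta=\eta$ (which commute with $\Gamma_\T(\H,\J,(\U_t))$ by locality) together with the dual weight $\hat\psi_\T$ and \Cref{lem:dual_Hilbert_alg}, establishes $S_\T^\ast(\xi_1\otimes_\phi\dots\otimes_\phi\xi_n) = \J\U_{i/2}\xi_n\otimes_\phi\dots\otimes_\phi\J\U_{i/2}\xi_1$.

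From these two formulas, $S_\T^\ast S_\T$ and $\F_\T(\U_{-i})$ agree on the total $\F_\T(\U_z)$-invariant subspace spanned by simple tensors from $\H_0$, which is a core for $\F_\T(\U_{-i})$; by essential self-adjointness this forces $\Delta_\T = \F_\T(\U_{-i})$ and thus $\Delta_\T^{it} = \F_\T(\U_t)$. Inserting into $S_\T = J_\T\Delta_\T^{1/2}$ then gives $J_\T = \F_\T(\J)$ on the same core and, by (anti-)unitarity, everywhere. The main technical obstacle is the bookkeeping in the inductive step: one must carefully identify the top-level contribution to $X_R\iota_\T$, control the lower-order terms (which involve nested annihilation operators and inner products of the $\xi_j$'s), and pass to the limit $R\to\infty$ uniformly on each Fock level, particularly in the unbounded regime $\norm\T=1$ where the estimates of \Cref{lem:field_op_ess_self-adj} must be leveraged rather than direct norm bounds on $a_\T^\ast(\xi)$.
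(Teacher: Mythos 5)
Your base case ($n=1$) and your treatment of $S_\T^\ast$ via the right field operators $d_\T$ and the dual weight $\hat\psi_\T$ coincide with the paper's argument. The divergence is at the higher Fock levels, and there your inductive step has a genuine gap. You propose to show $\xi_1\otimes_\phi\dots\otimes_\phi\xi_n\in\dom(S_\T)$ by ``expanding each $s_\T^R(\xi_j)$ as a sum of a creation and annihilation part'' of $X_R=s_\T^R(\xi_1)\cdots s_\T^R(\xi_n)$. But $s_\T^R(\xi_j)=\1_{[-R,R]}(s_\T(\xi_j))s_\T(\xi_j)$ is \emph{not} a sum of a creation and an annihilation operator: the spectral projection $\1_{[-R,R]}(s_\T(\xi_j))$ does not respect the Fock grading, so $X_R\iota_\T$ is not ``a top-level tensor plus contributions in $\bigoplus_{k<n}\H_{\T,k}$'' --- the expression $\1_{[-R,R]}(\cdots)\xi_1\otimes_\phi\dots\otimes_\phi\xi_n$ you write down does not live in $\H_{\T,n}$. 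Dropping the truncations is not an option precisely in the regime you flag as the hard one: when $\norm{\T}=1$ the product $s_\T(\xi_1)\cdots s_\T(\xi_n)$ is an unbounded operator not belonging to $\Gamma_\T(\H,\J,(\U_t))$, so the Wick-expanded vector $s_\T(\xi_1)\cdots s_\T(\xi_{n-1})\xi_n$ is not of the form $\Lambda_\T(x)$ for $x\in\n_{\hat\phi_\T}\cap\n_{\hat\phi_\T}^\ast$, and the convergence of $X_R\iota_\T$ and $X_R^\ast\iota_\T$ as $R\to\infty$ (products of $n$ truncations applied successively) is exactly what your appeal to \Cref{lem:field_op_ess_self-adj} does not deliver. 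In addition, your final step needs the span of simple tensors from $\H_0$ to be a core for $\F_\T(\U_{-i})$ (otherwise agreement of $S_\T^\ast S_\T$ and $\F_\T(\U_{-i})$ on a common dense domain does not force equality of the self-adjoint operators); this is true but requires an analytic-generator argument you do not supply.

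The paper's proof is structured to avoid all of this: it only ever truncates a \emph{single} field operator. It computes $S_\T$ and $S_\T^\ast$ on $\H_0$ alone, concludes $\Delta_\T^{it}=\U_t$ on the one-particle space via rational powers of $S_\T^\ast S_\T$ and the identity theorem, derives the covariance relation $\sigma^{\hat\phi_\T}_t(p_R(\xi)s_\T(\xi))=\sigma^{\hat\phi_\T}_t(p_R(\xi))s_\T(\U_t\xi)$, and then propagates to $\H_{\T,n+1}$ by induction using the decomposition $\xi_1\otimes_\phi\dots\otimes_\phi\xi_{n+1}=s_\T(\xi_1)(\xi_2\otimes_\phi\dots\otimes_\phi\xi_{n+1})-a_{\T,n}(\xi_1)(\xi_2\otimes_\phi\dots\otimes_\phi\xi_{n+1})$ together with the automorphism property of $\Delta_\T^{it}$; part (b) is handled analogously via $J_\T s_\T(\xi)J_\T=d_\T(\J\xi)$. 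If you want to keep your strategy of computing the Tomita operator on all simple tensors, you would need either a standing boundedness assumption on the creation operators or a substantially more careful approximation scheme for the products of truncated field operators, plus the orthogonality argument ($S_\T(\xi_1\otimes_\phi\dots\otimes_\phi\xi_n)\perp\bigoplus_{k<n}\H_{\T,k}$, obtained from the induction hypothesis for $S_\T^\ast$) needed to isolate the top-level term.
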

\begin{proof}
    Since $\hat\phi_\T=\phi\circ E_\T$ and $E_\T$ is a faithful normal conditional expectation, we have $J_\T\xi=J_\phi \xi$ and $\Delta_\T^{it}\xi=\Delta_\phi^{it}\xi$ for $\xi\in L^2(M)$.

    (a) We write $S_\T=J_\T\Delta_\T^{1/2}$. Let $\xi\in \H_0$ with $\J\U_{-i/2}\xi=\xi$ and let $p_R(\xi)=\1_{[-R,R]}(s_\T(\xi))$. Note that $p_R(\xi)s_\T(\xi)\iota_\T=p_R(\xi)L_\phi(\xi)$. Thus
    \begin{equation*}
        \hat\phi_\T((p_R(\xi)s_\T(\xi))^2)=\phi(\abs{p_R(\xi)s_\T(\xi)\iota_\T}^2)\leq \phi(L_\phi(\xi)^\ast L_\phi(\xi))=\norm{\xi}^2
    \end{equation*}
    by \Cref{lem:norm_left_bdd_vector}.

    As $p_R(\xi)s_\T(\xi)$ is self-adjoint, we have $p_R(\xi)s_\T(\xi)\in \n_{\hat\phi_\T}\cap\n_{\hat\phi_\T}^\ast$. Moreover,
    \begin{equation*}
        \Lambda_\T(p_R(\xi)s_\T(\xi))=L_\phi^{-1}(p_R(\xi)s_\T(\xi)\iota_\T)=p_R(\xi)\xi.
    \end{equation*}
    Hence $p_R(\xi)\xi\in \dom(S_\T)$ and $S_\T(p_R(\xi)\xi)=p_R(\xi)\xi$. Since $S_\T$ is closed, it follows that $\xi\in \dom(S_\T)$ and $S_\T\xi=\xi$. For arbitrary $\xi\in \H_0$ let $\eta=\frac 1 2(\xi+\J\U_{-i/2}\xi)$ and $\zeta=\frac 1{2i}(\xi-\J\U_{-i/2}\xi)$ so that $\eta,\zeta\in\H_0$, $\J\U_{-i/2}\eta=\eta$, $\J\U_{-i/2}\zeta=\zeta$ and $\xi=\eta+i\zeta$. By linearity, $\xi\in \dom(S_\T)$ and 
    \begin{equation*}
    S_\T\xi=S_\T\eta-iS_\T\zeta=\eta-i\zeta=\J\U_{-i/2}\xi.
    \end{equation*}
    Now let $\xi\in \H_0$ with $\J\U_{i/2}\xi=\xi$ and let $q_R(\xi)=\1_{[-R,R]}(d_\T(\xi))d_\T(\xi)$. Arguing as above, we get
    \begin{equation*}
        \hat\psi_\T((q_R(\xi)d_\T(\xi))^2)=\hat\phi^\prime(\abs{q_R(\xi)d_\T(\xi)\iota_\T}^2)\leq\phi^\prime(R_\phi(\xi)^\ast R_\phi(\xi))=\phi(J_\phi R_\phi(\xi)^\ast R_\phi(\xi) J_\phi).
    \end{equation*}
    
    By \cite[Lemma 4.7]{Wir22b}, we have $\J R_\phi(\xi) J_\phi=L_\phi(\J\xi)$. Thus
    \begin{equation*}
        \hat\psi_\T((q_R(\xi)d_\T(\xi))^2)=\phi(L_\phi(\J\xi)^\ast L_\phi(\J\xi))=\norm{\xi}^2.
    \end{equation*}
    It follows that $q_R(\xi)d_\T(\xi)\in \n_{\hat\psi_\T}\cap \n_{\hat\psi_\T^\prime}$. By \Cref{lem:dual_Hilbert_alg}, we have $q_R(\xi)\xi=\Lambda_\T^\prime(q_R(\xi)d_\T(\xi))\in \dom(S_\T^\ast)$ and
    \begin{equation*}
        S_\T^\ast q_R(\xi)\xi=q_R(\xi)\xi,
    \end{equation*}
    and we conclude as above that $\H_0\subset \dom(S_\T^\ast)$ and $S_\T^\ast \xi=\J\U_{i/2}\xi$ for $\xi\in \H_0$.

    If we combine these two facts, we obtain $\H_0\subset \dom(S_\T^\ast S_\T)$ and $S_\T^\ast S_\T\xi=\U_{-i}\xi$ for $\xi\in \H_0$. It follows by induction that $\H_0\subset \dom(\Delta_\T^\alpha)$ and $\Delta_\T^\alpha\xi=\U_{-i\alpha}\xi$ for all $\alpha\in\mathbb Q$. Thus $z\mapsto \U_z \xi$ and $z\mapsto \Delta_\T^{iz}\xi$ are analytic functions that coincide on $i\mathbb Q$. By the identity theorem for analytic function we obtain $\U_z\xi=\Delta_\T^{iz}\xi$ for all $z\in \IC$. As $\H_0\subset \H$ is dense by \Cref{prop:bdd_analytic_vectors_dense}, we conclude $\Delta^{it}_\T\xi=\U_t\xi$ for all $\xi\in \H$.
    
    In particular, if $\xi\in \H_0$ with $\J\U_{-i/2}\xi=\xi$, then
    \begin{align*}
    \Lambda_\T(\sigma^{\hat\phi_\T}_t(p_R(\xi)s_\T(\xi)))&=\Delta_\T^{it}\Lambda_\T(p_R(\xi)s_\T(\xi))\\
    &=\Delta_\T^{it}(p_R(\xi)\xi)\\
    &=\sigma^{\hat\phi_\T}_t(p_R(\xi))\Delta_{\T}^{it}\xi\\
    &=\sigma^{\hat\phi_\T}_t(p_R(\xi))\U_t\xi\\
    &=\Lambda_\T(\sigma^{\hat\phi_\T}_t(p_R(\xi))s_\T(\U_t\xi)),
    \end{align*}
    which implies $\sigma^{\hat\phi_\T}_t(p_R(\xi)s_\T(\xi))=\sigma^{\hat\phi_\T}_t(p_R(\xi))s_\T(\U_t\xi)$.
   
    We prove by induction over $n$ that $\Delta_\T^{it}\xi=\F_\T(\U_t)\xi$ for all $n\in\mathbb N$ and $\xi\in\H_{\T,n}$. The base case $n=1$ was settled in the previous paragraph. Assume that $\Delta_\T^{it}\xi=\F_\T(\U_t)\xi$ for all $\xi\in \H_{\T,k}$, $k\leq n$. If $\xi_1,\dots,\xi_{n+1}\in \H_0$ and $\J\U_{-i/2}\xi_1=\xi_1$, then
    \begin{align*}
        \Delta_\T^{it}(\xi_1\otimes_\phi\dots\otimes_\phi\xi_{n+1})&=\Delta_\T^{it}(s_\T(\xi_1)\xi_2\otimes_\phi\dots\otimes_\phi\xi_{n+1}-a_{\T,n}(\xi_1)\xi_2\otimes_\phi\dots\otimes_\phi\xi_{n+1}).
    \end{align*}
    Since $a_{\T,n}(\xi_1)\xi_2\otimes_\phi\dots\otimes_\phi\xi_{n+1}\in \H_{\T,n-1}$, we have by induction hypothesis
    \begin{equation*}
        \Delta_\T^{it}a_{\T,n}(\xi_1)\xi_2\otimes_\phi\dots\otimes_\phi\xi_{n+1}=\F_\T(\U_t)a_{\T,n}(\xi_1)\xi_2\otimes_\phi\dots\otimes_\phi\xi_{n+1}.
    \end{equation*}
    Moreover, if we use the induction hypothesis on the first summand, we obtain
    \begin{align*}
        \Delta_\T^{it}s_\T(\xi_1)\xi_2\otimes_\phi\dots\otimes_\phi\xi_{n+1}
        &=\lim_{R\to\infty}\Delta_\T^{it}(p_R(\xi_1)s_\T(\xi_1)\xi_2\otimes_\phi\dots\otimes_\phi\xi_{n+1})\\
        &=\lim_{R\to\infty}\sigma^{\hat\phi_\T}_t(p_R(\xi_1)s_\T(\xi_1))\F_\T(\U_t)\xi_2\otimes_\phi\dots\otimes_\phi\xi_{n+1}\\
        &=\lim_{R\to\infty}\sigma^{\hat\phi_\T}_t(p_R(\xi_1))s_\T(\U_t\xi_1)\F_\T(\U_t)\xi_2\otimes_\phi\dots\otimes_\phi\xi_{n+1}\\
        &=s_\T(\U_t\xi_1)\F_\T(\U_t)\xi_2\otimes_\phi\dots\otimes_\phi\xi_{n+1}\\
        &=\F_\T(\U_t)s_\T(\xi_1)\xi_2\otimes_\phi\dots\otimes_\phi\xi_{n+1},
    \end{align*}
    where we used in the last step that $\U_t^{(n+1)}a_{\T,n}^\ast(\xi_1)\U_{-t}^{(n)}=a_{\T,n}^\ast(\U_t\xi_1)$. If we reassemble the terms, we arrive at
    \begin{equation*}
        \Delta_\T^{it}(\xi_1\otimes_\phi\dots\otimes_\phi\xi_{n+1})=\F_\T(\U_t)(\xi_1\otimes_\phi\dots\otimes_\phi\xi_{n+1}).
    \end{equation*}
    As the set $\{\xi_1\otimes_\phi\dots\otimes_\phi\xi_{n+1}\mid \xi_1,\dots,\xi_{n+1}\in\H_0,\,\J\U_{-i/2}\xi_1=\xi_1\}$ is total in $\H_{\T,n}$, we conclude $\Delta_\T^{it}\xi=\F_\T(\U_t)\xi$ for all $\xi\in \H_{\T,n+1}$.

    (b) It was shown in (a) that $\H_0\subset \dom(S_\T)=\dom(\Delta_\T^{1/2})$ and $\S_\T\xi=\J\U_{-i/2}\xi$, $\Delta_\T^{1/2}\xi=\U_{-i/2}\xi$. Thus $J_\T\xi=\J\xi$ for $\xi\in \H_0$. The same identity for arbitrary $\xi\in \H$ follows by continuity of $\J$ and density of $\H_0$ in $\H$.

    If $\xi\in \H_0$ and $y\in \n_\phi$, then
    \begin{equation*}
        J_\T s_\T(\xi)J_\T\Lambda_\phi(y)=J_\T s_\T(\xi)\Lambda_\phi^\prime(y^\ast)=\J(\xi y^\ast)=y\J\xi=d_\T(\J\xi)\Lambda_\phi(y),
    \end{equation*}
    where we used that $J_\T$ coincides with $\J$ on $\H$. Let $x\in \Gamma_\T(\H,\J,(\U_t))$. Note that $J_\T s_\T(\xi) J_\T$ is affiliated with the commutant of $\Gamma_\T(\H,\J,(\U_t))$, and since $\T$ is local, the same is true for $d_\T(\J\xi)$. Thus
    \begin{align*}
        J_\T s_\T(\xi)J_\T\Lambda_\T(xy)&=J_\T s_\T(\xi) J_\T x\Lambda_\phi(y)\\
        &=x J_\T s_\T(\xi) J_\T \Lambda_\phi(y)\\
        &=xd_\T(\J\xi)\Lambda_\phi(y)\\
        &=d_\T(\J\xi)x\Lambda_\phi(y)\\
        &=d_\T(\J\xi)\Lambda_\T(xy).
    \end{align*}
    As $\Lambda_\T(\n_{\hat\phi_\T})$ is dense in $\F_\T(\H)$, we deduce $J_\T S_\T(\xi)J_\T=d_\T(\J\xi)$.

    Now we proceed again by induction. Assume that $J_\T\xi=\F_\T(\J)\xi$ for all $\xi\in \H_{\T,k}$, $k\leq n$, and let $\xi_1,\dots,\xi_{n+1}\in \H_0$ with $\J\U_{-i/2}\xi_1=\xi_1$. As above, we use that
    \begin{align*}
        \xi_1\otimes_\phi\dots\otimes_\phi\xi_{n+1}&=s_\T(\xi_1)\xi_2\otimes_\phi\dots\otimes_\phi\xi_{n+1}-a_{\T,n}(\xi_1)\xi_2\otimes_\phi\dots\otimes_\phi\xi_{n+1}.
    \end{align*}
    The second summand is in $\H_{\T,n-1}$, hence 
    \begin{equation*}
        J_\T(a_{\T,n}(\xi_1)\xi_2\otimes_\phi\dots\otimes_\phi\xi_{n+1})=\F_\T(\J)a_{\T,n}(\xi_1)\xi_2\otimes_\phi\dots\otimes_\phi\xi_{n+1}
    \end{equation*}
    by induction hypothesis. Moreover,
    \begin{align*}
        J_\T s_\T(\xi_1)\xi_2\otimes_\phi\dots\otimes_\phi\xi_{n+1}&=J_\T s_\T(\xi_1) \J_\T \J_\T\xi_2\otimes_\phi\dots\otimes_\phi\xi_{n+1}\\
        &=d_\T(\J\xi_1)\F_\T(\J)\xi_2\otimes_\phi\dots\otimes_\phi\xi_{n+1}.
    \end{align*}
    By \Cref{lem:intertwining_left_right_creation} we have $d_\T(\J\xi_1)\F_\T(\J)=\F_\T(\J)s_\T(\xi)$. Hence
    \begin{equation*}
        J_\T s_\T(\xi_1)\xi_2\otimes_\phi\dots\otimes_\phi\xi_{n+1}=\F_\T(\J)s_\T(\xi_1)\xi_2\otimes_\phi\dots\otimes_\phi\xi_{n+1}.
    \end{equation*}
    We have thus shown
    \begin{equation*}
        J_\T\xi_1\otimes_\phi\dots\otimes_\phi\xi_{n+1}=\F_\T(\J)\xi_1\otimes_\phi\dots\otimes_\phi\xi_{n+1}.
    \end{equation*}
    As in (a), a density argument concludes the induction step.
\end{proof}

\section{Examples of Operator-Valued Twisted Araki--Woods Algebras}

In this section we present some examples for which our operator-valued twisted Araki--Woods algebras can be identified with known constructions of von Neumann algebras. In particular, we discuss the connection to the von Neumann algebras generated by operator-valued semicircular variables discussed by Shlyakhtenko (Subsection \ref{sec:Shlyakhtenko}), give a full description of operator-valued twisted Araki--Woods algebras over type I factors as tensor products of the base algebra with scalar-valued twisted Araki--Woods factors (Subsection \ref{sec:Araki-Woods_type_I}) and show that semidirect products of $q$-Gaussian algebras (or more generally twisted Araki--Woods algebras) by Gaussian actions naturally occur as operator-valued twisted Araki--Woods algebras (Subsection \ref{sec:crossed_prod}).

All these identifications hold not only on the level of von Neumann algebras, but on the level of operator-valued $W^\ast$-probability spaces. An operator-valued $W^\ast$-probability space is a pair $(M\subset N,E)$ consisting of a unital inclusion of von Neumann algebras $M\subset N$ and a faithful normal conditional expectation $E\colon N\to M$. If $(M_i\subset N_i,E_i)$, $i\in \{1,2\}$, are operator-valued $W^\ast$-probability spaces, we say that $(M_1\subset N_1,E_1)$ and $(M_2\subset N_2,E_2)$ are isomorphic or write $(M_1\subset N_1,E_1)\cong (M_2\subset N_2,E_2)$ if there exists a $\ast$-isomorphism $\alpha\colon N_1\to N_2$ such that $\alpha\circ E_1=E_2\circ \alpha$. In particular, this implies $\alpha(M_1)=M_2$.

\subsection{Connection to Shlyakhtenko's von Neumann algebras Generated by Operator-Valued Semicircular Variables}\label{sec:Shlyakhtenko}

Let us briefly review Shlyakhtenko's construction from \cite{Shl99}. Let $M$ be a von Neumann algebra and $I$ a finite or countably infinite index set. A family $(\eta_{ij})_{i,j\in I}$ of normal linear maps on $M$ is called a \emph{covariance matrix} if the map
\begin{equation*}
    \eta\colon M\to M\overline\otimes B(\ell^2(I)),\,x\mapsto \sum_{i,j\in I}\eta_{i,j}(x)\otimes E_{ij}
\end{equation*}
is normal and completely positive. Here $E_{ij}$ denotes the canonical matrix units in $B(\ell^2(I))$ defined by $E_{ij}\delta_k=\delta_{jk}\delta_i$.

If $\eta$ is a covariance matrix, there exists a unique pair $(F,(\xi_i)_{i\in I})$ consisting of a von Neumann bimodule $F$ over $M$ and a family $(\xi_i)_{i\in I}$ in $F$ generating $F$ as von Neumann bimodule such that
\begin{equation*}
    \langle \xi_i\vert x\xi_j\rangle=\eta_{ij}(x)
\end{equation*}
for all $x\in M$, $i,j\in I$.


If $\phi$ is a faithful normal state on $M$, let $F_\phi$ be the completion of $F$ with respect to the inner product
\begin{equation*}
    \langle\zeta_1,\zeta_2\rangle_{F_\phi}=\phi(\langle\zeta_1\vert\zeta_2\rangle).
\end{equation*}
Note that $F_\phi$ is canonically isomorphic to $F\overline\odot_M L^2(M,\phi)$ via the map $\xi\mapsto \xi\otimes\Omega_\phi$, where $\Omega_\phi\in L^2_+(M,\phi)$ is the cyclic and separating vector associated with $\phi$. Through this isomorphism, $F_\phi$ becomes a correspondence from $M$ to itself. Note that the right module structure on $F_\phi$ is not the one inherited from $F$. In the following, expressions of the form $\xi x$ with $\xi\in F_\phi$ and $x\in M$ always refer to the right module structure of $F_\phi$ as correspondence, so there should be no confusion. 

Shlyakhtenko's operator-valued semicircular operators with variance $\eta$ are the operators $X_i^\phi=a^\ast_0(\xi_i)+a_0(\xi_i)$ on $\F_0(F_\phi)$. Let $\Phi(M,\eta)$ denote the von Neumann algebra generated by $M$ and $\{X^\phi_i\mid i\in I\}$.

There is a normal conditional expectation $E$ from $\Phi(M,\eta)$ onto $M$ given by $E(x)=\iota^\ast x \iota$, where $\iota\colon L^2(M)\to \F_0(\H)$ is the natural inclusion map. Without further assumptions, $E$ may fail to be faithful. A necessary and sufficient condition for the faithfulness of $E$ was given in \cite[Proposition 5.2]{Shl99}.

\begin{proposition}
If $E$ is faithful, then there exists an anti-unitary involution $\J\colon F_\phi\to F_\phi$ and a strongly continuous unitary group $(\U_t)$ on $F_\phi$ such that $(F_\phi,\J,(\U_t))$ is a Tomita correspondence over $(M,\phi)$ and $\Phi(M,\eta)=\Gamma_0(F_\phi,\J,(\U_t))$.\end{proposition}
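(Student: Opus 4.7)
The plan is to obtain $\J$ and $(\U_t)$ as restrictions of the modular data of $\hat\phi:=\phi\circ E$, which is a faithful normal state on $\Phi(M,\eta)$ under the hypothesis that $E$ is faithful. The inclusion $M\subset\Phi(M,\eta)$ carries the faithful normal conditional expectation $E$, so by the example following \Cref{Def:Tomita correspondence}, the triple $(L^2(\Phi(M,\eta),\hat\phi),J_{\hat\phi},(\Delta_{\hat\phi}^{it}))$ is a Tomita correspondence over $(M,\phi)$. I would identify this correspondence with $\F_0(F_\phi)$ via a canonical bimodule unitary, show that the one-particle summand $F_\phi$ is preserved by the transported modular data, and take $\J$ and $(\U_t)$ to be the restrictions.

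Following the arguments of \Cref{prop:standard_form_Araki-Woods} (which for $\T=0$ only require faithfulness of the conditional expectation), the map $\Lambda_0\colon \n_{\hat\phi}\to\F_0(F_\phi)$, $x\mapsto L_\phi^{-1}(x\iota)$, is a well-defined bimodule isometry; it has dense range because iterated applications of the semicircular generators $X_i^\phi=a^*_0(\xi_i)+a_0(\xi_i)$ together with left multiplication by $M$ produce a dense subspace of $\F_0(F_\phi)$. Hence $\Lambda_0$ extends to a bimodule unitary $V\colon L^2(\Phi(M,\eta),\hat\phi)\to\F_0(F_\phi)$. Transport $J_{\hat\phi}$ and $\Delta_{\hat\phi}^{it}$ through $V$ to obtain operators $\tilde J$ and $(\tilde{\mathcal U}_t)$ on $\F_0(F_\phi)$.

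The crucial step is to show that $F_\phi\subset\F_0(F_\phi)$ is invariant under both $\tilde J$ and $(\tilde{\mathcal U}_t)$. Since $a^*_0(\xi_i)$ raises the Fock degree by $1$ while $a_0(\xi_i)$ lowers it by $1$, the KMS condition for $\hat\phi$ with respect to $\sigma_t^{\hat\phi}$, combined with an induction on word length in the generators $X_i^\phi$, forces $\sigma_t^{\hat\phi}$ to preserve the Fock grading on $\F_0(F_\phi)$; in particular $(\tilde{\mathcal U}_t)$ preserves $F_\phi$. Invariance of $F_\phi$ under $\tilde J$ is a consequence of self-adjointness of the $X_i^\phi$ together with the modular relation $J_{\hat\phi}\Delta_{\hat\phi}^{1/2}\Lambda_{\hat\phi}(x)=\Lambda_{\hat\phi}(x^*)$. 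This invariance step is the main obstacle; it requires careful bookkeeping of the Wick calculus against the modular theory, but is the operator-valued analog of the key computation underlying Shlyakhtenko's scalar-valued free Araki--Woods factors.

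With $\J:=\tilde J|_{F_\phi}$ and $\U_t:=\tilde{\mathcal U}_t|_{F_\phi}$, the Tomita correspondence axioms transfer directly from those of the ambient triple, since the left and right actions of $M$ on $F_\phi$ agree with those inherited from the standard form via $V$. A direct computation gives $\Lambda_0(X_i^\phi)=\xi_i$, and self-adjointness of $X_i^\phi$ then yields $\J\U_{-i/2}\xi_i=\xi_i$, so $X_i^\phi=s_0(\xi_i)\in\Gamma_0(F_\phi,\J,(\U_t))$; thus $\Phi(M,\eta)\subset\Gamma_0(F_\phi,\J,(\U_t))$. The reverse inclusion holds because the family $\{\xi_i\}_{i\in I}$ generates $F_\phi$ as a von Neumann bimodule by the defining property of $F$, so every field operator $s_0(\xi)$ with $\xi\in F_\phi$ left-bounded and $\J\U_{-i/2}\xi=\xi$ lies in the von Neumann algebra generated by $M$ and $\{X_i^\phi\}$.
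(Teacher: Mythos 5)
Your overall strategy --- realizing $\J$ and $(\U_t)$ as restrictions to the one-particle space of the modular data of $\hat\phi=\phi\circ E$ --- is the same as the paper's, but two steps you treat as routine are in fact the substance of the proof. First, the invariance of $F_\phi$ under $J_{\hat\phi}$ and $\Delta_{\hat\phi}^{it}$ cannot be extracted from ``the KMS condition plus induction on word length'': the KMS condition by itself does not force the modular group to preserve the Fock grading, and the actual argument (Shlyakhtenko's \cite[Lemma 5.1]{Shl99}, which the paper cites for exactly this point) is a concrete computation of the Tomita operator on vectors $W\Omega$ for words $W$ in $M$ and the $X_i^\phi$. You correctly identify this as ``the main obstacle,'' but you do not resolve it, so as written the proof has a hole precisely where the work is.

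Second, your argument for the inclusion $\Gamma_0(F_\phi,\J,(\U_t))\subset\Phi(M,\eta)$ does not go through as stated. That $\{\xi_i\}_{i\in I}$ generates $F_\phi$ as a von Neumann bimodule gives density of $\operatorname{lin}\{x\xi_i y\}$ in $F_\phi$, but the generators of $\Gamma_0(F_\phi,\J,(\U_t))$ are the field operators $s_0(\xi)$ for $\xi$ in the \emph{real} subspace of left-bounded vectors with $\J\U_{-i/2}\xi=\xi$, and a generic $x\xi_i y$ does not lie in that subspace. One needs the symmetrized vectors $x^\ast\xi_i\sigma^\phi_{i/2}(y)+y^\ast\xi_i\sigma^\phi_{i/2}(x)$ (which do satisfy $\J\U_{-i/2}\xi=\xi$), the identification of $s_0$ of such a vector with $x^\ast X_i^\phi y+y^\ast X_i^\phi x\in\Phi(M,\eta)$, and finally a density argument showing that these suffice to recover the unit ball of $\Gamma_0(F_\phi,\J,(\U_t))$ in the strong topology --- this is exactly how the paper's proof (following Shlyakhtenko) proceeds. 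Your one-sentence appeal to bimodule generation skips all of this.
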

\begin{proof}
    Let $J$, $\Delta$ denote the modular conjugation and modular operator associated with the faithful normal state $\phi\circ E$ on $\Phi(M,\eta)$. By \cite[Lemma 5.1]{Shl99}, both $J$ and $\Delta^{it}$ leave $F_\phi$ invariant and they restrict to $J_\phi$ and $\Delta_\phi^{it}$ on $L^2(M)$. Let $\J=J|_{F_\phi}$, $\U_t=\Delta^{it}\vert_{F_\phi}$. It is easy to see that $(F_\phi,\J,(\U_t))$ is a Tomita correspondence.

    Let $\Omega\in L^2_+(M)\subset \F_0(F_\phi)$ denote the cyclic vector such that $\phi=\langle\Omega,\cdot\,\Omega\rangle$. Note that 
    \begin{equation*}
        \J\U_{-i/2}\xi_i=J\Delta^{1/2}X^\phi_i\Omega=X^\phi_i\Omega=\xi_i.
    \end{equation*}
    Therefore $\Phi(M,\eta)\subset \Gamma_0(F_\phi,\J,(\U_t))$.

    For the converse inclusion we follow the argument from the proof of \cite[Lemma 5.1]{Shl99}. Let $K$ be the real-linear span of all elements of the form $x^\ast\xi_i \sigma^{\phi}_{i/2}(y)+y^\ast\xi_i \sigma^\phi_{i/2}(x)$ with $x,y\in \dom(\sigma^\phi_{i/2})$ and $i\in I$ and let $K'$ be the set of all left bounded vectors in $\dom(\U_{-i/2})$ with $\J\U_{-i/2}\xi=\xi$.

    By definition, $\xi_i\in K^\prime$. Since
    \begin{align*}
        \J\U_{-i/2}(x^\ast\xi_i \sigma^{\phi}_{i/2}(y)+y^\ast\xi_i \sigma^\phi_{i/2}(x))
        &=\J(\sigma^\phi_{i/2}(x)^\ast (\U_{-i/2}\xi_i)y+\sigma^\phi_{i/2}(y)^\ast(\U_{-i/2}\xi_i)x)\\
        &=y^\ast \xi_i\sigma^\phi_{i/2}(x)+x^\ast \xi_i \sigma^\phi_{i/2}(y),
    \end{align*}
    we have $K\subset K^\prime$.

  Note that
   \begin{equation*}
       (x^\ast X_i^\phi y+y^\ast X_i^\phi x)\Omega=x^\ast X_i^\phi\Omega \sigma^\phi_{i/2}(y)+y^\ast X_i^\phi \Omega \sigma^\phi_{i/2}(x)=x^\ast \xi_i \sigma^\phi_{i/2}(y)+y^\ast \xi_i\sigma^\phi_{i/2}(x)
   \end{equation*}
    and $s_0(\xi)\Omega=\xi$ for $\xi\in K^\prime$.  Since $\langle\Omega,\cdot\,\Omega\rangle$ is a faithful normal state on $\Gamma_0(F_\phi,\J,(\U_t))$ by \Cref{prop:standard_form_Araki-Woods}, we deduce $x^\ast X_i^\phi y+y^\ast X_i^\phi x=s_0(x^\ast \xi_i\sigma^\phi_{i/2}+y^\ast \xi_i\sigma^\phi_{i/2}(x))$.
    
    As $\{\zeta\in K:\norm{L_{\phi\circ E}(\zeta)}\leq 1\}$ spans $F_\phi$ as a complex Hilbert space, it follows that $\{\zeta\in K:\norm{L_{\phi\circ E}(\zeta)}\leq 1\}$ is dense in $\{\zeta\in K^\prime:\norm{s(\zeta)}\leq 1\}$. Hence the unit ball of $\Phi(M,\zeta)$ is strongly dense in $\Gamma_0(F_\phi,\J,(\U_t))$. As they are both von Neumann algebras, we conclude $\Phi(M,\eta)=\Gamma_0(F_\phi,\J,(\U_t))$.
\end{proof}

\begin{proposition}
If $M$ is a separable von Neumann algebra, $\phi$ a faithful normal state on $M$ and $(\H,\J,(\U_t))$ is a separable Tomita correspondence over $(M,\phi)$, then there exists a countable set $I$ and a covariance matrix $\eta\colon M\to M\overline\otimes B(\ell^2(I))$ such that $(M\subset\Gamma_0(\H,\J,(\U_t)),E_0)\cong (M\subset\Phi(M,\eta),E)$.
\end{proposition}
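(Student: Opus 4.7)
The plan is to construct a countable total family $(\xi_i)_{i\in I}\subset\H_\IR$ of left-bounded vectors in $\H$ whose $L_\phi$-norms are square-summable, define $\eta_{ij}(x)=L_\phi(\xi_i)^\ast\pi_l^\H(x)L_\phi(\xi_j)$, and use the universal property of Shlyakhtenko's associated bimodule to identify it with $F:=\L(L^2(M)_M,\H_M)$; functoriality of the free Fock construction will then transport the identification to the $W^\ast$-probability space level. To produce $(\xi_i)$, I first note that $\H_0$ is invariant under $\J$ and $\U_z$ for every $z\in\IC$: the relation $\J\U_z=\U_{\bar z}\J$, together with the fact that $\J$ preserves the class of simultaneously left- and right-bounded vectors (\cite[Lemma 4.7]{Wir22b}), gives $\U_w\J\xi=\J\U_{\bar w}\xi$ bounded for every $\xi\in\H_0$ and $w\in\IC$. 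For $\xi\in\H_0$ the decomposition $\xi=\tfrac12(\xi+\J\U_{-i/2}\xi)+i\cdot\tfrac1{2i}(\xi-\J\U_{-i/2}\xi)$ writes $\xi$ as a sum of two elements of $\H_0\cap\H_\IR$, so by \Cref{prop:bdd_analytic_vectors_dense} the set $\H_0\cap\H_\IR$ is total in $\H$. Separability of $\H$ permits the choice of a countable total subfamily in $\H_0\cap\H_\IR$, rescaled so that $\sum_i\norm{L_\phi(\xi_i)}^2<\infty$.

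Then $V\colon L^2(M)\otimes\ell^2(I)\to\H$, $V(\zeta\otimes\delta_i)=L_\phi(\xi_i)\zeta$, is a bounded right $M$-module map by Cauchy--Schwarz, and $\eta(x):=V^\ast\pi_l^\H(x)V$ is a normal completely positive map from $M$ into $M\overline\otimes B(\ell^2(I))$ (right-modularity of $V$ places the image in this subalgebra), with the prescribed matrix entries; thus $\eta$ is a covariance matrix. Let $(F(\eta),(\tilde\xi_i))$ be Shlyakhtenko's bimodule with generators characterized by $\langle\tilde\xi_i\vert x\tilde\xi_j\rangle=\eta_{ij}(x)$. Since the corresponding identity holds for $L_\phi(\xi_i)$ in $F$ with the $M$-valued inner product $\langle S\vert T\rangle_M=S^\ast T$, the universal property furnishes a bimodule morphism $\Psi\colon F(\eta)\to F$ with $\Psi(\tilde\xi_i)=L_\phi(\xi_i)$, isometric on the sub-bimodule generated by $(\tilde\xi_i)$. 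Totality of $(\xi_i)$ in $\H$ forces the strongly closed $M$-bimodule generated by $\{L_\phi(\xi_i)\}$ to be all of $F$, so $\Psi$ is a bimodule isomorphism. Tensoring over $M$ with $L^2(M,\phi)$ yields a correspondence isomorphism $\alpha\colon F(\eta)_\phi\xrightarrow{\sim}\H$ with $\alpha(\tilde\xi_i\otimes\Omega_\phi)=\xi_i$ that restricts to the identity on $L^2(M)$.

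Functoriality lifts $\alpha$ to a unitary bimodule isomorphism $\F_0(\alpha)\colon\F_0(F(\eta)_\phi)\to\F_0(\H)$ which is the identity on the vacuum summand and intertwines $a_0^\ast(\tilde\xi_i)$ with $a_0^\ast(\xi_i)$, hence $X_i^\phi$ with $s_0(\xi_i)$. Consequently $\F_0(\alpha)\Phi(M,\eta)\F_0(\alpha)^\ast$ is the von Neumann algebra $N\subseteq B(\F_0(\H))$ generated by $M$ and $\{s_0(\xi_i):i\in I\}$, and $\F_0(\alpha)$ intertwines the two conditional expectations since it fixes the inclusion $\iota$. The inclusion $N\subseteq\Gamma_0(\H,\J,(\U_t))$ is immediate from $\xi_i\in\H_\IR$. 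For the reverse inclusion I would rerun the density argument of the preceding proposition with the real-linear span $K$ of $\{x^\ast\xi_i\sigma^\phi_{i/2}(y)+y^\ast\xi_i\sigma^\phi_{i/2}(x):x,y\in\dom(\sigma^\phi_{i/2}),\,i\in I\}$: the identity $x^\ast s_0(\xi_i)y+y^\ast s_0(\xi_i)x=s_0\bigl(x^\ast\xi_i\sigma^\phi_{i/2}(y)+y^\ast\xi_i\sigma^\phi_{i/2}(x)\bigr)\in N$, combined with the totality of $(\xi_i)$, forces the unit ball of $N$ to be strongly dense in $\Gamma_0(\H,\J,(\U_t))$, giving equality. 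The main obstacle is achieving all four constraints on $(\xi_i)$ simultaneously (membership in $\H_\IR\cap\dom(\U_{-i/2})$, left-boundedness, square-summability of $L_\phi$-norms, and totality in $\H$) and faithfully transcribing the density argument of the previous proposition in the reverse direction.
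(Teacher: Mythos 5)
Your proposal follows essentially the same route as the paper: choose a countable family in the real subspace $\H_\IR$ of left-bounded vectors fixed by $\J\U_{-i/2}$, define $\eta_{ij}(x)=L_\phi(\xi_i)^\ast L_\phi(x\xi_j)$, identify Shlyakhtenko's bimodule $F_\phi$ with $\H$ via the uniqueness of the pair $(F,(\xi_i))$, lift to the Fock bimodules, and close with the same density argument as in the preceding proposition. The only (harmless, arguably cleaner) deviations are your square-summable normalization of $\norm{L_\phi(\xi_i)}$, which makes the operator $V$ bounded and hence $\eta$ manifestly a normal completely positive map, versus the paper's choice of a countable dense subset of the $L_\phi$-unit ball of $\H_\IR$.
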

\begin{proof}
     Since $\H$ is separable, we can choose a countable dense subset $\{\zeta_i\mid i\in I\}$ of $\{\zeta\in \dom(\U_{-i/2})\mid \J\U_{-i/2}\zeta=\zeta,\norm{L_\phi(\zeta)}\leq 1\}$. Let
     \begin{equation*}
         \eta_{ij}\colon M\to M,\,\eta_{ij}(x)=L_\phi(\zeta_i)^\ast L_\phi(x\zeta_j)
     \end{equation*}
Clearly, $\eta$ is a covariance matrix. Form the von Neumann bimodule $F$ and the family $(\xi_i)_{i\in I}$ as in the definition of $\Phi(M,\eta)$. In particular, $\langle\xi_i\vert x\xi_j\rangle=L_\phi(\zeta_i)^\ast L_\phi(x\zeta_j)$. Since
         \begin{align*}
         \langle \xi_i,x\xi_j y\rangle_{F_\phi}&=\langle\xi_i\otimes\Omega_\phi,x\xi_j\otimes \Omega_\phi y\rangle_{F\overline{\otimes}_M L^2(M)}\\
         &=\langle \Omega_\phi,\langle \xi_i\vert x\xi_j\rangle \Omega_\phi y\rangle_{L^2(M)}\\
        &=\phi(\langle \xi_i\vert x\xi_j\rangle \sigma^\phi_{-i/2}(y))\\
       &=\phi(L_\phi(\zeta_i)^\ast L_\phi(x\zeta_j)\sigma^\phi_{-i/2}(y))\\
       &=\phi(L_\phi(\zeta_i)^\ast L_\phi(x\zeta_j y))\\
         &=\langle \zeta_i,x\zeta_j y\rangle_\H
     \end{align*}
     for $x\in M$, $y\in\dom(\sigma^\phi_{-i/2})$ and $i,j\in I$, there exists a unique unitary bimodule map $U\colon F_\phi\to \H$ such that $U\xi_i=\zeta_i$.  Then it is not hard to see that $\F(U)X^\phi_i\F(U)^\ast=s_0(\zeta_i)$.

     As $\F(U)$ is a unitary bimodule map that leaves $L^2(M)$ invariant, we conclude that $\F(U)\,\cdot\,\F(U)^\ast$ is a $\ast$-isomorphism between $\Phi(M,\eta)$ and $\Gamma_0(\H,\J,(\U_t))$ that leaves the copies of $M$ invariant and intertwines the canonical conditional expectations.
\end{proof}

\subsection{Operator-Valued Twisted Araki--Woods Algebras over Type I Factors}\label{sec:Araki-Woods_type_I}

In this section we give a complete description of operator-valued twisted Araki--Woods algebras in the case when the base algebra is a type I factor (and the Tomita correspondence is separable). More precisely, we show that they decompose as a tensor product of the base algebra and a (scalar-valued) twisted Araki--Woods factor. In particular, in the light of the previous subsection, this yields a complete description of the von Neumann algebras generated by operator-valued semicircular variables over type I factors for which the canonical conditional expectation is faithful.

\begin{theorem}\label{thm:twisted_Araki-Woods_type_I}
If $M$ is a type I factor, $\phi$ a normal semifinite faithful weight on $M$, $(H,\J,(\U_t))$ a separable Tomita correspondence over $(M,\phi)$ and $\T$ a local braided compatible twist on $\H\otimes_\phi \H$, then there exists a Hilbert space $H$, an anti-unitary involution $J$ on $H$, a strongly continuous unitary group $(U_t)$ on $H$ satisfying $[J,U_t]=0$ for all $t\in \IR$, and a local braided compatible twist $T$ on $H\otimes H$ such that
\begin{equation*}
    (M\subset\Gamma_\T(\H,\J,(\U_t)),E_0)\cong (M\subset M\overline\otimes \Gamma_T(H,J,(U_t)),\mathrm{id}\otimes \psi),
\end{equation*}
where $\psi$ is the vacuum state on the scalar-valued twisted Araki--Woods algebra $\Gamma_T(H,J,(U_t))$.
\end{theorem}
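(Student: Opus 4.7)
The plan is to reduce to the scalar-valued case via the structural decomposition of Tomita correspondences over type I factors from \Cref{thm:Tomita_corr_type_I}. First, that theorem produces a Hilbert space $H$, an anti-unitary involution $J$ on $H$, a strongly continuous unitary group $(U_t)$ on $H$ with $[J,U_t]=0$, and a unitary bimodule map $V\colon\H\to L^2(M)\otimes H$ (with $M$ acting on the first tensor factor from both sides) intertwining $\J$ with $J_\phi\otimes J$ and $\U_t$ with $\Delta_\phi^{it}\otimes U_t$. By iterating $V$ and invoking the canonical identification
\begin{equation*}
    (L^2(M)\otimes H_1)\otimes_\phi(L^2(M)\otimes H_2)\cong L^2(M)\otimes(H_1\otimes H_2),
\end{equation*}
sending $\Lambda_\phi(x)\otimes e\otimes_\phi\Lambda_\phi(y)\otimes f$ to $\Lambda_\phi(xy)\otimes e\otimes f$, one obtains unitary bimodule maps $V_n\colon \H^{\otimes_\phi n}\to L^2(M)\otimes H^{\otimes n}$ that conjugate $\J^{(n)}$ and $\U_t^{(n)}$ into their scalar-valued analogues.

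Since $L^2(M)$ is the standard form of $M$, the commutant of the $M$-$M^{\mathrm{op}}$-bimodule structure on $L^2(M)\otimes(H\otimes H)$ equals $\IC 1_{L^2(M)}\otimes B(H\otimes H)$. Because $\T$ is a bimodule map, there exists a unique bounded operator $T$ on $H\otimes H$ satisfying $V_2\T V_2^\ast=1_{L^2(M)}\otimes T$. Consequently $P_{\T,n}=V_n^\ast(1\otimes P_{T,n})V_n$, which forces $T$ to be a contractive braided twist; compatibility of $T$ with $J^{(2)}$ and $U_t^{\otimes 2}$ follows from compatibility of $\T$ with $\J^{(2)}$ and $\U_t^{(2)}$. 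Locality of $T$ is deduced from locality of $\T$ by taking a nonzero self-adjoint $x\in\n_\phi$ and pairing it with basis vectors of the real subspace $H_\IR=\ker(JU_{-i/2}-1)\subset H$ (and similarly on the right), as in \Cref{Ex:twist_trivial_bimod}.

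The twisted Fock bimodules decompose as $\F_\T(\H)\cong L^2(M)\otimes\F_T(H)$ with the left action of $M$ on the first tensor factor and $\iota_\T$ corresponding to $1_{L^2(M)}\otimes\iota_T$. Every left-bounded $\xi\in\H$ with $\J\U_{-i/2}\xi=\xi$ takes the form $\xi=\sum_i\Lambda_\phi(x_i)\otimes e_i$ with self-adjoint $x_i\in\n_\phi$ and $e_i$ running through an orthonormal basis of $H_\IR$, and a direct computation extending the scalar-valued computation of \Cref{Ex:twist_trivial_bimod} shows that $s_\T(\xi)$ agrees with $\sum_i x_i\otimes s_T(e_i)$ on the analytic core supplied by the proof of \Cref{lem:field_op_ess_self-adj}. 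Together with the left action $x\mapsto x\otimes 1$ of $M$, these operators generate $M\overline\otimes\Gamma_T(H,J,(U_t))$ inside $B(L^2(M)\otimes\F_T(H))$, while the natural conditional expectation $x\mapsto\iota_\T^\ast x\iota_\T$ corresponds to $\mathrm{id}_M\otimes\psi$ for the vacuum state $\psi$ on $\Gamma_T(H,J,(U_t))$, establishing the desired isomorphism of operator-valued $W^\ast$-probability spaces.

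The hard step will be the identification of field operators: since $\xi$ may involve an infinite sum whose only convergence is the strong convergence of $\sum_i L_\phi(\xi_i)^\ast L_\phi(\xi_i)$, and the $x_i$ need not be conveniently bounded when $\phi$ is infinite, one has to run the Nelson-type analytic vector argument of \Cref{lem:field_op_ess_self-adj} uniformly in both pictures to produce a common essentially self-adjoint core. Once $s_\T(\xi)$ and $\sum_i x_i\otimes s_T(e_i)$ are shown to coincide on that core, equality of their self-adjoint closures and hence of the generated von Neumann algebras follows.
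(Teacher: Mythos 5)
Your proposal follows the paper's proof essentially verbatim: apply \Cref{thm:Tomita_corr_type_I}, use factoriality of $M$ to split off a scalar twist $T$ with $V_2\T V_2^\ast=1\otimes T$ and $P_{\T,n}=V_n^\ast(1\otimes P_{T,n})V_n$, transfer braidedness, compatibility and locality, and let $\F_\T(V)$ implement the spatial isomorphism intertwining the conditional expectations. The "hard step" you flag (identifying $s_\T(\xi)$ with $\sum_i x_i\otimes s_T(e_i)$ for possibly unbounded infinite sums) is handled in the paper only implicitly via the correspondence of creation operators under $\F_\T(V)$, so your extra care there is a refinement rather than a deviation.
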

\begin{proof}
    By \Cref{thm:Tomita_corr_type_I} there exists a Hilbert space $H$, an anti-unitary involution $J\colon H\to H$, a strongly continuous unitary group $(U_t)$ on $H$ satisfying $[J,U_t]=0$ for all $t\in\IR$ and a unitary bimodule map $V\colon\H\to L^2(M)\otimes H$ such that $V\J=(J_\phi\otimes J)V$ and $V\U_t=(\Delta_\phi^{it}\otimes U_t)V$.

    As discussed in \Cref{ex:twist_trivial_bimod}, there is a canonical identification $(L^2(M)\otimes H)^{\otimes_\phi 2}\cong L^2(M)\otimes H\otimes H$, which we will tacitly make use in the following. The map $(V\otimes_\phi V)\T(V^\ast\otimes_\phi V^\ast)$ is a bounded bimodule map on $L^2(M)\otimes H\otimes H$, that is, it belongs to $(M\otimes \IC 1_{H\otimes H})^\prime\cap (M^\prime\otimes \IC 1_{H\otimes H})^\prime$. Since $M$ is a factor, this implies that there exists a bounded linear map $T$ on $H\otimes H$ such that $(V\otimes_\phi V)\T(V^\ast\otimes_\phi V^\ast)=1_{L^2(M)}\otimes T$.
    
    A direct computation shows that $V^{\otimes_k \phi}\T_k(V^\ast)^{\otimes_\phi k}=1_{L^2(M)}\otimes T_k$ and $V^{\otimes_\phi n}P_{\T,n}(V^\ast)^{\otimes_\phi n}=1_{L^2(M)}\otimes P_{T,n}$. Thus $T$ is a twist, and a strict twist if $\T$ is a strict twist. Moreover, $V^{\otimes_\phi n}$ extends to a unitary bimodule map $V^{(n)}$ between $H_{\T,n}$ and $L^2(M)\otimes H_{T,n}$ for all $n\in \mathbb N$. We write $\F_\T(V)$ for the unitary bimodule map between $\F_\T(\H)$ and $L^2(M)\otimes \F_T(H)$ that restricts to the identity on $L^2(M)$ and to $V^{(n)}$ on $\H_{\T,n}$.
    
    Furthemore,
    \begin{equation*}
        1_{L^2(M)}\otimes T_1 T_2 T_1=\F_\T(V)\T_1 \T_2 \T_1\F_\T(V)^\ast=\F_\T(V)\T_2 \T_1 \T_2\F_\T(V)^\ast=1_{L^2(M)}\otimes T_2 T_1 T_2.
    \end{equation*}
    Hence $T$ is braided. The compatibility of $T$ follows directly from the tensor product decompositions of $\T$, $\J$ and $(\U_t)$.

    To show that $T$ is local, let $\xi,\eta\in H$ and $x\in\n_{\phi}\cap \n_\phi^\ast$ and non-zero. A direct computation shows $a^\ast_{\T,n}(V^\ast(\Lambda_\phi(x)\otimes \xi))=\F_\T(V)(x\otimes a^\ast_{T,n}(\xi))\F_\T(V)^\ast$ and likewise $b^\ast_{\T,n}(V^\ast(\Lambda_\phi^\prime(x)\otimes\eta))=\F_\T(V)(J_\phi x^\ast J_\phi\otimes b^\ast_{T,n}(\eta))\F_\T(V)^\ast$. Hence the locality of $T$ follows from the locality of $\T$, and the spatial isomorphism induced by $\F_\T(V)$ maps $\Gamma_\T(\H,\J,(\U_t))$ onto $M\overline\otimes \Gamma_T(H,J,(U_t))$ and interwines the conditional expectations.
\end{proof}

\begin{corollary}\label{cor:op-valued_semicircular}
If $M$ is a type I factor, $I$ a countable set and $\eta\colon M\to M\overline\otimes B(\ell^2(I))$ a covariance matrix such that the canonical conditional expectation from $\Phi(M,\eta)$ onto $M$ is faithful, then $\Phi(M,\eta)$ is isomorphic to a tensor product of $M$ with a free Araki--Woods algebra.
\end{corollary}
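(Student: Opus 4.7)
The plan is to chain together the two main results already established in this section. First I will apply the first proposition of Subsection~\ref{sec:Shlyakhtenko}: since the canonical expectation $E$ on $\Phi(M,\eta)$ is assumed faithful, the modular data of $\phi\circ E$ restricts to an anti-unitary involution $\J$ and a unitary group $(\U_t)$ on $F_\phi$ making $(F_\phi,\J,(\U_t))$ a Tomita correspondence over $(M,\phi)$, and we obtain the identification
\begin{equation*}
(M\subset \Phi(M,\eta),E)\cong (M\subset \Gamma_0(F_\phi,\J,(\U_t)),E_0).
\end{equation*}
Here $\phi$ is the faithful normal state on $M$ implicit in Shlyakhtenko's construction. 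Because $M$ carries such a state it is $\sigma$-finite, so $L^2(M)$ is separable; combined with the countability of $I$, this shows that $F_\phi$ is separable as a Hilbert space.

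Second, I will invoke Theorem~\ref{thm:twisted_Araki-Woods_type_I} with twist $\T=0$, which is trivially local, braided and compatible. The theorem produces a Hilbert space $H$, an anti-unitary involution $J$ on $H$, a strongly continuous unitary group $(U_t)$ commuting with $J$, and a twist $T$ on $H\otimes H$ such that
\begin{equation*}
(M\subset \Gamma_0(F_\phi,\J,(\U_t)),E_0)\cong (M\subset M\overline\otimes \Gamma_T(H,J,(U_t)),\mathrm{id}\otimes \psi).
\end{equation*}
Inspecting the construction of $T$ in the proof of that theorem, $T$ is characterized by $1_{L^2(M)}\otimes T=(V\otimes_\phi V)\T(V^\ast\otimes_\phi V^\ast)$, so $\T=0$ forces $T=0$. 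Therefore $\Gamma_T(H,J,(U_t))=\Gamma_0(H,J,(U_t))$ is, by definition, a free Araki--Woods algebra in the sense of Shlyakhtenko.

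Combining the two isomorphisms yields $\Phi(M,\eta)\cong M\overline\otimes \Gamma_0(H,J,(U_t))$, which is the desired statement. The only mildly delicate step is verifying that $F_\phi$ meets the separability hypothesis of Theorem~\ref{thm:twisted_Araki-Woods_type_I}; beyond this the argument is essentially a bookkeeping exercise that feeds the output of the Shlyakhtenko-type proposition into the type~I disintegration theorem with zero twist.
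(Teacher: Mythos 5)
Your proposal is correct and is exactly the route the paper intends: the corollary is stated without separate proof precisely because it is the concatenation of the first proposition of Subsection~\ref{sec:Shlyakhtenko} (identifying $\Phi(M,\eta)$ with $\Gamma_0(F_\phi,\J,(\U_t))$) with \Cref{thm:twisted_Araki-Woods_type_I} applied to the twist $\T=0$, which forces $T=0$ and hence yields a free Araki--Woods factor. One small caveat on your separability check: $\sigma$-finiteness alone does not give a separable $L^2(M)$ for general von Neumann algebras, but since $M$ is a type I factor $B(K)$ admitting a faithful normal state, $K$ is separable and the argument goes through.
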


\begin{remark}
    By \cite[Corollary 6.11]{Shl97}, if $(U_t)$ is non-trivial, then $\Gamma_0(H,J,(U_t))$ is a type III factor and in particular properly infinite. Thus $M\overline\otimes \Gamma_0(H,J,(U_t))\cong \Gamma_0(H,J,(U_t))$ for every type I factor $M$. If $(U_t)$ is trivial, then $\Gamma_0(H,J,(U_t))\cong L(\mathbb F_{\dim H})$ and $M\overline\otimes \Gamma_0(H,J,(U_t))$ is an interpolated free group factor if $M$ is finite.
\end{remark}

\subsection{Gaussian Actions on Twisted Araki--Woods Algebras}\label{sec:crossed_prod}

Let $H$ be a Hilbert space, $J$ an anti-unitary involution on $H$, $(U_t)$ a strongly continuous unitary group on $H$ such that $[U_t,J]=0$ for all $t\in\IR$ and $T$ a local compatible braided twist on $H\otimes H$. We consider group actions on the twisted Araki--Woods algebra $\Gamma_T(H,J,(U_t))$ that are induced by unitary actions on $H$ compatible with $J$ and $(U_t)$.

More precisely, let $G$ be a discrete group and $\pi$ a unitary representation of $G$ on $H$ such that $[\pi(g),J]=0$ for all $g\in G$. In other words, $\pi$ is the complexification of an orthogoncal action of $G$ on $H^J$. If $[\pi(g)\otimes \pi(g),T]=0$ for all $g\in G$ and $[\pi(g),U_t]=0$ for all $g\in G$, $t\in \IR$, then $\pi(g)^{\otimes n}$ is a unitary operator on $H^{\otimes n}$ that commutes with $P_{T,n}$. Hence it gives rise to a unitary operator $\F_T(\pi(g))$ on $\F_T(H)$. The Gaussian action $\Gamma_T(\pi)$ of $G$ on the twisted Araki--Woods algebra $\Gamma_T(H,J,(U_t))$ induced by $\pi$ is given by
\begin{equation*}
    \Gamma_T(\pi(g))\colon B(\F_T(H))\to B(\F_T(H)),\,\Gamma_T(g)x=\F_T(\pi(g))x\F_T(\pi(g))^\ast.
\end{equation*}

\begin{lemma}
If $g\in G$ and $x\in \Gamma_T(H,J,(U_t))$, then $\Gamma_T(\pi(g))x\in \Gamma_T(H,J,(U_t))$.
\end{lemma}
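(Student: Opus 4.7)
The plan is to verify that conjugation by the unitary $\F_T(\pi(g))$ carries the generating set of $\Gamma_T(H,J,(U_t))$ into itself. Since $\Gamma_T(\pi(g))$ is a $\ast$-automorphism of $B(\F_T(H))$ and $\Gamma_T(H,J,(U_t))$ is (in the scalar case) the von Neumann algebra generated by the field operators $s_T(\xi)$ with $\xi$ in the real subspace $H_\IR = \{\xi\in \dom(U_{-i/2}) : JU_{-i/2}\xi = \xi\}$, it suffices to show that for each such $\xi$,
\begin{equation*}
\F_T(\pi(g))\, s_T(\xi)\, \F_T(\pi(g))^\ast = s_T(\pi(g)\xi),
\end{equation*}
with $\pi(g)\xi$ again in $H_\IR$.

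First I would check that $\pi(g)$ preserves $H_\IR$. By the spectral theorem, the commutation $[\pi(g), U_t] = 0$ for all $t\in\IR$ extends to $\pi(g)U_z \subset U_z\pi(g)$ for every $z\in\IC$; in particular $\pi(g)\xi \in \dom(U_{-i/2})$ whenever $\xi$ is. Combining this with $[\pi(g),J]=0$ yields
\begin{equation*}
JU_{-i/2}\pi(g)\xi = J\pi(g)U_{-i/2}\xi = \pi(g)JU_{-i/2}\xi = \pi(g)\xi.
\end{equation*}

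Second, I would verify the intertwining for the creation operators on the dense subspace of finite-particle tensors. By definition $\F_T(\pi(g))$ acts as $\pi(g)^{\otimes n}$ on $H_{T,n}$, and by a direct computation on simple tensors,
\begin{equation*}
\F_T(\pi(g))\, a^\ast_{T,n}(\xi)\, \F_T(\pi(g))^\ast (\eta_1\otimes\cdots\otimes\eta_n) = \pi(g)\xi \otimes \eta_1\otimes\cdots\otimes\eta_n = a^\ast_{T,n}(\pi(g)\xi)(\eta_1\otimes\cdots\otimes\eta_n).
\end{equation*}
Taking adjoints gives the analogous identity for the annihilation operators $a_{T,n}(\xi)$, and summing over $n$ shows that $\F_T(\pi(g))$ intertwines $s_T(\xi)$ and $s_T(\pi(g)\xi)$ on the algebraic sum $\bigcup_n (L^2(M)\oplus\bigoplus_{j=1}^n H_{T,j})$.

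Third, by Lemma \ref{lem:field_op_ess_self-adj} this subspace is a core of analytic vectors for both $s_T(\xi)$ and $s_T(\pi(g)\xi)$, and it is invariant under $\F_T(\pi(g))$. Nelson's theorem implies that the two self-adjoint closures agree under conjugation by $\F_T(\pi(g))$, so $\Gamma_T(\pi(g))$ sends every bounded Borel function (in particular every resolvent) of $s_T(\xi)$ to the corresponding function of $s_T(\pi(g)\xi)$, which lies in $\Gamma_T(H,J,(U_t))$.

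The only potential obstacle is the unbounded case of the field operators: one has to pass from an identity on a dense subspace to an identity of (possibly unbounded) self-adjoint operators, but this is exactly what the analytic-vector argument in Lemma \ref{lem:field_op_ess_self-adj} provides. Everything else is bookkeeping using that $\pi(g)$ commutes with both $J$ and $(U_t)$.
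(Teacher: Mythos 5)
Your proof is correct, but it goes a genuinely different way than the paper's. You prove covariance of the \emph{left} field operators directly: $\F_T(\pi(g))\,s_T(\xi)\,\F_T(\pi(g))^\ast=s_T(\pi(g)\xi)$ with $\pi(g)\xi$ again in $H_\IR$, so conjugation carries generators to generators and hence the generated von Neumann algebra into itself. The only delicate point, passing from the identity on finite-particle vectors to an identity of self-adjoint operators, is handled correctly by the analytic-vector core from \Cref{lem:field_op_ess_self-adj}. The paper instead argues through the commutant: it computes $\Gamma_T(\pi(g))(b_T^\ast(\eta)+b_T(\eta))=b_T^\ast(\pi(g)\eta)+b_T(\pi(g)\eta)$ for the \emph{right} field operators and invokes locality, i.e.\ the duality $\Gamma_T(H,J,(U_t))=\Gamma_T^\prime(H,J,(U_t))^\prime$ for scalar-valued twisted Araki--Woods algebras, to conclude that $\Gamma_T(\pi(g))x$ commutes with all $d_T(\eta)$ and therefore lies in $\Gamma_T(H,J,(U_t))$. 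What each buys: the paper's route is a two-line computation once the commutation theorem is taken as known, but it genuinely depends on locality of $T$; your route is longer because of the core/closure bookkeeping but is self-contained, using only the equivariance hypotheses $[\pi(g),J]=0$, $[\pi(g),U_t]=0$ and $[\pi(g)\otimes\pi(g),T]=0$, and it yields the slightly stronger conclusion that $\Gamma_T(\pi)$ restricts to a genuine automorphism permuting the canonical generators.
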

\begin{proof}
Since $T$ is local, it suffices to show that $\Gamma_T(\pi(g))(b_T^\ast(\eta)+b_T(\eta))\in \Gamma_T(H,J,(U_t))^\prime$ for all $\eta\in \dom(U_{i/2})$ with $JU_{i/2}\eta=\eta$. For $\xi_1,\dots,\xi_n\in H$ we have
\begin{align*}
    \Gamma_T(\pi(g))(b_T^\ast(\eta))(\xi_1\otimes\dots\xi_n)&=\F_T(\pi(g))(\pi(g)^\ast \xi_1\otimes \pi(g)^\ast\xi_n\otimes \eta)\\
    &=\xi_1\otimes\dots\otimes\xi_n\otimes \pi(g)\eta\\
    &=b_T^\ast(\pi(g)\eta)(\xi_1\otimes\dots\otimes\xi_n).
\end{align*}
Hence $\Gamma_T(\pi(g))(b_T^\ast(\eta)+b_T(\eta))=b_T^\ast(\pi(g)\eta)+b_T(\pi(g)\eta)$. Since $\pi(g)$ commutes with $J$ and $(U_t)$ by assumption, we have $\pi(g)\eta\in \dom(U_{i/2})$ and $JU_{i/2}\pi(g)\eta=\pi(g)\eta$. Thus $b_T^\ast(\pi(g)\eta)+b_T(\pi(g)\eta)\in \Gamma_T(H,J,(U_t))^\prime$.
\end{proof}

In the following we consider the correspondence $\H_\pi$ from $L(G)$ to itself that was defined in \Cref{ex:twist_group_alg}. As discussed there, one can identify $\H_\pi^{\otimes_\tau n}$ with $\ell^2(G)\otimes H^{\otimes n}$, and we will tacitly do so in the following.

To fix notations, let us also briefly recall the definition of crossed products. If $N$ is a von Neumann algebra, $\Gamma$ a discrete group and $\rho\colon \Gamma\to \mathrm{Aut}(N)$ a group homomorphism, let
\begin{equation*}
    \alpha\colon N\to B(\ell^2(\Gamma)\otimes L^2(N)),\,\alpha(x)(\delta_\gamma\otimes \eta)=\delta_\gamma\otimes\rho(\gamma^{-1})x\eta.
\end{equation*}
The crossed product $N\rtimes_\rho\Gamma$ is the von Neumann algebra generated by $L(\Gamma)\otimes\IC 1$ and $\alpha(N)$ inside $B(\ell^2(\Gamma)\otimes L^2(N))$.

If $\psi$ is a faithful normal state on $N$ that is invariant under $\rho$, $\Omega_\psi\in L^2_+(N)$ the corresponding cyclic vector and 
\begin{equation*}
\iota_\psi\colon \ell^2(\Gamma)\to \ell^2(\Gamma)\otimes L^2(N),\,\delta_\gamma\mapsto \delta_\gamma\otimes \Omega_\psi,
\end{equation*}
then $E_\psi=\iota_\psi^\ast\,\cdot\,\iota_\psi$ is a faithful normal conditional expectation from $N\rtimes_\rho\Gamma$ onto $L(\Gamma)$. 

\begin{proposition}
Let $T$ be an equivariant local compatible braided twist on $H\otimes H$. If $\T=1_{\ell^2(G))}\otimes T$, $\U_t=1_{\ell^2(G)}\otimes U_t$ and $\psi$ is the vacuum state on $\Gamma_T(H,J,(U_t))$, then the operator-valued $W^\ast$-probability spaces $(L(G)\subset\Gamma_{\T}(\H_\pi,\J_\pi,(\U_t)),E)$ and $(L(G)\subset\Gamma_T(H,J,(U_t))\rtimes_{\Gamma_T(\pi)}G,E_\psi)$ are isomorphic.
\end{proposition}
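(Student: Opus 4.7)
\textit{Proof plan.} The strategy is to build a unitary between the Hilbert spaces on which the two operator-valued probability spaces are represented, and then verify that the generators and conditional expectations are intertwined.

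First, I would use the unitary bimodule maps $V_n\colon\H_\pi^{\otimes_\tau n}\to \ell^2(G)\otimes H^{\otimes n}$ from \Cref{ex:twist_group_alg}, together with the observation (already noted there) that $\T=1_{\ell^2(G)}\otimes T$ in coordinates implies $P_{\T,n}=1_{\ell^2(G)}\otimes P_{T,n}$. Assembling these gives a unitary bimodule map $\F_\T(V)\colon \F_\T(\H_\pi)\to \ell^2(G)\otimes \F_T(H)$. A direct check on simple tensors shows that, in this picture, the left action of $\lambda_g$ becomes $\lambda_g\otimes\F_T(\pi(g))$, the right action becomes $\rho_g\otimes 1$, and for $\eta\in H$ the field operator $s_\T(\delta_e\otimes\eta)$ becomes $1\otimes s_T(\eta)$. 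Moreover $\iota_\T$ is identified with the canonical inclusion $\ell^2(G)\hookrightarrow \ell^2(G)\otimes \F_T(H)$, $\delta_g\mapsto\delta_g\otimes\Omega_\psi$, since the vacuum is fixed by every $\F_T(\pi(g))$.

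Next I would twist this unitary so that the left copy of $L(G)$ becomes $\lambda_g\otimes 1$. Define $W\colon\ell^2(G)\otimes\F_T(H)\to \ell^2(G)\otimes\F_T(H)$ by $W(\delta_g\otimes \zeta)=\delta_g\otimes \F_T(\pi(g^{-1}))\zeta$. A short computation using that $\F_T(\pi)$ is a group homomorphism yields $W(\lambda_g\otimes \F_T(\pi(g)))W^\ast=\lambda_g\otimes 1$, and $W(1\otimes A)W^\ast=\alpha(A)$ for every $A\in B(\F_T(H))$, where $\alpha$ is the covariant representation of $\Gamma_T(H,J,(U_t))$ in the crossed product. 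In particular $\Phi:=W\F_\T(V)\,\cdot\,\F_\T(V)^\ast W^\ast$ sends $\lambda_g\mapsto \lambda_g\otimes 1$ and $s_\T(\delta_e\otimes\eta)\mapsto\alpha(s_T(\eta))$ for every $\eta\in H$ with $JU_{-i/2}\eta=\eta$ (which is exactly the condition $\J_\pi\U_{-i/2}(\delta_e\otimes\eta)=\delta_e\otimes\eta$).

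To conclude that $\Phi$ restricts to the desired isomorphism, I would check generation in both directions. The image contains $\{\lambda_g\otimes 1\}_{g\in G}$ and $\alpha(s_T(\eta))$ for every $\eta$ in the standard real subspace of $(H,J,(U_t))$, so it contains the von Neumann algebra $\Gamma_T(H,J,(U_t))\rtimes_{\Gamma_T(\pi)} G$. For the reverse inclusion, any $\xi\in \H_\pi$ with $\J_\pi\U_{-i/2}\xi=\xi$ decomposes as $\xi=\sum_g\delta_g\otimes \eta_g$ with the coefficients determined by $\eta_e\in H^\IR$ and $\eta_{g^{-1}}=U_{i/2}\pi(g^{-1})J\eta_g$; using that $a^\ast_\T(\delta_g\otimes\eta_g)=\lambda_g\,a^\ast_\T(\delta_e\otimes\pi(g^{-1})\eta_g)$ (as relative tensor product respects the bimodule action), I get $\Phi(s_\T(\xi))\in \Gamma_T(H,J,(U_t))\rtimes_{\Gamma_T(\pi)} G$ for every such $\xi$. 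Finally, since $W\F_\T(V)\iota_\T=\iota_\psi$, the equality $E_\psi\circ\Phi=\Phi\circ E_\T$ is immediate from the definitions $E_\T=\iota_\T^\ast\cdot\iota_\T$ and $E_\psi=\iota_\psi^\ast\cdot\iota_\psi$.

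The main technical point is the verification on simple tensors that $\F_\T(V)$ turns the left $L(G)$-action into $\lambda_g\otimes\F_T(\pi(g))$ and the field operator $s_\T(\delta_e\otimes\eta)$ into $1\otimes s_T(\eta)$; once this is in place, the unitary $W$ is designed precisely to absorb the twist $\F_T(\pi(g))$ into a spatial implementation of the Gaussian action, and everything else is bookkeeping.
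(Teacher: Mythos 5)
Your proposal is correct and follows essentially the same route as the paper: the composite $W\circ\F_\T(V)$ is exactly the paper's unitary $V=\bigoplus_n V_n$, which the paper writes in one step by tacitly using the identification $\H_\pi^{\otimes_\tau n}\cong\ell^2(G)\otimes H^{\otimes n}$ from \Cref{ex:twist_group_alg} and then twisting by $\pi(g^{-1})^{\otimes n}$. The verifications you list (unitarity via equivariance of $T$ with $P_{T,n}$, intertwining of $\lambda_g$ and of $s_\T(\delta_e\otimes\eta)$, the characterization of the real subspace, and $V\iota_\T=\iota_\psi$) are precisely the ones carried out in the paper.
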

\begin{proof}
Let
\begin{equation*}
    V_n\colon \ell^2(G)\otimes H_{T,n}\to \ell^2(G)\otimes H_{T,n},\,\delta_g\otimes\xi\mapsto \delta_g\otimes\pi(g^{-1})^{\otimes n}\xi.
\end{equation*}
If $g,h\in G$ and $\xi,\eta \in H^{\otimes n}$, then
\begin{align*}
    \langle V_n(\delta_g\otimes\xi),V_n(\delta_h\otimes\eta)\rangle_{\ell^2(G)\otimes H_{T,n}}&=\delta_{g,h}\langle \pi(g^{-1})^{\otimes n}\xi,\pi(g^{-1})^{\otimes n}\eta\rangle_{H_{T,n}}\\
    &=\delta_{g,h}\langle\pi(g^{-1})^{\otimes n}\xi,\pi(g^{-1})^{\otimes n}P_{T,n}\eta\rangle_{H^{\otimes n}}\\
    &=\delta_{g,h}\langle \xi,P_{T,n}\eta\rangle_{H^{\otimes n}}\\
    &=\langle \delta_g\otimes\xi,\delta_h\otimes \eta\rangle_{\ell^2(G)\otimes H_{T,n}},
\end{align*}
where we used that $\pi(g^{-1})^{\otimes n}$ commutes with $P_{T,n}$ since $T$ is equivariant. As the range of $V_n$ is dense for all $n\in\mathbb N$, we conclude that $V_n$ is unitary for all $n\in\mathbb N$.

Moreover,
\begin{align*}
    V_n(\delta_{gh}\otimes \pi(g)^{\otimes n}\xi)=\delta_{gh}\otimes \pi(h^{-1})\xi=(\lambda_g\otimes 1)V_n(\delta_h\otimes \xi)
\end{align*}
and
\begin{align*}
    V_{n+1}((1\otimes a^\ast_T(\xi))(\delta_g\otimes \eta))&=V_{n+1}(\delta_g\otimes\xi\otimes\eta)\\
    &=\delta_g\otimes \pi(g^{-1})\xi\otimes\pi(g^{-1})^{\otimes n}\eta\\
    &=(1\otimes \Gamma_T(\pi(g^{-1}))a^\ast_T(\xi))(\delta_g\otimes\pi(g^{-1})^\otimes n\eta)\\
    &=(1\otimes \Gamma_T(\pi(g^{-1}))a^\ast_T(\xi))V_n(\delta_g\otimes\eta).
\end{align*}
Moreover, if $\xi\in \dom(U_{-i/2})$, then
\begin{equation*}
    \J_\pi\U_{-i/2}(\delta_e\otimes\xi)=\J_\pi(\delta_e\otimes U_{-i/2}\xi)=\delta_e\otimes JU_{-i/2}\xi.
\end{equation*}
Hence $\J_\pi\U_{-i/2}(\delta_e\otimes\xi)=(\delta_e\otimes\xi)$ if and only if $J U_{-i/2}\xi=\xi$.

Let
\begin{equation*}
    V\colon \ell^2(G)\otimes \F_T(H)\to \ell^2(G)\otimes\F_T(H),\,V=1_{\ell^2(G)}\oplus\bigoplus_{n=1}^\infty V_n.
\end{equation*}
Then $V$ is unitary and the previous computations show
\begin{align*}
    V(\lambda_g\cdot \xi)&=(\lambda_g\otimes 1)V\xi\\
    Vs_\T(\delta_e \otimes \xi)&=1\otimes \alpha(s_T(\xi))V,
\end{align*}
where $\alpha\colon \Gamma_T(H,J,(U_t))\to B(\ell^2(G)\otimes\F_T(H))$ is the representation in the definition of the crossed product. Thus $V\Gamma_\T(\H_\pi,\J_\pi,(\U_t))V^\ast=\Gamma_T(H,J,(U_t))\rtimes_{\Gamma_T(\pi)}G$ and $VL(G)V^\ast=L(G)\otimes\IC 1$. Furthermore, $V\circ\iota_\T=\iota_\psi$, which implies $E(\cdot)=E_\psi(V\,\cdot\,V^\ast)$.
\end{proof}

\begin{remark}
    In the free case $T=0$, the structure of the crossed product $\Gamma_0(H,J,(U_t))\rtimes_{\Gamma(\pi)}G$ has been studied in \cite{HT21}. In particular, the authors characterize factoriality, determine the type classification and give sufficient criteria for fullness and strong solidity for these crossed product algebras.
\end{remark}

\section{Factoriality}\label{sec:factoriality}

In this section we study when the operator-valued Araki--Woods algebras are factors. In contrast to the scalar-valued case, $\Gamma_\T(\H,\J,(\U_t))$ can fail to be a factor even for strict twists -- for example, if $\H=L^2(M)\otimes \IC^d$, $\J=J_\phi\otimes J$ and $\U_t=\Delta_\phi^{it}\otimes 1$, then it is not hard to see that $\Gamma_0(\H,\J,(\U_t))\cong M\overline\otimes L(\mathbb F_d)$ (compare with the proof of \Cref{thm:twisted_Araki-Woods_type_I}), which is only a factor if $M$ is a factor. However, there are also interesting cases when $\Gamma_\T(\H,\J,(\U_t))$ is a factor even though $M$ is not (see \cite[Section 3]{Shl99} or \Cref{ex:coarse_bimodule_factor} below). This already suggests that the factoriality question is more subtle than in the scalar-valued case.

Here we give two sufficient criteria for factoriality. The first is based on mixing properties of the Tomita correspondence, while the second one relies on a suitable notion of the Tomita correspondence having sufficiently large dimension as bimodule.

We start with the mixing property. Let $M$ be a von Neumann algebra. Following the terminology used for example in \cite{BBM19}, a correspondence $\H$ over $M$ is called \emph{jointly mixing} if $\pi_l^\H(x_n)\pi_r^\H(y_n)\to 0$ in the weak$^\ast$ topology whenever $(x_n)$, $(y_n)$ are bounded sequences in $M$ such that $x_n\to 0$ or $y_n\to 0$ in the weak$^\ast$ topology.

\begin{example}\label{ex:coarse_bimodule}
    If $\pi$ is a normal representation of $M$ on a Hilbert space $H$, then $\mathrm{HS}(H)$ with the left and right action given by $x\cdot R\cdot y=\pi(x)R\pi(y)$ is a jointly mixing correspondence from $M$ to itself. In particular, the coarse bimodule $L^2(M)\otimes L^2(M)$ is jointly mixing.
    
    Note that if $(\pi,H,(U_t))$ is a covariant representation of $(M,\sigma^\phi_t)$, that is, $U_t\pi(x)U_{-t}=\pi(\sigma^\phi_t(x))$, then one can turn $\mathrm{HS}(H)$ into a Tomita correspondence by defining $\J(R)=R^\ast$ and $\U_t(R)=U_t R U_{-t}$.
\end{example}

\begin{lemma}\label{lem:twisted_bimodule_mixing}
If $\H$ is a jointly mixing Tomita correspondence, then $\H_{\T,k}$ is jointly mixing for all $k\geq 1$ and all strict twists $\T$.
\end{lemma}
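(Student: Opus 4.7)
The plan is to first reduce to joint mixing of the untwisted relative tensor power $\H^{\otimes_\phi k}$ and then induct on $k$. Since $\T$ is strict, $P_{\T,k}$ is a positive, bounded, invertible bimodule map on $\H^{\otimes_\phi k}$, hence bounded below by a positive constant. Consequently the inner products $\langle\,\cdot\,,\cdot\,\rangle$ and $\langle\,\cdot\,,\cdot\,\rangle_{\T,k}=\langle\,\cdot\,,P_{\T,k}\,\cdot\,\rangle$ on $\H^{\otimes_\phi k}$ are equivalent, so $\H_{\T,k}$ coincides with $\H^{\otimes_\phi k}$ as an $M$-bimodule with equivalent topology. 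Moreover, since $P_{\T,k}$ commutes with both actions, for all $\xi,\eta\in \H^{\otimes_\phi k}$ and $x,y\in M$ one has $\langle \xi,x\eta y\rangle_{\T,k}=\langle \xi,x(P_{\T,k}\eta)y\rangle$, so as $\eta$ ranges over $\H_{\T,k}$ the vector $P_{\T,k}\eta$ ranges over $\H^{\otimes_\phi k}$. Hence joint mixing of $\H_{\T,k}$ is equivalent to joint mixing of $\H^{\otimes_\phi k}$.

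I then prove by induction on $k$ that $\H^{\otimes_\phi k}$ is jointly mixing. The base case $k=1$ is the hypothesis on $\H$. For the inductive step, I write $\H^{\otimes_\phi k}=\H\otimes_\phi \H^{\otimes_\phi(k-1)}$ and consider bounded sequences $(x_n),(y_n)$ in $M$ with $x_n\to 0$ or $y_n\to 0$ weakly. By uniform boundedness, it suffices to verify $\langle\xi,x_n\eta y_n\rangle\to 0$ on a dense subset of $\H^{\otimes_\phi k}$, which (using density of left $\phi$-bounded vectors in $\H$) I may take to consist of simple tensors $\xi=\xi_1\otimes_\phi\hat\xi$ and $\eta=\eta_1\otimes_\phi\hat\eta$ with $\xi_1,\eta_1\in L^\infty(\H_M,\phi)$ and $\hat\xi,\hat\eta\in \H^{\otimes_\phi(k-1)}$. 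The identity $L_\phi(x\eta_1)=\pi_l^\H(x)L_\phi(\eta_1)$ together with the defining inner-product formula of the relative tensor product yields
\begin{equation*}
\langle \xi_1\otimes_\phi\hat\xi,\,x_n(\eta_1\otimes_\phi\hat\eta)y_n\rangle=\langle \hat\xi,\,a_n\cdot(\hat\eta y_n)\rangle,
\end{equation*}
where $a_n:=L_\phi(\xi_1)^\ast\pi_l^\H(x_n)L_\phi(\eta_1)\in M$ is uniformly bounded. When $x_n\to 0$ weakly, normality of $\pi_l^\H$ gives $\pi_l^\H(x_n)\to 0$ in the weak operator topology, hence $a_n\to 0$ weakly in $M$, and the inductive hypothesis applied to $(a_n)$ and $(y_n)$ yields $\langle\hat\xi,a_n\hat\eta y_n\rangle\to 0$. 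When $y_n\to 0$ weakly, the bounded sequence $(a_n)$ together with $y_n\to 0$ yields the same conclusion via the inductive hypothesis on $\H^{\otimes_\phi(k-1)}$.

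The heart of the argument is really just the transfer of joint mixing from $\H$ to $\H\otimes_\phi \H^{\otimes_\phi(k-1)}$, and beyond normality of the left action, the module property of $L_\phi$, and the inner-product formula for $\otimes_\phi$, nothing deeper is required. The main obstacle is bookkeeping: one must keep straight that $(a_n)$ is always uniformly bounded (not just weakly null), and one must correctly identify which sequence is the weakly null one when invoking the inductive hypothesis. Strictness of $\T$ enters only once, in the initial reduction step, where it guarantees that $P_{\T,k}$ is invertible and hence that the two inner products yield the same underlying topological bimodule structure.
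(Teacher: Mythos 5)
Your proof is correct and follows essentially the same route as the paper: the same computation $\langle \xi_1\otimes_\phi\hat\xi,\,x_n(\eta_1\otimes_\phi\hat\eta)y_n\rangle=\langle\hat\xi,\,L_\phi(\xi_1)^\ast\pi_l^\H(x_n)L_\phi(\eta_1)\cdot(\hat\eta y_n)\rangle$ drives an induction on the relative tensor powers, and the passage to $\H_{\T,k}$ uses that $P_{\T,k}$ is a bimodule map via $\langle\xi,x\eta y\rangle_{\T,k}=\langle\xi,x(P_{\T,k}\eta)y\rangle$. The only cosmetic difference is the order of the two steps (you reduce to $\H^{\otimes_\phi k}$ first, the paper does the induction first), and your invocation of invertibility of $P_{\T,k}$ is not actually needed for this implication, since positivity and the bimodule property already suffice.
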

\begin{proof}
We first prove that the relative tensor product $\H\otimes_\phi\mathcal K$ is jointly mixing if $\mathcal K$ is jointly mixing. Let $(x_n)$, $(y_n)$ be bounded sequences in $M$ such that one of them converges to $0$ in the weak$^\ast$ topology.

Since the sequences are bounded, it suffices to show that
\begin{equation*}
\langle \xi_1\otimes_\phi\eta_1,x_n\xi_2\otimes_\phi\eta_2 y_n\rangle_{\H\otimes_\phi\mathcal K}\to 0
\end{equation*}
for all $\xi_1,\xi_2\in \H$ left $\phi$-bounded and $\eta_1,\eta_2\in \mathcal K$.

We have
\begin{equation*}
\langle \xi_1\otimes_\phi\eta_1,x_n\xi_2\otimes_\phi\eta_2 y_n\rangle_{\H\otimes_\phi\mathcal K}=\langle \eta_1,\pi_l^{\mathcal K}(L_\phi(\xi_1)^\ast x_n L_\phi(\xi_2))\pi_r^{\mathcal K}(y_n)\eta_2\rangle_{\mathcal K}.
\end{equation*}
By assumption, $L_\phi(\xi_1)^\ast x_n L_\phi(\xi_2)\to 0$ or $y_n\to 0$ in the weak$^\ast$ topology. As $\mathcal K$ is jointly mixing, it follows that $\langle \xi_1\otimes_\phi\eta_1,x_n\xi_2\otimes_\phi\eta_2 y_n\rangle_{\H\otimes_\phi\mathcal K}\to 0$.

It follows by induction that $\H^{\otimes_\phi k}$ is jointly mixing for all $k\geq 1$. To prove that $\H_{\T,k}$ is jointly mixing, let again $(x_n)$, $(y_n)$ be bounded sequences in $M$ such that one of them converges to $0$ in the weak$^\ast$ topology. Again, by a density argument it suffices to show that
\begin{equation*}
\langle \xi,x_n\eta y_n\rangle_{\T,k}\to 0
\end{equation*}
for all $\xi,\eta\in \H^{\otimes_\phi k}$. Since $\mathcal P_{\T,k}$ is a bimodule map, we have $\langle \xi,x_n\eta y_n\rangle_{\T,k}=\langle\xi,x_n(\mathcal P_{\T,k}\eta)y_n\rangle_{\H^{\otimes_\phi k}}$, and the claimed convergence follows immediately from the fact that $\H^{\otimes_\phi k}$ is jointly mixing.
\end{proof}

The following result is an adaptation of \cite[Theorem 5.1]{KV19}, where the result was proved under the assumption that $\phi$ is a trace, $\T=0$ and $\U_t=\mathrm{id}_\H$ (with a slightly different mixing condition). We call an element of $M$ \emph{diffuse} if it is self-adjoint and its spectral measure has no atoms.

\begin{proposition}\label{prop:mixing_factor}
Let $(\H,\mathcal J,(\mathcal U_t))$ be a jointly mixing Tomita correspondence over $(M,\phi)$ with $\phi$ finite and let $\T$ be a local compatible braided strict twist on $\H\otimes_\phi\H$. If the centralizer $M^\phi$ of $\phi$ contains a diffuse element, then 
\begin{equation*}
\mathcal Z(\Gamma_\T(\H,\mathcal J,(\mathcal U_t)))=\{z\in\mathcal Z(M)\mid z\xi=\xi z\text{ for all }\xi\in\H\}.
\end{equation*}
In particular, if $M$ is a factor, so is $\Gamma_\T(\H,\mathcal J,(\mathcal U_t))$.
\end{proposition}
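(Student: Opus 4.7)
The plan is to show any central element $x$ of $\Gamma_\T(\H,\J,(\U_t))$ must lie in $M$ by exploiting the mixing condition against a diffuse unitary, and then characterize which elements of $M$ fall in the center by unpacking commutation with the field operators.

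First I would prove $\mathcal Z(\Gamma_\T(\H,\J,(\U_t)))\subset M$. Take $x$ central, write $\Omega=\Lambda_\phi(1)$, and decompose $\Lambda_\T(x)=x\Omega\in\F_\T(\H)$ as $\sum_n\xi_n$ with $\xi_n\in\H_{\T,n}$. Let $h\in M^\phi$ be the diffuse element and set $u_k=e^{ikh}$. Riemann--Lebesgue applied to the (atomless) spectral measure of $h$ gives $u_k\to 0$ weakly. Since $u_k\in M^\phi$, one has $u_k\Omega=\Omega u_k$ in $L^2(M)$. The right action of $M$ on $\F_\T(\H)$ commutes with both the left action of $M$ and with every $a_\T^\ast(\xi)$ (by construction of the relative tensor product), hence sits inside $\Gamma_\T(\H,\J,(\U_t))'$. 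Combining $[x,u_k]=0$ with $x$'s commutation with the right $M$-action yields $u_k\Lambda_\T(x)=\Lambda_\T(x)u_k$, and componentwise $u_k\xi_n=\xi_n u_k$ in $\H_{\T,n}$.

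For $n\geq 1$, $\H_{\T,n}$ is jointly mixing by \Cref{lem:twisted_bimodule_mixing}; since both $u_k$ and $u_k^\ast$ tend to zero weakly, joint mixing gives $\langle\xi_n,u_k\xi_n u_k^\ast\rangle\to 0$. But $u_k\xi_n u_k^\ast=\xi_n$, so $\xi_n=0$. Thus $\Lambda_\T(x)\in L^2(M)$. Since $x$ is a right $M$-module map on $\F_\T(\H)$ sending $\Omega$ into $L^2(M)=\overline{\Omega\cdot M}$, it maps $L^2(M)$ into itself as a right $M$-module map, and therefore acts there as left multiplication by some $y\in M$. Faithfulness of $\hat\phi_\T$ (\Cref{prop:standard_form_Araki-Woods}) makes $\Omega$ separating for $\Gamma_\T(\H,\J,(\U_t))$, so $x=y\in M$.

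For the second step, $x\in M\cap\mathcal Z(\Gamma_\T(\H,\J,(\U_t)))$ commutes with $M$, so $x\in\mathcal Z(M)\subset M^\phi$; in particular $x\Omega=\Omega\cdot x$. For any left-bounded $\xi\in\dom(\U_{-i/2})$ with $\J\U_{-i/2}\xi=\xi$ apply $[x,s_\T(\xi)]=0$ to $\Omega$: using $s_\T(\xi)\Omega=L_\phi(\xi)\Omega=\xi$, the right-modularity of $L_\phi(\xi)$, and $x\Omega=\Omega\cdot x$, one computes
\[ x\xi=xs_\T(\xi)\Omega=s_\T(\xi)(\Omega\cdot x)=(s_\T(\xi)\Omega)\cdot x=\xi x. \]
Such $\xi$ span a complex subspace containing the dense set $\H_0$ of \Cref{prop:bdd_analytic_vectors_dense}, so by continuity $x\xi=\xi x$ for every $\xi\in\H$. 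The reverse inclusion is routine: any $z\in\mathcal Z(M)$ satisfying $z\xi=\xi z$ for all $\xi\in\H$ commutes with the left $M$-action and with every twisted creation operator (balancing in the relative tensor product), hence with every $s_\T(\xi)$.

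The main obstacle I anticipate is establishing the clean identity $u_k\Lambda_\T(x)=\Lambda_\T(x)u_k$, which requires both that the right $M$-action genuinely sits in $\Gamma_\T(\H,\J,(\U_t))'$ and the modular-theoretic fact $u_k\Omega=\Omega u_k$ on the vacuum, using $u_k\in M^\phi$. Once this equality is in hand, the mixing property reduces the problem to the $n=0$ component, and the second step is a short calculation against the vacuum.
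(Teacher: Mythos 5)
Your argument is essentially the paper's own proof: a weakly null sequence of unitaries from the diffuse element, the mixing lemma to kill the components of $\Lambda_\T(x)$ in $\H_{\T,n}$ for $n\geq 1$ (the paper phrases this as $(x-E_\T(x))\Omega=u_k(x-E_\T(x))\Omega u_k^\ast\to 0$, which is the same computation), and then the vacuum computation $x\xi=\xi x$ together with the converse via balancing in the relative tensor product. One technical slip: Riemann--Lebesgue does not give $e^{ikh}\to 0$ weakly for a general diffuse $h$, since the spectral measure is merely atomless and need not be absolutely continuous (its Fourier--Stieltjes transform can fail to vanish at infinity, e.g.\ for Cantor-type spectral measures); the correct statement, which is what the paper invokes, is that the diffuse abelian algebra $\{h\}''$ contains \emph{some} sequence of unitaries converging to $0$ in the weak$^\ast$ topology (e.g.\ Haar unitaries under an identification $\{h\}''\cong L^\infty[0,1]$), and with that replacement your proof goes through unchanged.
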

\begin{proof}
If $x\in M^\phi$ is diffuse, then there exists a sequence $(u_n)$ of unitaries in $\{x\}^{\prime\prime}$ that converges to $0$ in the weak$^\ast$ topology. Let $z\in\mathcal Z(\Gamma_\T(\H,\mathcal J,(\mathcal U_t)))$ and let $\Omega\in L^2_+(M)$ denote the cyclic and separating vector representing $\phi$. We have
\begin{equation*}
(z-E_\T(z))\Omega=(u_n z u_n^\ast-E_\T(u_n z u_n^\ast))\Omega=u_n(z-E_\T(z))u_n^\ast\Omega=u_n(z-E_\T(z))\Omega u_n^\ast.
\end{equation*}
Note that $(z-E_\T(z))\Omega\in \F_\T(\H)\ominus L^2(M)$. It follows from \Cref{lem:twisted_bimodule_mixing} that $u_n(z-E(z))\Omega u_n^\ast\to 0$ weakly. Thus $z=E_\T(z)\in M\cap M^\prime$.

Moreover, $z\in \mathcal Z(\Gamma_M(\H,\mathcal J,(\mathcal U_t))$ implies
\begin{equation*}
z\xi=z s_\T(\xi)\Omega=s_\T(\xi)z\Omega=\xi z
\end{equation*}
for all bounded vectors $\xi\in \H$. As bounded vectors are dense in $\H$, it follows that $z\xi=\xi z$ for all $\xi\in \H$.

Conversely, if $x\in \mathcal Z(M)$ and $x\xi=\xi x$ for all $\xi\in \H$, then
\begin{equation*}
        x s_\T(\xi)\Omega=x\xi=\xi x=s_\T(\xi)J_\phi x^\ast \Omega=s_\T(\xi)x\Omega
    \end{equation*}
    for all left-bounded vectors $\xi\in \H$ with $\J\U_{-i/2}\xi=\xi$. Therefore, $x\in \mathcal Z(\Gamma_\T(\H,\J,(\U_t)))$.
\end{proof}

\begin{example}\label{ex:coarse_bimodule_factor}
    Let $(\pi,H,(U_t))$ be a normal faithful covariant representation of $(M,\sigma^\phi)$ and let $(\mathrm{HS}(H),\J,(\U_t))$ be the Tomita correspondence described in \Cref{ex:coarse_bimodule}. This correspondence is jointly mixing. Moreover, if $\pi(x)R=R\pi(x)$ for all $R\in\mathrm{HS}(H)$, then $\pi(x)\in \IC 1$, hence $x\in \mathbb C 1$ since $\pi$ was assumed to be faithful. Thus $\Gamma_\T(\mathrm{HS}(H),\J,(\U_t))$ is a factor if $M^\phi$ contains a diffuse element and $\T$ is a local compatible braided strict twist on $\H\otimes_\phi\H$.
\end{example}

For the second factoriality criterion we need the operator inequalities from \Cref{lem:bound_twisted_norm_tensor_product}. For the ease of notation, we write
\begin{equation*}
    c(q)=\prod_{k=1}^\infty\frac{1-q^k}{1+q^k},\quad d(q)=(1-q)^{-1}
\end{equation*}
for $0\leq q<1$. Note that these are the limits as $n\to \infty$ of the constants appearing in \Cref{lem:bound_twisted_norm_tensor_product}.

The following result is a direct consequence of \Cref{lem:bound_twisted_norm_tensor_product}.
\begin{lemma}\label{lem:bound_change_twisted_norm}
    Let $M$ be a von Neumann algebra, $\phi$ a normal semi-finite faithful weight on $M$ and $\H$ a correspondence from $M$ to itself. If $\T$ is a braided twist on $\H\otimes_\phi \H$ with $\norm{\T}<1$, then the identity map on $L^\infty(\H_M,\phi)\odot \F_\T(\H)$ (resp. $\F_\T(\H)\odot L^\infty(_M\H,\phi)$) extends to a bounded invertible operator $j_l$ (resp. $j_r$) from $\H\otimes_\phi\F_\T(\H)$ (resp. $\F_\T(\H)\otimes_\phi\H$) to $\F_\T(\H)\ominus L^2(M)$ with $\norm{j_{l/r}}\leq d(\norm{\T})^{1/2}$, $\norm{j_{l/r}^{-1}}\leq c(\norm{\T})^{-1/2}$.
\end{lemma}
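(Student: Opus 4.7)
The plan is to work gradewise and reduce everything to the two-sided operator inequality in \Cref{lem:bound_twisted_norm_tensor_product}. First, I would decompose
$$\H\otimes_\phi\F_\T(\H)=(\H\otimes_\phi L^2(M))\oplus\bigoplus_{n\geq 1}\H\otimes_\phi\H_{\T,n},\qquad \F_\T(\H)\ominus L^2(M)=\bigoplus_{n\geq 1}\H_{\T,n}.$$
The canonical isomorphism $\H\otimes_\phi L^2(M)\cong \H=\H_{\T,1}$ (note $P_{\T,1}=1$) is already unitary and accommodates the stated bounds trivially. So it remains to analyze the identity map from $\H\otimes_\phi\H_{\T,n}$ to $\H_{\T,n+1}$ for every $n\geq 1$.

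The key observation is that $\H\otimes_\phi\H_{\T,n}$ can be identified with the Hilbert space obtained from $\H^{\otimes_\phi(n+1)}$ after separation and completion with respect to the positive semi-definite form $\langle\,\cdot\,,(1\otimes_\phi P_{\T,n})\,\cdot\,\rangle$. Indeed, unwinding the weight-based definition of the relative tensor product recalled in Section \ref{sec:rel_tensor_prod} and using that $P_{\T,n}$ is a bimodule map, one computes for left $\phi$-bounded $\xi_1,\xi_2\in\H$ and $\eta_1,\eta_2\in\H^{\otimes_\phi n}$
\begin{align*}
\langle\xi_1\otimes_\phi[\eta_1],\xi_2\otimes_\phi[\eta_2]\rangle_{\H\otimes_\phi\H_{\T,n}}
&=\langle\eta_1,P_{\T,n}(L_\phi(\xi_1)^\ast L_\phi(\xi_2)\cdot\eta_2)\rangle_{\H^{\otimes_\phi n}}\\
&=\langle\xi_1\otimes_\phi\eta_1,(1\otimes_\phi P_{\T,n})(\xi_2\otimes_\phi\eta_2)\rangle_{\H^{\otimes_\phi(n+1)}}.
\end{align*}
Since $\H_{\T,n+1}$ is by definition the completion of $\H^{\otimes_\phi(n+1)}$ with respect to $\langle\,\cdot\,,P_{\T,n+1}\,\cdot\,\rangle$, the identity on algebraic tensors descends to the map I want, provided the two semi-definite forms have the same kernel and induce equivalent norms. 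Under the hypothesis $\norm\T<1$, both follow at once from the inequality
$$c_n(\norm\T)\,(1\otimes_\phi P_{\T,n})\leq P_{\T,n+1}\leq\Big(\sum_{k=0}^n\norm\T^k\Big)(1\otimes_\phi P_{\T,n})$$
of \Cref{lem:bound_twisted_norm_tensor_product}, combined with the uniform estimates $\sum_{k=0}^n\norm\T^k\leq d(\norm\T)$ and $c_n(\norm\T)\geq c(\norm\T)$. This gives the graded norm bound $d(\norm\T)^{1/2}$ and inverse bound $c(\norm\T)^{-1/2}$, and assembling the pieces into a direct sum produces $j_l$.

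The construction of $j_r$ is entirely symmetric: one identifies $\H_{\T,n}\otimes_\phi\H$ with the completion of $\H^{\otimes_\phi(n+1)}$ under $\langle\,\cdot\,,(P_{\T,n}\otimes_\phi 1)\,\cdot\,\rangle$ and applies the variant of \Cref{lem:bound_twisted_norm_tensor_product} with $1\otimes_\phi P_{\T,n}$ replaced by $P_{\T,n}\otimes_\phi 1$, which is precisely where the braiding hypothesis on $\T$ enters. I do not see any genuine obstacle here; the only point requiring some care is the identification of $\H\otimes_\phi\H_{\T,n}$ (and symmetrically $\H_{\T,n}\otimes_\phi\H$) sketched above, but this is a routine manipulation from the formula for the weight-based relative tensor product.
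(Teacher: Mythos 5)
Your proof is correct and follows exactly the route the paper intends: the paper states \Cref{lem:bound_change_twisted_norm} without proof as a ``direct consequence'' of \Cref{lem:bound_twisted_norm_tensor_product}, and your gradewise identification of $\H\otimes_\phi\H_{\T,n}$ (resp.\ $\H_{\T,n}\otimes_\phi\H$) with the completion of $\H^{\otimes_\phi(n+1)}$ under the form given by $1\otimes_\phi P_{\T,n}$ (resp.\ $P_{\T,n}\otimes_\phi 1$), combined with the uniform bounds $c_n(\norm{\T})\geq c(\norm{\T})$ and $\sum_{k=0}^n\norm{\T}^k\leq d(\norm{\T})$, is precisely the argument being alluded to. You also correctly locate where the braiding hypothesis is needed, namely for the $P_{\T,n}\otimes_\phi 1$ version of the operator inequality used for $j_r$.
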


\begin{proposition}[Spectral Gap]\label{prop:spectral_gap}
Let $(\H,\J,(\U_t))$ be a Tomita correspondence over $(M,\phi)$ and assume there exists a finite family $(\xi_i)_{i\in I}$ in $\H_0$ such that
\begin{itemize}
    \item $\J\xi_i=\xi_i$ for all $i\in I$,
    \item $L_\phi(\xi_i)^\ast L_\phi(\xi_j)=\delta_{ij}1$ for all $i,j\in I$,
    \item there exists a family $(p_i)_{i\in I}$ of projections in $M$ such that $L_\phi(\U_{i/2}\xi_i)^\ast L_\phi(\U_{i/2}\xi_j)=\delta_{ij}p_i$ for all $i,j\in I$.
\end{itemize}
There exists a function $f\colon [0,1)\to [0,\infty)$  such that whenever $\T$ is a local braided compatible twist on $\H\otimes_\phi\H$ with $\norm{\T}<1$ and $\abs{I}>f(\norm{\T})$, then there exists a constant $\kappa>0$ such that
\begin{equation*}
    \sum_{i\in I}\lVert \F_\T(\J)x^\ast \F_\T(\J)\xi_i-x\xi_i\rVert_{\F_\T(\H)}^2\geq \kappa^2 \hat\phi_\T(\abs{x-E_\T(x)}^2)
\end{equation*}
for all $x$ in the centralizer of $\hat\phi_\T$.
\end{proposition}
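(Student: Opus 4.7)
The plan is to interpret the LHS as the square-norm of a derivation-type gradient applied to the vacuum vector and to bound it from below by a Bessel inequality on the creation side, while controlling the error terms from the annihilation side using the second orthogonality hypothesis. I assume throughout that $\phi$ is a state with cyclic vector $\Omega$ (the general case follows by a standard reduction to $x - E_\T(x) \in \n_{\hat\phi_\T}$).

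Setting $y = x - E_\T(x)$, I would split $\F_\T(\J)x^\ast\F_\T(\J)\xi_i - x\xi_i$ into the piece coming from $E_\T(x) \in M^\phi$, which equals $\xi_i E_\T(x) - E_\T(x)\xi_i \in \H$, and the piece coming from $y$. Since the first piece sits in the level-$1$ component $\H_{\T,1}$, projecting onto $\bigoplus_{n\geq 2}\H_{\T,n}$ eliminates it; hence it suffices to bound $\sum_i\|P_{\geq 2}(\F_\T(\J)y^\ast\F_\T(\J)\xi_i - y\xi_i)\|^2$ from below by $\kappa^2\|y\Omega\|^2$ where $y\Omega \in \F_\T(\H)\ominus L^2(M)$. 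Using $\xi_i = a^\ast_\T(\xi_i)\Omega$, the fact that $\F_\T(\J)y^\ast\F_\T(\J) \in \Gamma_\T(\H,\J,(\U_t))'$ with $\F_\T(\J)y^\ast\F_\T(\J)\Omega = y\Omega$, and a direct bimodule computation (decomposing the action into creation and annihilation parts and tracking how $\F_\T(\J)$ transforms them via \Cref{lem:intertwining_left_right_creation} and its dual), I obtain an identity of the form
\[
P_{\geq 2}\bigl(\F_\T(\J)y^\ast\F_\T(\J)\xi_i - y\xi_i\bigr) = a^\ast_\T(\xi_i)y\Omega + R_i(y),
\]
where $R_i(y)$ collects annihilation contributions and corrections of the form $a_\T(\xi_i)\,P_{\geq 3}(y\Omega)$ modular-twisted through $\U_{i/2}$, together with right-creation terms of $\U_{i/2}\xi_i$.

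For the dominant term, the first orthogonality condition $L_\phi(\xi_i)^\ast L_\phi(\xi_j) = \delta_{ij}$ and the lower bound $P_{\T,n+1} \geq c_n(\|\T\|)(1\otimes_\phi P_{\T,n})$ from \Cref{lem:bound_twisted_norm_tensor_product} yield, for any $\zeta \in \H_{\T,n}$,
\[
\sum_{i\in I}\|\xi_i\otimes_\phi\zeta\|_{\T,n+1}^2 \geq c_n(\|\T\|)\sum_{i\in I}\langle \zeta, L_\phi(\xi_i)^\ast L_\phi(\xi_i)\cdot P_{\T,n}\zeta\rangle = |I|\,c_n(\|\T\|)\,\|\zeta\|_{\T,n}^2.
\]
Summing over the homogeneous components of $y\Omega$ gives $\sum_{i\in I}\|a^\ast_\T(\xi_i)y\Omega\|^2 \geq |I|\,c(\|\T\|)\,\|y\Omega\|^2$, with $c(\|\T\|) = \prod_{k\geq 1}(1-\|\T\|^k)/(1+\|\T\|^k) > 0$.

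The main obstacle is bounding $\sum_i\|R_i(y)\|^2$ by $C(\|\T\|)\|y\Omega\|^2$ independently of $|I|$; this is where the second orthogonality hypothesis $L_\phi(\U_{i/2}\xi_i)^\ast L_\phi(\U_{i/2}\xi_j) = \delta_{ij}p_i$ is indispensable. Because $\F_\T(\J) = J_\T$ implements the modular conjugation of $\hat\phi_\T$, the identity $J_\T a^\ast_\T(\xi)J_\T = b^\ast_\T(\J\xi)$ of \Cref{lem:intertwining_left_right_creation} holds cleanly only when $\J\U_{-i/2}\xi = \xi$; for our $\xi_i$ (only $\J\xi_i = \xi_i$), the correct dualization introduces the modular twist $\U_{i/2}\xi_i$, and the prescribed orthogonality $L_\phi(\U_{i/2}\xi_i)^\ast L_\phi(\U_{i/2}\xi_j) = \delta_{ij}p_i$ is exactly what one needs to derive a dual Bessel-type upper bound on $R_i(y)$ via the upper inequality $P_{\T,n+1} \leq (\sum_{k=0}^n\|\T\|^k)(1\otimes_\phi P_{\T,n})$. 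Combining $\|A + R_i\|^2 \geq \tfrac{1}{2}\|A\|^2 - \|R_i\|^2$ with the two estimates yields
\[
\sum_{i\in I}\|\F_\T(\J)x^\ast\F_\T(\J)\xi_i - x\xi_i\|^2 \geq \bigl(\tfrac{1}{2}|I|c(\|\T\|) - C(\|\T\|)\bigr)\|y\Omega\|^2,
\]
so setting $f(q) = 2C(q)/c(q)$ ensures positivity whenever $|I| > f(\|\T\|)$, producing the desired $\kappa > 0$.
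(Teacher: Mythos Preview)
Your decomposition misidentifies the structure of the remainder. Writing $y=x-E_\T(x)$ (one may assume $E_\T(x)=0$, since $E_\T(x)$ also lies in the centralizer) and using that $a_\T^\ast(\xi_i)+a_\T(\U_{i/2}\xi_i)\in\Gamma_\T(\H,\J,(\U_t))$ while $b_\T^\ast(\xi_i)+b_\T(\U_{-i/2}\xi_i)\in\Gamma_\T(\H,\J,(\U_t))'$ (each is a complex-linear combination of field operators thanks to $\J\xi_i=\xi_i$), one gets the exact identity
\[
\F_\T(\J)x^\ast\F_\T(\J)\xi_i - x\xi_i = \bigl[a_\T^\ast(\xi_i) - b_\T^\ast(\xi_i) + a_\T(\U_{i/2}\xi_i) - b_\T(\U_{-i/2}\xi_i)\bigr]\,y\Omega.
\]
The modular twist $\U_{\pm i/2}$ sits in the \emph{annihilation} terms, not the creation terms; your $R_i$ therefore contains $-b_\T^\ast(\xi_i)y\Omega$. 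Since $\J\xi_i=\xi_i$ together with the first hypothesis yields $R_\phi(\xi_i)^\ast R_\phi(\xi_j)=\delta_{ij}1$, the very computation you carry out for the left creation also gives $\sum_{i}\lVert b_\T^\ast(\xi_i)y\Omega\rVert^2 \geq \abs{I}\,c(\norm{\T})\,\lVert y\Omega\rVert^2$. Hence your remainder scales like $\abs{I}$ and the inequality $\lVert A+R\rVert^2 \geq \tfrac12\lVert A\rVert^2 - \lVert R\rVert^2$ yields nothing.

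The second orthogonality hypothesis is not used to control creation at all: it bounds the annihilation piece $S=\sum_i e_i\otimes(a_\T(\U_{i/2}\xi_i)-b_\T(\U_{-i/2}\xi_i))$ uniformly in $\abs{I}$, by estimating $\norm{S^\ast}$ --- the adjoint creates with the family $(\U_{i/2}\xi_i)$, and the projections $p_i$ give $\sum_i\lVert p_i\eta_i\rVert^2\leq\sum_i\lVert\eta_i\rVert^2$. The real difficulty is the creation difference $T=\sum_i e_i\otimes(a_\T^\ast(\xi_i)-b_\T^\ast(\xi_i))$, where both summands are individually of order $\abs{I}^{1/2}$. The paper (following \'Sniady) does not separate them: it constructs an operator $\tilde Z$ with $\norm{\tilde Z}\leq \abs{I}^{1/2}c(\norm{\T})^{-1/2}$ such that $\tilde Z T=\abs{I}-\tilde X$ on $\F_\T(\H)\ominus L^2(M)$, where $\tilde X$ arises from the cross map $\eta\otimes_\phi\zeta\mapsto\sum_i L_\phi(\xi_i)^\ast L_\phi(\eta)\zeta\otimes_\phi\xi_i$ and satisfies $\norm{\tilde X}\leq c(\norm{\T})^{-1/2}d(\norm{\T})^{1/2}$ independently of $\abs{I}$. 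This forces $\abs{T}\geq c(\norm{\T})^{1/2}\abs{I}^{1/2}-d(\norm{\T})^{1/2}\abs{I}^{-1/2}$, and the triangle inequality with the bound on $S$ finishes.
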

\begin{proof}
The idea of the proof follows \cite{Sni04}. First note that the assumptions imply $\J\U_{-i/2}\xi_i=\U_{i/2}\xi_i$, $\J\U_{i/2}\xi_i=\U_{-i/2}\xi_i$ and $R_\phi(\xi_i)^\ast R_\phi(\xi_j)=\delta_{ij}1$, $R_\phi(\U_{-i/2}\xi_i)^\ast R_\phi(\U_{-i/2}\xi_j)=\delta_{ij}J_\phi p_i J_\phi$.

Define 
\begin{align*}
S&\colon \mathcal F_\T(\H)\to \ell^2(I)\otimes\mathcal F_\T(\H),\,S \eta=\sum_{i\in I}e_i\otimes (a_\T(\U_{i/2}\xi_i)-b_\T(\U_{-i/2}\xi_i))\eta,\\
T&\colon \mathcal F_\T(\H)\to \ell^2(I)\otimes\mathcal F_\T(\H),\,T \eta=\sum_{i\in I}e_i\otimes (a_\T^\ast(\xi_i)-b_\T^\ast(\xi_i))\eta,
\end{align*}
where $(e_i)$ is the canonical orthonormal basis for $\ell^2(I)$.

\emph{Step 1:} We first show that $\norm{S}\leq 2d(\norm{\T})^{1/2}$.

For that purpose let $S_{l}\eta=\sum_{i\in I}e_i\otimes a_\T(\U_{i/2}\xi_i)\eta$ and $S_{r}\eta=\sum_{i\in I}e_i\otimes b_\T(\U_{-i/2}\xi_i)\eta$. With 
\begin{equation*}
    \Pi_i\colon \ell^2(I)\otimes\F_\T(\H)\to \F_\T(\H),\,e_j\otimes\eta\mapsto \delta_{ij}\eta
\end{equation*}
we have $S_{l}^\ast=\sum_{i\in I}a_\T^\ast(\U_{i/2}\xi_i)\Pi_i$ and $S_{r}^\ast=\sum_{i\in I}b_\T^\ast(\U_{-i/2}\xi_i)\Pi_i$.

From \Cref{lem:bound_twisted_norm_tensor_product} we deduce
\begin{align*}
    \left\lVert S_{l}^\ast\left(\sum_{i\in I}e_i\otimes \eta_i\right)\right\rVert_{\F_\T(\H)}^2
    &=\left\lVert\sum_{i\in I} a_\T^\ast(\U_{i/2}\xi_i)\eta_i\right\rVert_{\F_\T(\H)}^2\\
    &=\left\lVert \sum_{i\in I}\U_{i/2}\xi_i\otimes_\phi \eta_i\right\rVert_{\F_\T(\H)}^2\\
    &\leq d(\norm{\T})\left\lVert \sum_{i\in I}\U_{i/2}\xi_i\otimes_\phi \eta_i\right\rVert_{\H\otimes_\phi\F_\T(\H)}^2\\
    &=d(\norm{\T})\sum_{i,k\in I}\langle \eta_i,L_\phi(\U_{i/2}\xi_i)^\ast L_\phi(\U_{i/2}\xi_k)\eta_k\rangle_{\F_\T(\H)}\\
    &=d(\norm{\T})\sum_{i\in I}\norm{p_i\eta_i}_{\F_\T(\H)}^2\\
    &\leq d(\norm{\T})\left\lVert \sum_{i\in I}e_i\otimes \eta_i\right\rVert_{\ell^2(I)\otimes\F_\T(\H)}^2.
\end{align*}
It follows that $\norm{S_{l}}\leq d(\norm{\T})^{1/2}$. An analogous calculation shows $\norm{S_{r}}\leq d(\norm{\T})^{1/2}$. Thus $\norm{S}\leq 2d(\norm{\T})^{1/2}$.

\emph{Step 2:} We prove that $\abs{T}\geq \abs{I}^{1/2}c(\norm{\T})^{1/2}-\abs{I}^{-1/2}d(\norm{\T})^{1/2}$ on $\mathcal F_\T(\H)\ominus L^2(M)$.

For $\eta_1,\dots,\eta_m\in \H$ left-bounded and $\zeta_1,\dots,\zeta_m\in \F_\T(\H)$ let
\begin{align*}    X\sum_{k=1}^m\eta_k\otimes_\phi\zeta_k=\sum_{i\in I}\sum_{k=1}^m L_\phi(\xi_i)^\ast L_\phi(\eta_k)\zeta_k\otimes_\phi\xi_i.
\end{align*}
We have
\begin{align*}
    \left\lVert\sum_{i\in I}\sum_{k=1}^m L_\phi(\xi_i)^\ast L_\phi(\eta_k)\zeta_k\otimes_\phi\xi_i\right\rVert_{\F_\T(\H)\otimes_\phi \H}^2
    &=\sum_{i\in I}\left\lVert\sum_{k=1}^m L_\phi(\xi_i)^\ast L_\phi(\eta_k)\zeta_k\right\rVert_{\F_\T(\H)}^2\\
    &=\sum_{k,l=1}^m\left\langle \zeta_k, L_\phi(\eta_k)^\ast\sum_{i\in I}L_\phi(\xi_i) L_\phi(\xi_i)^\ast L_\phi(\eta_l)\zeta_l\right\rangle_{\F_\T(\H)}
\end{align*}
Since
\begin{equation*}
    \left(\sum_{i\in I}L_\phi(\xi_i) L_\phi(\xi_i)^\ast\right)^2=\sum_{i,j\in I}L_\phi(\xi_i)L_\phi(\xi_i)^\ast L_\phi(\xi_j) L_\phi(\xi_j)^\ast=\sum_{i\in I}L_\phi(\xi_i)L_\phi(\xi_i)^\ast,
\end{equation*}
we have $\sum_{i\in I}L_\phi(\xi_i) L_\phi(\xi_i)^\ast\leq 1$.

Therefore
\begin{align*}
     \left\lVert\sum_{i\in I}\sum_{k=1}^m L_\phi(\xi_i)^\ast L_\phi(\eta_k)\zeta_k\otimes_\phi\xi_i\right\rVert_{\F_\T(\H)\otimes_\phi \H}^2
    &\leq \sum_{k,l=1}^m\left\langle \zeta_k,L_\phi(\eta_k)^\ast L_\phi(\eta_l)\zeta_l\right\rangle_{\F_\T(\H)}\\
    &=\left\langle \sum_{k=1}^m \eta_k\otimes_\phi \zeta_k,\sum_{l=1}^m \eta_l\otimes_\phi\zeta_l\right\rangle_{\H\otimes_\phi\F_\T(\H)}.
\end{align*}
Thus $X$ extends to a contraction from $\H\otimes_\phi\F_\T(\H)$ to $\F_\T(\H)\otimes_\phi \H$. It follows from \Cref{lem:bound_change_twisted_norm} that $\tilde X=j_r X j_l^{-1}$ is a bounded linear operator on $\F_\T(\H)\ominus L^2(M)$ with $\norm{\tilde X}\leq c(\norm{\T})^{-1/2}d(\norm{\T})^{1/2}$.

For $\eta\in \F_\T(\H)$ let
\begin{equation*}
    Y\eta=\sum_{i\in I}e_i\otimes \xi_i\otimes_\phi\eta.
\end{equation*}
We have
\begin{align*}
    \norm{Y \eta}_{\ell^2(I)\otimes\H\otimes_\phi\F_\T(\H)}^2=\sum_{i\in I}\norm{\xi_i\otimes_\phi\eta}_{\H\otimes_\phi\F_\T(\H)}^2=\sum_{i\in I}\norm{\eta}_{\F_\T(\H)}^2=\abs{I}\norm{\eta}_{\F_\T(\H)}^2.
\end{align*}
Thus $Y$ is a bounded operator from $\F_\T(\H)$ to $\ell^2(I)\otimes \H\otimes_\phi\F_\T(\H)$ with norm $\abs{I}^{1/2}$. 
Therefore, another application of \Cref{lem:bound_change_twisted_norm} shows that $\tilde Z=Y^\ast (1\otimes j_l^{-1})$
defines a bounded linear operator from $\ell^2(I)\otimes(\F_\T(\H)\ominus L^2(M))$ to $\F_\T(\H)$ with norm $\norm{\tilde Z}\leq \abs{I}^{1/2}c(\norm{\T})^{-1/2}$.

A direct computation shows that $\tilde ZT=\abs{I}-\tilde X$ on $\F_\T(\H)\ominus L^2(M)$. Hence, whenever $\eta\in \F_\T(\H)\ominus L^2(M)$, then
\begin{align*}
    \abs{I}^{1/2}c(\norm{\T})^{-1/2}\norm{T\eta}\geq \norm{\tilde{Z} T\eta}=\norm{(\abs{I}-\tilde{X})\eta}\geq (\abs{I}-c(\norm{\T})^{-1/2}d(\norm{\T})^{1/2})\norm{\eta}.
\end{align*}
Therefore,
\begin{equation*}
    \abs{T}\geq c(\norm{\T})^{1/2}\abs{I}^{1/2}-d(\norm{\T})^{1/2}\abs{I}^{-1/2}.
\end{equation*}

\emph{Step 3:} We show that $\abs{S+T}\geq c(\norm{\T})^{1/2}\abs{I}^{1/2}-d(\norm{\T})^{1/2}\abs{I}^{-1/2}-2d(\norm{\T})^{1/2}$ on $\F_\T(\H)\ominus L^2(M)$.

If $\eta\in\F_\T(\H)\ominus L^2(M)$, then the inequalities from Step 1 and Step 2 combined imply
\begin{equation*}
    \norm{(S+T)\eta}\geq \norm{T\eta}-\norm{S\eta}\geq (c(\norm{\T})^{1/2}\abs{I}^{1/2}-d(\norm{\T})^{1/2}\abs{I}^{-1/2})\norm{\eta}-2d(\norm{\T})^{1/2}\norm{\eta}.
\end{equation*}

\emph{Step 4:} Let $\kappa=c(\norm{\T})^{1/2}\abs{I}^{1/2}-d(\norm{\T})^{1/2}\abs{I}^{-1/2}-2d(\norm{\T})^{1/2}$. We show that if $x$ is in the centralizer of $\hat\phi_\T$ and $\kappa\geq 0$, then
\begin{equation*}
    \sum_{i\in I}\norm{\F_\T(\J)x^\ast\F_\T(\J)\xi_i -x\xi_i}_{\F_\T(\H)}^2\geq \kappa^2\hat\phi_\T(\abs{x-E_\T(x)}^2).
\end{equation*}
Since $x$ is in the centralizer of $\hat\phi_\T$ and $E_\T$ is a weight-preserving conditional expectation, $E_\T(x)$ is also in the centralizer of $\hat\phi_\T$. Hence we can assume without loss of generality $E_\T(x)=0$. Moreover, the assumption $L_\phi(\xi_i)^\ast L_\phi(\xi_i)=1$ implies
\begin{equation*}
    \phi(1)=\phi(L_\phi(\xi_i)^\ast L_\phi(\xi_i))=\norm{\xi_i}_\H^2<\infty
\end{equation*}
by \Cref{lem:norm_left_bdd_vector}. Let $\Omega\in L^2_+(M)$ denote the cyclic and separating vector representing $\phi$. Since $x$ is in the centralizer of $\hat\phi_\T$, it follows from \Cref{thm:mod_theory_twisted} that $\F_\T(\J)x^\ast \Omega=x\Omega$.

Now we use $\F_\T(\J)x^\ast \F_\T(\J)$ commutes with $a_\T^\ast(\xi_i)+a_\T(\U_{i/2}\xi_i)$ and $x$ commutes with $b_\T^\ast(\xi_i)+b_\T(\U_{-i/2}\xi_i)$ for all $i\in I$ to deduce
\begin{align*}
    \F_\T(\J)x^\ast \F_\T(\J)\xi_i
    &=\F_\T(\J)x^\ast \F_\T(\J)(a_\T^\ast(\xi_i)+a_\T(\U_{i/2}\xi_i)\Omega\\
    &=(a_\T^\ast(\xi_i)+a_\T(\U_{i/2}\xi_i))\F_\T(\J)x^\ast\Omega\\
    &=(a_\T^\ast(\xi_i)+a_\T(\U_{i/2}\xi_i))x\Omega
\end{align*}
and
\begin{equation*}
    x\xi_i=x(b_\T^\ast(\xi_i)+b_\T(\U_{-i/2}\xi_i))(\xi_i)\Omega=(b_\T^\ast(\xi_i)+b_\T(\U_{-i/2}\xi_i))x\Omega.
\end{equation*}
Therefore,
\begin{equation*}
    \sum_{i\in I}\norm{\F_\T(\J)x\F_\T(\J)\xi_i-x\xi_i}_{\F_\T(\H)}^2=\norm{(S+T)x\Omega}_{\F_\T(\H)}^2,
\end{equation*}
and $x\Omega\in L^2(M)^\perp$ because $E_\T(x)=0$. Now the claim follows from Step 3.
\end{proof}

\begin{corollary}\label{cor:infinite-dim_factor}
    Under the assumptions of \Cref{prop:spectral_gap}, the center of the operator-valued twisted Araki--Woods algebra is given by
    \begin{equation*}
        \mathcal Z(\Gamma_\T(\H,\J,(\U_t)))=\{x\in \mathcal Z(M)\mid \xi x=x\xi\text{ for all }\xi\in \H\}.
    \end{equation*}
    In particular, if $M$ is a factor, then $\Gamma_\T(\H,\J,(\U_t))$ is a factor.
\end{corollary}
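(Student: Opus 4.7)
The plan is to deduce the corollary directly from \Cref{prop:spectral_gap}, combined with modular theory and a short density argument. First I would take $z\in \mathcal Z(\Gamma_\T(\H,\J,(\U_t)))$. Since the modular automorphism group of any normal semi-finite faithful weight fixes the center, and $\sigma^{\hat\phi_\T}_t=\mathrm{Ad}(\F_\T(\U_t))$ by \Cref{thm:mod_theory_twisted}(a), $z$ lies in the centralizer of $\hat\phi_\T$. Standard modular theory then yields $\F_\T(\J)z^\ast\F_\T(\J)\Omega = z\Omega$, where $\Omega\in L^2_+(M)$ is the cyclic vector representing $\phi$.

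The crucial step is to show $\F_\T(\J)z^\ast\F_\T(\J)\xi_i = z\xi_i$ for every $i\in I$, so that the left-hand side of the spectral gap inequality vanishes. Since $\xi_i\in\H_0$ lies in $\dom(\U_{-i/2})$, I would decompose $\xi_i=\eta_i+i\zeta_i$ with $\eta_i=\tfrac12(\xi_i+\J\U_{-i/2}\xi_i)$ and $\zeta_i=\tfrac1{2i}(\xi_i-\J\U_{-i/2}\xi_i)$; both vectors are left-bounded (as $\U_{-i/2}\xi_i$ is $(\phi,\phi)$-bounded by definition of $\H_0$) and satisfy the reality condition $\J\U_{-i/2}\eta_i=\eta_i$, $\J\U_{-i/2}\zeta_i=\zeta_i$, so $\xi_i = s_\T(\eta_i)\Omega+i\,s_\T(\zeta_i)\Omega$ (using spectral truncations when $s_\T$ is unbounded). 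Since $\F_\T(\J)z^\ast\F_\T(\J)=J_\T z^\ast J_\T$ belongs to $\Gamma_\T(\H,\J,(\U_t))^\prime$ it commutes with these field operators, and combining this with $J_\T z^\ast J_\T\Omega=z\Omega$ and the centrality of $z$ produces the claimed identity. Then \Cref{prop:spectral_gap} yields $\hat\phi_\T(\abs{z-E_\T(z)}^2)=0$, so $z=E_\T(z)\in M$ by faithfulness of $\hat\phi_\T$ (\Cref{prop:standard_form_Araki-Woods}), and hence $z\in\mathcal Z(M)$ because $M\subset\Gamma_\T(\H,\J,(\U_t))$.

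To finish the forward inclusion I would verify $z\xi=\xi z$ for every $\xi\in\H$ by applying $[z,s_\T(\xi)]=0$ to $\Omega$ for left-bounded $\xi\in\H$ with $\J\U_{-i/2}\xi=\xi$: the left side gives $z\xi$, while the right side, using $a_\T(\xi)\Omega=0$ and the canonical identification $\H\otimes_\phi L^2(M)\cong\H$, evaluates to $\xi z$; the commutation then extends to all of $\H$ by complex linearity and the density of $\H_\IR+i\H_\IR$ in $\H$. The reverse inclusion is routine: for $z\in\mathcal Z(M)$ with $z\xi=\xi z$ for all $\xi$, one moves $z$ across each relative tensor factor to see that $z$ commutes with every creation and annihilation operator on $\F_\T(\H)$, hence $z\in \Gamma_\T(\H,\J,(\U_t))^\prime$, and commutation with the left $M$-action is automatic. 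The factoriality statement is immediate, since $\mathcal Z(M)=\IC 1$ always commutes with every element of $\H$. The main obstacle will be the identity $J_\T z^\ast J_\T\xi_i=z\xi_i$, because the $\xi_i$ themselves need not satisfy the reality condition $\J\U_{-i/2}\xi_i=\xi_i$; the decomposition into real and imaginary parts above is precisely what circumvents this issue.
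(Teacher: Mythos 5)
Your proposal is correct and follows essentially the same route as the paper: pass from centrality to the centralizer of $\hat\phi_\T$ via \Cref{thm:mod_theory_twisted}, establish $\F_\T(\J)z^\ast\F_\T(\J)\xi=z\xi$ on $\H_0$ by commuting the relevant field operators past $z$ and $J_\T z^\ast J_\T$ and applying them to $\Omega$, invoke \Cref{prop:spectral_gap} to conclude $z=E_\T(z)\in M$, and finish with the routine converse inclusion. Your real/imaginary decomposition $\xi_i=\eta_i+i\zeta_i$ is just a rewriting of the operator $a_\T^\ast(\xi_i)+a_\T(\J\U_{-i/2}\xi_i)$ that the paper uses directly, so the two arguments coincide.
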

\begin{proof}
    If $x$ is in the center of $\Gamma_\T(\H,\J,(\U_t))$, then $x$ is in the centralizer of $\hat\phi_\T$. Moreover,
    \begin{align*}
        x\xi&=x (a_\T^\ast(\xi)+a_\T(\J\U_{-i/2}\xi))\Omega\\
        &=(a_\T^\ast(\xi)+a_\T(\J\U_{-i/2}\xi))x\Omega\\
        &=(a_\T^\ast(\xi)+a_\T(\J\U_{-i/2}\xi))\F_\T(\J)x^\ast\F_\T(\J) \Omega\\
        &=\F_\T(\J)x^\ast \F_\T(\J)\xi.
    \end{align*}
    for all $\xi\in \H_0$. Thus $x=E_\T(x)\in M$ by \Cref{prop:spectral_gap}. Hence $x\in \mathcal Z(M)$ and $x\xi=\xi x$ for all $\xi\in \H$ follows from the computation above by the density of $\H_0$ in $\H$.

    The converse inclusion can be shown exactly as in the proof of \Cref{prop:mixing_factor}.
\end{proof}

\section{Application to Quantum Markov Semigroups}\label{sec:applications}

Tomita correspondences occur naturally in the study of generators of GNS-symmetric quantum Markov semigroups and their relation to derivations \cite{Wir22b,Wir24b}. Operator-valued twisted Araki--Woods algebras enter the picture if one wants to obtain derivations with values in a von Neumann superalgebra instead of a bimodule.

In \cite{JRS} in the tracial case and in \cite[Section 7]{Wir22b} in the general case, operator-valued free Araki--Woods algebras were employed for this purpose. One disadvantage is that they are always infinite-dimensional (unless the Tomita correspondence is trivial), even if the base algebra is finite-dimensional. In this section we used a twisted algebra to show that one can always choose the von Neumann superalgebra to be finite-dimensional if the base algebra is a matrix algebra.

Moreover, we use the disintegration theory from \Cref{sec:disintegration_Tomita_corr} to introduce the notion of Bohr spectrum of a quantum Markov semigroup on a semi-finite von Neumann algebra in a way that it coincides with the set of Bohr frequencies occurring in Alicki's theoremm for quantum Markov semigroups on matrix algebras.

Let us briefly recall the relevant terminology regarding quantum Markov semigroups. A \emph{quantum Markov semigroup} on a von Neumann algebra $M$ is a family $(P_t)_{t\geq 0}$ of normal unital completely positive maps on $M$ such that $P_0=\mathrm{id}_M$, $P_sP_t=P_{s+t}$ for all $s,t\geq 0$ and $P_t\to 0$ as $t\to 0$ in the point-weak$^\ast$ topology.

If $\phi$ is a normal semi-finite faithful weight on $M$, then the quantum Markov semigroup $(P_t)$ is called \emph{GNS-symmetric with respect to $\phi$} if $\phi\circ P_t\leq \phi$ and
\begin{equation*}
    \phi(P_t(x)^\ast y)=\phi(x^\ast P_t(y))
\end{equation*}
for all $x,y\in\n_\phi$, $t\geq 0$.

A GNS-symmetric quantum Markov semigroup $(P_t)$ gives rise to a strongly continuous symmetric contraction semigroup $(T_t)$ on $L^2(M)$ characterized by $T_t(\Lambda_\phi(x))=\Lambda_\phi(P_t(x))$ for $x\in \n_\phi$, $t\geq 0$. By semigroup theory, $(T_t)$ is of the form $T_t=e^{-t\L}$ for some positive self-adjoint operator $\L$ on $L^2(M)$. By \cite[Theorem 6.3]{Wir22b}, the set
\begin{equation*}
    \mathfrak a_{\L}=\left\{x\in \bigcap_{z\in \mathbb C}\dom(\sigma^\phi_z): \sigma^\phi_z(x)\in\n_\phi\cap \n_\phi^\ast,\,\Lambda_\phi(\sigma^\phi_z(x))\in \dom(\L^{1/2})\text{ for all }z\in \IC\right\}
\end{equation*}
is a weak$^\ast$ dense $\ast$-subalgebra of $M$ such that $\Lambda_\phi(\mathfrak a_\L)$ is a core for $\mathcal L^{1/2}$.

Following the terminology introduced in the tracial case by Junge, Li and LaRacuente in \cite{LJL20} (see also \cite{BGJ22}), a \emph{derivation triple} for $(P_t)$ is a a triple $(M\subset \hat M,E,\delta)$ consisting of 
\begin{itemize}
    \item a unital inclusion $M\subset \hat M$ of von Neumann algebras,
    \item a faithful normal conditional expectation $E\colon \hat M\to M$, and
    \item a closed operator $\delta\colon \dom(\L^{1/2})\to L^2(\hat M)$ satisfying $\delta\circ J_\phi=J_{\phi\circ E}\circ \delta$, $\delta\circ \Delta_\phi^{it}=\Delta_{\hat\phi}^{it}\circ\delta$ for all $t\in \IR$, and
\begin{equation*}
    \delta(\Lambda_\phi(x)y)=\delta(x\Lambda_\phi^\prime(y))=x\delta(\Lambda_\phi^\prime(y))+\delta(\Lambda_\phi(x))y
\end{equation*}
for $x,y\in \mathfrak a_\L$
\end{itemize}
such that $\L=\delta^\ast \delta$.

It was shown in \cite[Corollary 7.6]{Wir22b} that a GNS-symmetric quantum Markov semigroup admits a derivation triple if and only if it satisfies a property called $\Gamma$-regularity. Every GNS-symmetric quantum Markov semigroup on a finite-dimensional von Neumann algebra or more generally every GNS-symmetric quantum Markov semigroup with bounded generator is $\Gamma$-regular.

\begin{proposition}
If $M$ is a type I factor, $\phi$ a normal semifinite faithful weight on $M$ and $(P_t)$ a quantum Markov semigroup that is GNS-symmetric with respect to $\phi$ and $\Gamma$-regular, then $(P_t)$ admits a derivation triple $(M\subset \hat M,E,\delta)$ with $\hat M=M\overline\otimes \Gamma_T(H,J,(U_t))$ and $E=\mathrm{id}\otimes \psi$, where $\psi$ is the vacuum state on $\Gamma_T(H,J,(U_t))$. Moreover, if $M$ is finite, then $H$ can be chosen finite-dimensional.
\end{proposition}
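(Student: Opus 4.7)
The plan is to combine three results: the existence of a derivation triple for $\Gamma$-regular GNS-symmetric QMS from \cite{Wir22b}, the disintegration theorem \Cref{thm:Tomita_corr_type_I} for Tomita correspondences over type I factors, and the tensor product decomposition \Cref{thm:twisted_Araki-Woods_type_I}. First I would invoke \cite[Corollary 7.6]{Wir22b}: by $\Gamma$-regularity, $(P_t)$ admits a derivation triple $(M\subset \Gamma_0(\H,\J,(\U_t)),E_0,\delta_0)$ for some Tomita correspondence $(\H,\J,(\U_t))$ over $(M,\phi)$, where $\Gamma_0(\H,\J,(\U_t))$ is the operator-valued free Araki--Woods algebra. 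As observed in Subsection \ref{sec:Shlyakhtenko}, this is exactly the $T=0$ case of our construction. Without loss of generality one may assume $\H$ is separable (pass to the smallest $\J$- and $\U_t$-invariant subcorrespondence containing $\delta_0(\Lambda_\phi(\mathfrak a_{\L}))$).

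Since $M$ is a type I factor, \Cref{thm:Tomita_corr_type_I} produces a Hilbert space $H$, an anti-unitary involution $J$ on $H$, a strongly continuous unitary group $(U_t)$ with $[J,U_t]=0$, and a unitary bimodule map $V\colon \H\to L^2(M)\otimes H$ intertwining $\J$ with $J_\phi\otimes J$ and $\U_t$ with $\Delta_\phi^{it}\otimes U_t$. Applying \Cref{thm:twisted_Araki-Woods_type_I} with $T=0$ yields an isomorphism of operator-valued $W^\ast$-probability spaces
\begin{equation*}
    (M\subset \Gamma_0(\H,\J,(\U_t)),E_0)\cong (M\subset M\overline\otimes \Gamma_0(H,J,(U_t)),\mathrm{id}\otimes\psi).
\end{equation*}
Transporting $\delta_0$ through the induced unitary between the corresponding $L^2$-spaces, and noting that all intertwining relations for $J_\phi$, $J_{\phi\circ E_0}$, $\Delta_\phi^{it}$ and $\Delta_{\hat\phi_{0}}^{it}$ are preserved because $V$ is an isomorphism of Tomita correspondences, yields the desired derivation triple $(M\subset M\overline\otimes \Gamma_0(H,J,(U_t)),\mathrm{id}\otimes\psi,\delta)$, with $T=0$.

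For the ``moreover'' statement, assume $M=M_n(\IC)$. Then $\L$ is bounded, and a GNS-symmetric variant of Alicki's theorem writes it in GKLS form using finitely many $\sigma^\phi$-eigenvectors $V_1,\dots,V_k\in M$. Re-examining the construction of the Tomita correspondence in \cite[Section 7]{Wir22b} with these operators as input, one can take $\H$ to be the finite-dimensional closed bimodule generated by the vectors corresponding to the $V_j$'s together with their $\J$- and $\U_t$-translates; this is automatically $\J$- and $\U_t$-invariant and still satisfies $\L=\delta_0^\ast\delta_0$ because the $V_j$'s already encode all of $\L$. Since $\H$ and $L^2(M)$ are then both finite-dimensional, the disintegration of \Cref{thm:Tomita_corr_type_I} produces a finite-dimensional $H$.

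The main obstacle is this last step: extracting from a GNS-symmetric Alicki/GKLS decomposition a Tomita correspondence that is simultaneously finite-dimensional, closed under the bimodule action, invariant under $\J$ and $(\U_t)$, and still realizes the generator as $\delta_0^\ast\delta_0$. This is essentially a matter of tracking modular eigenvalues and pairing each $V_j$ with its $\sigma^\phi$-conjugate to obtain $\J$-invariance; no new machinery beyond the preceding sections is needed, but the bookkeeping deserves care.
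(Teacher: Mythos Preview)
Your proposal is correct and follows essentially the same route as the paper for the main statement: start from an existing derivation triple, restrict to the Tomita subcorrespondence generated by the range of the derivation, apply \Cref{thm:Tomita_corr_type_I} to obtain the tensor decomposition $\H\cong L^2(M)\otimes H$, and then identify the ambient algebra via \Cref{thm:twisted_Araki-Woods_type_I}. The paper is terser but the content is the same.

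For the ``moreover'' clause your argument via Alicki's theorem works but is unnecessarily heavy. The paper's observation is much simpler: once you have chosen $\H$ to be the closed bimodule span of $\{x\,\partial(a)\,y\mid x,y\in M,\,a\in\dom(\L^{1/2})\}$, finite-dimensionality of $\H$ is automatic when $M=M_n(\IC)$. Indeed $\dom(\L^{1/2})=L^2(M)$ is finite-dimensional, so the range of $\partial$ is finite-dimensional, and acting on the left and right by the finite-dimensional algebra $M$ keeps the span finite-dimensional. Since $\H\cong L^2(M)\otimes H$ with $L^2(M)$ finite-dimensional and nonzero, $H$ is finite-dimensional as well. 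No GKLS decomposition, eigenvector pairing, or modular bookkeeping is required; the $\J$- and $\U_t$-invariance of $\H$ is already guaranteed by the derivation-triple axioms $\delta\circ J_\phi=J_{\phi\circ E}\circ\delta$ and $\delta\circ\Delta_\phi^{it}=\Delta_{\hat\phi}^{it}\circ\delta$.
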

\begin{proof}
    Let $(M\subset N,F,\partial)$ be any derivation triple for $(P_t)$ and let $\H$ be the closed linear hull of $\{x\partial(a)y\mid x,y\in M,\,a\in\dom(\L^{1/2})\}$. By definition of a derivation triple, $J_{\phi\circ F}$ and $\Delta^{it}_{\phi\circ F}$ leave $\H$ invariant, making it into a Tomita correspondence $(\H,\J,(\U_t))$ over $(M,\phi)$. Moreover, if $M$ is finite-dimensional, so is $\H$.

    By \Cref{thm:Tomita_corr_type_I} we can write $\H=L^2(M)\otimes H$, $\J=J_\phi\otimes J$, $\U_t=\Delta_\phi^{it}\otimes U_t$. Taking $\delta=\partial$ viewed as map into $L^2(M\overline\otimes \Gamma_\T(H,J,(U_t))$ and $\hat M$, $E$ as in the statement of the theorem does the job.
\end{proof}

\begin{corollary}
Every GNS-symmetric quantum Markov semigroup $(P_t)$ on $M_n(\IC)$ admits a derivation triple $(M\subset\hat M,E,\delta)$ with $\hat M$ finite-dimensional.
\end{corollary}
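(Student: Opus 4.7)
The plan is to apply the preceding proposition to obtain a derivation triple over a (possibly infinite-dimensional) tensor product, and then replace the infinite-dimensional factor by a finite-dimensional matrix algebra carrying the same Tomita data on the one-particle subspace.

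Since $M_n(\IC)$ is a finite type I factor and every quantum Markov semigroup on a finite-dimensional von Neumann algebra has bounded generator, hence is $\Gamma$-regular, the preceding proposition yields a derivation triple $(M_n(\IC)\subset \hat M_0,E_0,\delta_0)$ with $\hat M_0=M_n(\IC)\overline\otimes \Gamma_T(H,J,(U_t))$ for a finite-dimensional Hilbert space $H$ equipped with an anti-unitary involution $J$ and a strongly continuous one-parameter unitary group $(U_t)=(e^{itA})$ commuting with $J$. The derivation $\delta_0$ takes values in the one-particle subspace $\H:=L^2(M_n(\IC))\otimes H\subset L^2(\hat M_0)$, and the defining properties of a derivation triple involve only the restriction of the Tomita data on $L^2(\hat M_0)$ to $\H$.

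It therefore suffices to construct a finite-dimensional von Neumann algebra $N$ with a faithful normal state $\omega$ together with an isometric embedding $H\hookrightarrow L^2(N,\omega)$ whose image is invariant under $J_\omega$ and each $\Delta_\omega^{it}$, and on which these operators restrict to $J$ and $U_t$. Given such $(N,\omega)$, I would take $\hat M:=M_n(\IC)\otimes N$ and $E:=\id\otimes \omega$. The induced bimodule isometry $\H=L^2(M_n(\IC))\otimes H\hookrightarrow L^2(M_n(\IC))\otimes L^2(N)=L^2(\hat M)$ intertwines the Tomita structures since $J_{\phi\circ E}=J_\phi\otimes J_\omega$ and $\Delta_{\phi\circ E}^{it}=\Delta_\phi^{it}\otimes \Delta_\omega^{it}$, so pushing $\delta_0$ through this embedding produces a derivation $\delta$ into $L^2(\hat M)$ that inherits the Leibniz identity, the intertwinings with $J_\phi$ and $\Delta_\phi^{it}$, and the relation $\L=\delta^\ast\delta$. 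The resulting triple $(M_n(\IC)\subset \hat M,E,\delta)$ is finite-dimensional as required.

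To construct $(N,\omega)$, let $\rho:=U_{-i}=e^A$, a positive invertible operator on $H$ satisfying $J\rho J=\rho^{-1}$. Take $K:=H\oplus \IC$, $N:=B(K)$, and let $\omega$ be the faithful state on $N$ with density matrix $\sigma:=\operatorname{diag}(\rho,1)$ (suitably normalized). Fixing a unit vector $w_0\in \IC\subset K$, the map
\[
\iota\colon H\to L^2(N,\omega)=\HS(K),\qquad \iota(v):=\tfrac 1{\sqrt 2}\bigl(\lvert v\rangle\langle w_0\rvert+\lvert w_0\rangle\langle Jv\rvert\bigr)
\]
is isometric, and a short computation using $\sigma w_0=w_0$, $\sigma|_H=\rho$ and $J\rho=\rho^{-1}J$ yields $J_\omega\iota(v)=\iota(Jv)$ and $\Delta_\omega\iota(v)=\iota(\rho v)$, so $J_\omega$ and $\Delta_\omega^{it}$ restrict on the image to $J$ and $U_t$ respectively. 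The main step is thus the verification of these properties of $\iota$, which is entirely elementary; the rest of the argument is a formal transfer of the data from the preceding proposition, and presents no conceptual obstacle.
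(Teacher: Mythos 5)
Your proof is correct, but it takes a genuinely different route from the paper's. The paper simply chooses the twist $T$ in the preceding proposition to be minus the flip, so that $\Gamma_T(H,J,(U_t))$ sits inside $B(\F_{-1}(H))$ with $\F_{-1}(H)$ the fermionic Fock space over the finite-dimensional $H$; the superalgebra $M_n(\IC)\otimes\Gamma_T(H,J,(U_t))$ is then finite-dimensional for free, and the whole argument stays inside the twisted Araki--Woods framework of the paper. You instead bypass second quantization entirely: you only use that the Tomita correspondence of the semigroup is $L^2(M_n(\IC))\otimes H$ with $\J=J_\phi\otimes J$, $\U_t=\Delta_\phi^{it}\otimes U_t$ (\Cref{thm:Tomita_corr_type_I}), and you embed the one-particle data $(H,J,U_t)$ equivariantly into the standard form of $N=B(H\oplus\IC)$ with the state of density $\operatorname{diag}(e^{A},1)$; your verification that $J_\omega\iota(v)=\iota(Jv)$ and $\Delta_\omega\iota(v)=\iota(e^{A}v)$ (using $Je^{A}J=e^{-A}$ and $\sigma\xi\sigma^{-1}$ as the modular action on Hilbert--Schmidt operators) is correct, and since $\id\otimes\iota$ is an isometric bimodule map intertwining the modular data, the transferred $\delta$ inherits closedness, the Leibniz identity and $\L=\delta^\ast\delta$. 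What your approach buys is elementarity — no Fock bimodule, no locality of the anticommutation twist, and in fact no need for the full preceding proposition (any derivation triple plus \Cref{thm:Tomita_corr_type_I} suffices); what it gives up is that the resulting $\hat M=M_n(\IC)\otimes M_{\dim H+1}(\IC)$ is no longer exhibited as an operator-valued twisted Araki--Woods algebra, which is the structural point the paper is making in this section.
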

\begin{proof}
If one takes $T$ to be minus the flip map in the previous theorem, then $\Gamma_T(H,J,(U_t))$ is contained in the bounded linear operators on the fermionic Fock space over $H$. In particular, if $H$ is finite-dimensional, so is $\Gamma_T(H,J,(U_t))$ and then $M_n(\IC)\otimes \Gamma_T(H,J,(U_t))$ is also finite dimensional.
\end{proof}

As the previous results show, derivation triples for a given quantum Markov semigroup are not unique, even if one makes the natural minimality assumption that the range of $\delta$ generate $L^2(\hat M)$ as a correspondence. What is unique (up to isomorphism) however is the Tomita correspondence $(\H,\J,(\U_t))$ generated by the range of $\delta$ with the restriction of the modular data from $\hat M$ \cite[Theorem 6.9]{Wir22b}.

In particular, if $M$ is semi-finite with normal faithful trace $\tau$ and $\phi=\tau(h^{1/2}\,\cdot\,h^{1/2})$, then the spectrum of $\U_t(h^{-it}\,\cdot\, h^{it})$ is an invariant of $(P_t)$. This leads us to the following definition.

\begin{definition}
If $(P_t)$ is a GNS-symmetric quantum Markov semigroup and $(\H,\J,(\U_t))$ the associated Tomita correspondence, then the \emph{Bohr spectrum} of $(P_t)$ is the set of all $\omega\in\IR$ for which the operator $\U_t(h^{-it}\,\cdot\,h^{it})-e^{i\omega t}$ on $\H$ is not invertible for some (or all) $t\in\IR$.
\end{definition}

To give an interpretation of the Bohr spectrum, we show how it is related to the Bohr frequencies occurring in Alicki's theorem on generators of GNS-symmetric quantum Markov semigroups on $M_n(\IC)$.

\begin{proposition}
If $h\in M_n(\IC)$ is positive definite and $P_t=e^{-t\L}$ with $\L$ given by
\begin{equation*}
    \L\colon M_n(\IC)\to M_n(\IC),\,\L(x)=\sum_{j=1}^d e^{-\omega_j/2}(v_j^\ast[v_j,x]-[v_j^\ast,x]v_j)
\end{equation*}
with $v_j\in M_n(\IC)$ non-zero and $\omega_j\in \IR$ such that $\{v_j\mid 1\leq j\leq d\}=\{v_j^\ast\mid 1\leq j\leq d\}$ and $hv_j h^{-1}=e^{-\omega_j}v_j$, then $(P_t)$ is a quantum Markov semigroup, which is GNS-symmetric with respect to $\mathrm{tr}(\,\cdot\,h)$, and the Bohr spectrum of $(P_t)$ is $\{\omega_j\mid 1\leq j\leq d\}$.
\end{proposition}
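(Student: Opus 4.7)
The plan is to prove the statement in three steps.

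\emph{Step 1: Quantum Markov semigroup and GNS-symmetry.} First I expand the commutators to get
\begin{equation*}
\L(x)=\sum_{j=1}^{d}e^{-\omega_j/2}\bigl(v_j^\ast v_j x+xv_j^\ast v_j-2v_j^\ast xv_j\bigr),
\end{equation*}
recognizing $-\L$ as being in the standard GKSL form with Kraus operators $\sqrt{2}\,e^{-\omega_j/4}v_j$. This gives complete positivity of $(P_t)=(e^{-t\L})$, and unitality follows from $\L(1)=0$. To prove GNS-symmetry I will show that $\L$ is self-adjoint on $L^2(M_n(\IC),\phi)$: writing $\L=\{K,\,\cdot\,\}-2\Phi$ with $K=\sum_j e^{-\omega_j/2}v_j^\ast v_j$ and $\Phi(x)=\sum_j e^{-\omega_j/2}v_j^\ast xv_j$, the eigenvalue relation yields $[h,v_j^\ast v_j]=0$ and hence $[h,K]=0$, so the anticommutator part is self-adjoint, while $\Phi$ is self-adjoint by cyclicity of the trace combined with the pairing $\{v_j\}=\{v_j^\ast\}$ and $\omega_{j'}=-\omega_j$.

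\emph{Step 2: Constructing a Tomita correspondence and a derivation triple.} I will set
\begin{equation*}
\H=\bigoplus_{j=1}^{d}L^2(M_n(\IC),\phi)
\end{equation*}
with the standard bimodule structure on each summand, take $\U_t$ to act on the $j$-th summand as $e^{i\omega_j t}\Delta_\phi^{it}$, and take $\J$ to be $J_\phi$ followed by the permutation of summands induced by $j\mapsto j'$ with $v_{j'}=v_j^\ast$. The compatibility conditions of \Cref{Def:Tomita correspondence} reduce to $J_\phi\Delta_\phi^{it}=\Delta_\phi^{it}J_\phi$ together with $\omega_{j'}=-\omega_j$. I will then define
\begin{equation*}
\delta\colon \Lambda_\phi(M_n(\IC))\to \H,\qquad \delta(\Lambda_\phi(x))_j=e^{-\omega_j/4}\Lambda_\phi([v_j,x]).
\end{equation*}
Computing $\delta^\ast$ from the defining pairing, using cyclicity of the trace and the relations $v_j h=e^{\omega_j}hv_j$, I obtain
\begin{equation*}
\delta^\ast\delta(\Lambda_\phi(x))=\sum_{j}\bigl(e^{-\omega_j/2}v_j^\ast[v_j,x]-e^{\omega_j/2}[v_j,x]v_j^\ast\bigr),
\end{equation*}
and the reindexing $j\mapsto j'$ in the second sum converts it into $-\sum_j e^{-\omega_j/2}[v_j^\ast,x]v_j$, giving $\delta^\ast\delta=\L$. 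To package $\delta$ into a bona fide derivation triple I will take $\hat M=\Gamma_0(\H,\J,(\U_t))$ as constructed in Section \ref{sec:def_and_mod_theory}; by \Cref{thm:mod_theory_twisted} the modular data of $(\hat M,\hat\phi_0)$ restrict to $\J$ and $(\U_t)$ on $\H$, and the semicircular realization supplies $\delta$ with the required bimodule-Leibniz property.

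\emph{Step 3: Computing the Bohr spectrum.} By the invariance of the Tomita correspondence under change of derivation triple, the correspondence associated with $(P_t)$ is unitarily bimodule isomorphic to the closed subbimodule $\tilde\H\subset\H$ generated by $\mathrm{ran}(\delta)$. On $\H$, for $\xi$ in the $j$-th summand
\begin{equation*}
\V_t\xi=\U_t(h^{-it}\xi h^{it})=e^{i\omega_j t}\Delta_\phi^{it}(h^{-it}\xi h^{it})=e^{i\omega_j t}\xi,
\end{equation*}
so $\V_t$ is diagonalized with eigenvalues $\{e^{i\omega_j t}\}$. The spectral projection of $\V_t$ onto each eigenvalue commutes with the bimodule action and hence preserves $\tilde\H$. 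Scalar $v_j\in\IC\cdot 1$ may be discarded since they force $\omega_j=0$ and contribute zero to both $\L$ and $\delta$; after this harmless reduction each remaining $v_j$ is non-scalar, so $\delta_j\ne 0$ and the projection of $\tilde\H$ onto the $\omega_j$-eigenspace is non-trivial. Hence every $\omega_j$ lies in the spectrum of the generator of $\V_t|_{\tilde\H}$, and conversely this spectrum is contained in $\{\omega_j\}$ since $\V_t$ on $\H$ is. Therefore the Bohr spectrum of $(P_t)$ equals $\{\omega_j:1\le j\le d\}$.

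The hard part will be the exact normalization $e^{-\omega_j/4}$ in the definition of $\delta$, which must be chosen precisely so that $\delta^\ast\delta=\L$ holds on the nose, together with the careful bookkeeping of the re-indexing $j\mapsto j'$ under the pairing $\{v_j\}=\{v_j^\ast\}$ both in the proof of self-adjointness of $\Phi$ and in the computation of $\delta^\ast\delta$; everything else is a direct unpacking of the definitions.
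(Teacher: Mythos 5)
Your proposal is correct and follows essentially the same route as the paper: the paper simply cites \cite[Section 8.2]{Wir22b} for the identification of the associated Tomita correspondence as $\mathrm{HS}(\IC^n)\otimes\IC^d$ with $\U_t=\Delta_\phi^{it}\otimes U_t$, $(U_t\xi)_j=e^{i\omega_jt}\xi_j$ and $(\J\xi)_j=\overline{\xi_{j^\ast}}$, and then reads off $\V_t=1\otimes U_t$ exactly as in your Step 3; your Steps 1 and 2 reconstruct the content of that citation (the GKSL form, the relations $[h,v_j^\ast v_j]=0$, $\omega_{j^\ast}=-\omega_j$, and the normalization $e^{-\omega_j/4}$ giving $\delta^\ast\delta=\L$ all check out). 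One caveat: your ``harmless'' discarding of scalar $v_j$ is not quite harmless for the statement as written --- if some $v_j\in\IC 1\setminus\{0\}$ then $\omega_j=0$, the $j$-th component of $\delta$ vanishes, and $0$ belongs to the Bohr spectrum only if some non-scalar $v_k$ also has $\omega_k=0$; so your argument in fact identifies the Bohr spectrum as $\{\omega_j : v_j\notin\IC 1\}$, which agrees with the claimed set under the implicit (Alicki-type) assumption that the $v_j$ are non-scalar, a degeneracy the paper's proof glosses over as well.
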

\begin{proof}
For $j\in \{1,\dots,d\}$ let $j^\ast$ denote the unique index in $\{1,\dots,d\}$ such that $v_{j^\ast}=v_j^\ast$. We write $\phi$ for $\mathrm{tr}(h\,\cdot\,)$. As shown in \cite[Section 8.2]{Wir22b}, the Tomita correspondence $(\H,\J,(\U_t))$ associated with $(P_t)$ is given by $\H=\mathrm{HS}(\IC^n)\otimes \IC^d$, $\J=J_\phi\otimes J$ with $(\J \xi)_j=\overline{\xi_{j^\ast}}$, and $\U_t=\Delta_\phi^{it}\otimes U_t$ with  $(U_t \xi)_j=e^{i\omega_j t}\xi_j$. Thus $\V_t=1\otimes U_t$ and the Bohr spectrum of $(P_t)$ is $\{\omega_j\mid 1\leq j\leq d\}$.
\end{proof}

\begin{remark}
By Alicki's theorem (see \cite[Theorem 3]{Ali76}, \cite[Theorem 3.1]{CM17}), every GNS-symmetric quantum Markov semigroup on $M_n(\IC)$ is of the form given in the previous theorem. In this case, the Bohr spectrum is always contained in the spectrum of $\log \Delta_\phi$. It is an interesting question whether this is true for the Bohr spectrum of GNS-symmetric quantum Markov semigroups on an arbitrary semi-finite von Neumann algebras.
\end{remark}


\printbibliography
\end{document}